\newtheorem{theorem}{Theorem}
\newtheorem{remark}[theorem]{Remark}
\newtheorem{lemma}[theorem]{Lemma}
\newtheorem{definition}[theorem]{Definition}
\newcommand {\p}{\partial}
\numberwithin{equation}{section}
\numberwithin{theorem}{section}
\begin{document}
\title[The homogeneous k-Hessian equation]{The exterior Dirichlet problem for homogeneous $k$-Hessian equation}
\author{Xi-Nan Ma} 
\address{School of Mathematical Sciences, University of Science and Technology of China, Hefei 230026,
	Anhui Province, China}
\email{xinan@ustc.edu.cn}

\author{Dekai Zhang}  
\address{Department of Mathematics, Shanghai University, Shanghai, 200444, China}
\email{dkzhang@shu.edu.cn}

\begin{abstract}
We study the exterior Dirichlet problem for homogeneous $k$-Hessian equation. The prescribed asymptotic behavior at infinity of the solution is zero if $k<\frac{n}{2}$, it is $\log|x|+O(1)$ if $k=\frac{n}{2}$ and  it is $|x|^{\frac{2k-n}{n}}+O(1)$ if $k>\frac{n}{2}$. By constructing smooth  solutions of approximating non-degenerate $k$-Hessian equations with uniform $C^{1,1}$-estimates, we prove the existence part. The uniqueness follows from the comparison theorem and thus  the  $C^{1,1}$- regularity of the solution of the homogeneous $k$-Hessian equation in the exterior domain is proved. We also prove a uniform positive lower bound of the gradient. As an implication of the $C^{1,1}$-estimates, we derive an almost monotonicity formula along the level set of the approximating solution. In particular, we get an weighted geometric inequality which is a natural generalization of the $k=1$ case.

\end{abstract}
\maketitle.
\tableofcontents 

\section{Introduction}
Let $u$ be a $C^2$ function and $\lambda=(\lambda_1,\cdots,\lambda_n)$ be the eigenvalues of $D^2u$, the k-Hessian operator is defined by
\begin{align}
	S_{k}(D^2u):=S_k(\lambda)=\sum\limits_{1\le i_1<\cdots i_k\le n}\lambda_{i_1}\cdots \lambda_{i_k},
	\end{align}
where $1\le k\le n$.
When $k=1$, $S_1(D^2 u)=\Delta u$. When $k=n$, $S_n(D^2 u)=\det D^2 u$.

Let $\Omega$ be a bounded smooth domain in $\mathbb{R}^n$, the Dirichlet problem for the $k$-Hessian equation is as follows
\begin{align}\label{khessian}
	\left\{\begin{aligned}
		S_k(D^2 u) =&f \qquad\text{in} \quad \Omega,\\
		u=&\varphi \qquad\text{on} \quad \partial\Omega,
		\end{aligned}
	\right.
	\end{align}
where $f$ and $\varphi$ are given smooth functions.
When $k=1$, the $k$-Hessian equation is the Poisson equation.
When $k=n$, it is the well known Monge-Amp\`ere equation.
\subsection{Some known results}
We briefly give some known results of the Dirichlet problem for the $k$-Hessian equation in the  nondegenerate case i.e. $f>0$ and in the degenerate cases i.e. $f\ge 0$. In general, the $k$-Hessian equation is a fully nonlinear equation.
\subsubsection{Results on bounded domains }
If $f>0$,
Caffarelli-Nirenberg-Spruck \cite{CNSIII} solved \eqref{khessian} in a bounded $(k-1)$-convex domain.  Guan \cite{guan1994cpde} solved \eqref{khessian} by only assuming the existence of a subsolution. The advantage of Guan's result is that there are no geometric restriction on the domain.

The Dirichlet problem in bounded domains of  degenerate fully nonlinear equations has been studied extensively.
For the Dirichlet problem of degenerate Monge-Amp\`ere equation in bounded convex domain, Caffarelli-Nirenberg-Spruck \cite{cns1986rmi} show the $C^{1,1}$ regularity for the homogeneous case i.e. $f\equiv 0$.
If  $f$  satisfies $f^{\frac{1}{n-1}}\in C^{1,1}$, Guan-Trudinger-Wang \cite{guantrudingerwang1999acta} proved the $C^{1,1}$ regularity, which is optimal by Wang's counterexample \cite{wang1995pams}.
The $C^{1,1}$ regularity problem of degenerate k-Hessian equation with Dirichlet boundary value in bounded $(k-1)$-convex domain was solved by Krylov\cite{krylov1989, krylov1994} and Ivochina-Trudinger-Wang\cite{itw2004cpde} (PDE's proof) with the assumption $f^{\frac{1}{k}}\in C^{1,1}$.
Dong \cite{dong2006cpde} studied the mixed Hessian equations.
\subsubsection{Results on unbounded domains }
The exterior Dirichlet problem for viscosity solutions of nondegenerate  fully nonlinear  equations has been studied extensively. The $C^0$ viscosity solution  for the  Monge-Amp\`ere equation: $\det D^2 u=1$ with  prescribed asymptotic behavior at infinity was solved by Caffarelli-Li\cite{cl2003cpam}.
 The related problem for the $k$-Hessian equation : $S_k(D^2 u)=1$ was proved by Bao-Li-Li \cite{baolili2014tams}. For the related results  on  other type nondegenerate fully nonlinear equations, one can see \cite{baoli2013, lili2018jde, libao2014jde, li2019tams}. Note that in these cases the regularity are only continuous.

 Li-Wang\cite{liwang2015dcds}
  proved the global $C^{k+2,\alpha}$ regularity of the homogeneous Monge-Amp\`ere equation: $\det(u_{ij})=0$
 in a strip region: $\mathbb R^{n-1}\times[0,1]$  by assuming that the boundary functions are locally uniformly convex and $C^{k,\alpha}$. Moreover, they gave a counterexample to show the necessity of the uniform convexity of the boundary functions.
\subsection{Motivation}
The motivation of this paper arises from  proving geometric inequality by establishing certain monotonicity formula on the level set of solutions in exterior domains. Another one comes from studying the regularity of extremal function of the complex Monge-Amp\`ere operator.
\subsubsection{Geometric inequalities}
One motivation for us to consider the exterior Dirichlet problem for the homogeneous $k$-Hessian equation comes from the following geometric inequalities:
\begin{align}\label{AF0718}
	\left(\frac{V_{n-l}(\Omega)}{V_{n-1-l}(B)}\right)^{\frac{l}{n-l}}\le\left(\frac{V_{n-k}(\Omega)}{V_{n-1-k}(B)}\right)^{\frac{l}{n-k}},
\end{align}
where $0\le l<k\le n$, $V_{n-k}(\Omega)=\int_{\partial\Omega}H_{k-1}(\kappa)dA$, $V_{-1}:=|\Omega|$ and $H_k$ is the $k$-Hessian operator of the principal curvature $\kappa=(\kappa_1,\cdots,\kappa_{n-1})$ of $\p\Omega$. $\eqref{AF0718}$ are called  Alexandrov-Fenchel inequalities. An open question is whether  \eqref{AF0718} holds for  $(k-1)$-convex domain $\Omega$ i.e. $H_m>0$ for $1\le m\le k-1$.

When $\Omega$ is $(k-1)$-convex and starshaped,
Guan-Li\cite{guanli2009adv} proved \eqref{AF0718} by the method of inverse curvature flows. If $\Omega$ is $k$-convex, Chang-Wang\cite{cw2013adv}, Qiu\cite{q2015ccm} proved the above inequalities when $l=0$ by the optimal transport method.

Very recently, by considering the exterior Dirichlet problem of the Laplace equation, Agostiniani-Mazzieri \cite{AM2020CVPDE} proved several geometric inequalities such as the  Willmore inequality. By studying the the exterior Dirichlet problem of the $p$-Laplacian equation,
Fogagnolo and Mazzieri and Pinamonti \cite{fmp2019ihp} showed the volumetric Minkowski inequality i.e. the Alexandrov-Fenchel inequality with $l=0$ and $k=2$ for smooth convex domains. Later, Agostiniani-Fogagnolo-Mazzieri  \cite{afm2022arma} removed the convexity assumption for the domain. The key point for them is to prove a monotonicity formula along the level set of the solution of the exterior Dirichlet problem for the $p$-Laplace equation.
%

 \subsubsection{Regularity problems of extremal functions}
  P. F. Guan.
  \cite{gpf2002am, gpf2010} proved the $C^{1,1}$ regularity of the homogeneous complex Monge-Amp\`ere equation in $U:=V_0\setminus V$ with $V=\cup_{i=1}^N V_{i}$, where  $V_0$ and $ V_i$ are strongly pseudoconvex and bounded smooth domains in a complex manifold ${M}^n$, $V$ is holomorphically convex subset of $\Omega_1$. Then he solved a conjecture of Chern-Levine-Nirenberg on the extended intrinsic norms.
   B. Guan \cite{gb2007imrn} proved the $C^{1,1}$ regularity of solutions of the exterior Dirichlet problem for the homogeneous complex Monge-Amp\`ere
   equation in $\mathbb{C}^{n}\setminus\bar V$ with $V={ (\cup_{i=1}^N V_i)}$, where $V_i$ are strongly pseudoconvex and bounded smooth domains and $V$ is a  holomorphically convex subset of $V_0$.
   If $V$ is strictly convex and smooth (analytic), the smooth (analytic) regularity of this problem was proved by Lempert \cite{lempert1985duke}
\subsection{Our main results}
In this paper, we consider the following exterior Dirichlet problem for the k-Hessian equation. For convenience,  we always assume $0\in \Omega$ and  there exists positive constants $r_0, {R_0}$ such that $B_{r_0}\subset\subset\Omega\subset B_{\frac {R_0} {2}}$, where $B_r$ and $B_{\frac {R_0} {2}}$ are balls centered at $0$ with radius $r$ and $\frac{R_0}{2}$ respectively.

\subsubsection{\emph{\textbf{Case1:}} $1\le k<\frac{n}{2}$}
Since the Green function in this case is $-|x|^{\frac{2k-n}{k}}$, we consider the $k$-Hessian equation when $k<\frac{n}{2}$ as follows
\begin{equation}\label{case1Equa1.1}
\left\{\begin{aligned} S_{k}(D^2 u)=&0 \qquad\text{in}\quad \Omega^c:=\mathbb{R}^n\setminus\Omega,\\
u=&-1\quad \text{on}\ \partial\Omega,\\
\lim\limits_{x\rightarrow\infty}u(x)=&0.\end{aligned}
\right.
\end{equation}

\begin{theorem}\label{main07201}
Assume $1\le k<\frac{n}{2}$. Let $\Omega$ be a smoothly convex domain in $\mathbb{R}^n$ and strictly $(k-1)$-convex. There exists a unique  $k$-convex solution $u\in C^{1,1}(\overline{\Omega^c})$ of the equation \eqref{case1Equa1.1}. Moreover, there exists uniform constant $C$ such that for any $x\in \Omega^c$  the following holds
\begin{align}\label{decay10720}
\left\{
\begin{aligned}
C^{-1}|x|^{-\frac{n-2k}{k}}\le& -u(x)\le C|x|^{-\frac{n-2k}{k}},\\
 C^{-1}|x|^{-\frac{n-k}{k}}\le&|Du|(x)\le C|x|^{-\frac{n-k}{k}},\\
|D^2u|(x)\le& C|x|^{-\frac{n}{k}},
\end{aligned}
\right.
\end{align}
\end{theorem}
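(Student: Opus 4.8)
The plan is to follow the now-standard Perron/approximation strategy for degenerate Hessian equations in exterior domains, obtaining the homogeneous solution as a limit of solutions of non-degenerate approximating problems on large annuli, with all estimates uniform. First I would fix $R>R_0$ and, for small $\varepsilon>0$, solve the Dirichlet problem $S_k(D^2 u_{R,\varepsilon})=\varepsilon$ in the annulus $\Omega^c\cap B_R$ with $u_{R,\varepsilon}=-1$ on $\partial\Omega$ and $u_{R,\varepsilon}=0$ on $\partial B_R$; solvability and smoothness of $u_{R,\varepsilon}$ follow from Caffarelli--Nirenberg--Spruck once one exhibits a $k$-convex subsolution and a $(k-1)$-convex domain, both of which come from the convexity and strict $(k-1)$-convexity of $\Omega$ together with an explicit radial barrier. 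The natural radial barriers are built from the Green-type function $\phi(x)=-|x|^{(2k-n)/k}$, which is exactly $k$-harmonic away from the origin: one checks $S_k(D^2\phi)=0$ for $x\ne 0$, and that suitable affine combinations $a\,\phi+b$ give upper and lower barriers matching the boundary data after rescaling so that $B_{r_0}\subset\Omega\subset B_{R_0/2}$.

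The core of the argument is the \emph{uniform} $C^{1,1}$ estimate, independent of $R$ and $\varepsilon$. For the $C^0$ bound I would sandwich $u_{R,\varepsilon}$ between the radial barriers $C^{-1}\phi$ and $C\phi$ (after adjusting additive constants to absorb the $=-1$ data), which directly yields the first line of \eqref{decay10720} in the limit. For the gradient bound, the upper estimate $|Du|\le C|x|^{-(n-k)/k}$ follows by differentiating the radial barriers and using them as local obstacles near $\partial\Omega$ together with the maximum principle applied to suitable test quantities; the decay rate $|x|^{-(n-k)/k}$ is forced because it is $|D\phi|$. The Hessian bound $|D^2u|\le C|x|^{-n/k}$ is the delicate point: here I would run the usual concavity/maximum-principle argument for $S_k^{1/k}$ applied to the largest eigenvalue of $D^2 u_{R,\varepsilon}$, but carried out on dyadic annuli $B_{2\rho}\setminus B_\rho$ and combined with a scaling: set $v_\rho(y)=\rho^{-(2k-n)/k}u(\rho y)$, note that $v_\rho$ solves $S_k(D^2 v_\rho)=\varepsilon\rho^{(something)}$ on a fixed annulus with uniformly bounded data, apply the interior+boundary $C^{1,1}$ estimate for non-degenerate $k$-Hessian equations there, and scale back. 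Uniformity in $\varepsilon$ is where one must be careful that the boundary $C^{1,1}$ estimate near $\partial\Omega$ does not blow up as $\varepsilon\to0$; this is handled by the subsolution barrier argument of Ivochkina--Trudinger--Wang / Guan, which only needs a bound on $\|f^{1/k}\|_{C^{1,1}}$ — and for constant $f=\varepsilon$ this is zero.

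With the uniform $C^{1,1}$ bounds in hand, I would pass to the limit $R\to\infty$ and then $\varepsilon\to0$ (using Arzelà--Ascoli on compact subsets and weak-$*$ compactness of $D^2u_{R,\varepsilon}$ in $L^\infty_{loc}$), obtaining a $k$-convex $u\in C^{1,1}(\overline{\Omega^c})$ with $S_k(D^2u)=0$ a.e., the prescribed boundary value $u=-1$, and the decay $u(x)\to0$ forced by the sandwiching barriers; the estimates \eqref{decay10720} are inherited by lower semicontinuity. Uniqueness follows from the comparison principle for $k$-convex functions (viscosity or a.e. formulation): if $u_1,u_2$ are two such solutions then for any $\delta>0$ the function $(1-\delta)$-rescaling of one lies below the other near infinity by the decay rates and on $\partial\Omega$ by the boundary data, so $u_1\le u_2$, and by symmetry $u_1=u_2$; the $C^{1,1}$ regularity of \emph{the} solution then also follows since the unique limit equals every subsequential limit. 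The main obstacle I anticipate is precisely making the Hessian estimate uniform both in $\varepsilon$ and at spatial infinity simultaneously — the scaling that normalizes the equation at scale $|x|$ changes the right-hand side, and one must check the normalized right-hand side stays bounded (this is automatic here since $\varepsilon\to0$) while the normalized boundary geometry at $\partial\Omega$ stays uniformly $(k-1)$-convex (automatic, since $\partial\Omega$ is fixed and the relevant annulus recedes to infinity).
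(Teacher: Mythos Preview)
Your overall architecture (approximate by non-degenerate problems on annuli, prove uniform $C^{1,1}$, pass to the limit, uniqueness by comparison) matches the paper, but the specific choices you make create a genuine gap in the Hessian estimate. With the constant right-hand side $S_k(D^2 u_{R,\varepsilon})=\varepsilon$, the rescaling $v_\rho(y)=\rho^{-(2k-n)/k}u(\rho y)$ that you propose yields $S_k(D^2 v_\rho)=\rho^{\,n}\varepsilon$, which blows up as $\rho\to\infty$ for any fixed $\varepsilon>0$; so the ``normalized right-hand side stays bounded'' claim is false, and the dyadic scaling argument cannot produce the decay $|D^2u|\le C|x|^{-n/k}$ uniformly in $R$. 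The same choice also breaks the lower $C^0$ barrier: the radial Green-type function $-C|x|^{-(n-2k)/k}$ has $S_k=0<\varepsilon$ and is therefore \emph{not} a subsolution of your approximating equation, so you do not get $u_{R,\varepsilon}\ge -C|x|^{-(n-2k)/k}$ uniformly. Finally, the theorem also asserts the \emph{lower} gradient bound $|Du|\ge C^{-1}|x|^{-(n-k)/k}$, which your proposal does not address at all and which does not follow from barriers alone.

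The paper fixes all three issues by a different choice of approximation. Instead of a constant it takes the spatially decaying right-hand side $f^{\varepsilon}(x)=c_{n,k}\,\varepsilon^2(|x|^2+\varepsilon^2)^{-n/2-1}$, chosen so that the smoothed Green function $w^{\varepsilon}(x)=-(R_0^2+\varepsilon^2)^{(n-2k)/2k}(|x|^2+\varepsilon^2)^{-(n-2k)/2k}$ is an \emph{exact} solution; gluing $w^\varepsilon$ to a defining-function subsolution near $\partial\Omega$ produces a global subsolution with the correct decay, and the boundary value on $\partial B_R$ is taken to be this subsolution rather than $0$. The uniform estimates are then obtained not by scaling but by maximum-principle arguments applied to scale-invariant weighted quantities: $P=|Du|^2(-u)^{-2(n-k)/(n-2k)}$ for the gradient, and a Chou--Wang type quantity $u_{\xi\xi}\,\varphi(P)\,(-u)^{-n/(n-2k)}$ for the Hessian, each reduced to boundary estimates on $\partial\Omega$ and $\partial B_R$ where the subsolution and the explicit radial data give the required control. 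The gradient lower bound is proved separately by showing $x\cdot Du\ge c_0|x|^{2-n/k}$ via a maximum principle for $H=x\cdot Du+b_1 u$. If you want to keep your framework, you should replace the constant $\varepsilon$ by a right-hand side with the Green-function scaling, and replace the dyadic rescaling by a direct weighted maximum-principle argument of this kind.
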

where the $k$-convex solution is defined in Section 2 and we use the notation $\overline \Omega^c:=\mathbb R^n\setminus \Omega$.
\subsubsection{\emph{\textbf{Case2:}} $k>\frac{n}{2}$}
Since the Green function in this case is $|x|^{\frac{2k-n}{k}}$, we consider the $k$-Hessian equation when $k>\frac{n}{2}$ as follows
\begin{equation}\label{case2Equa1.2}
\left\{\begin{aligned} S_{k}(D^2 u)=&0 \qquad\qquad\ \ \ \text{in}\quad \Omega^c,\\
u=&1\qquad \qquad \
\  \  \text{on}\ \ \  \partial\Omega,\\
u(x)=&|x|^{\frac{2k-n}{k}}+O(1)\ \text{as}\ |x|\rightarrow \infty.
\end{aligned}
\right.
\end{equation}
\begin{theorem}\label{main07202}
Assume $ k>\frac{n}{2}$. Let $\Omega$ be a smoothly convex domain in $\mathbb{R}^n$ and strictly $(k-1)$-convex. There exists a unique  $k$-convex solution $u\in C^{1,1}(\overline{\Omega^c})$ of the equation \eqref{case2Equa1.2}. Moreover, there exists uniform constant $C$ such that for any $x\in \Omega^c$ the following holds
\begin{align}\label{decay20720}
\left\{
\begin{aligned}
|u(x)-|x|^{\frac{2k-n}{k}}|\le& C,\\
C^{-1}|x|^{\frac{k-n}{k}}\le |Du|(x)\le& C|x|^{\frac{k-n}{k}},\\
|D^2u|(x)\le& C|x|^{-\frac{n}{k}}.
\end{aligned}
\right.
\end{align}
\end{theorem}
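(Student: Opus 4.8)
The plan is to obtain Theorem~\ref{main07202} by the same \emph{approximation plus a~priori estimates} scheme announced in the abstract, now carried out for the supersolution regime $k>\frac n2$. First I would fix a large radius $R$ and, on the bounded annular domain $\Omega_R:=B_R\setminus\Omega$, solve the \emph{non-degenerate} Dirichlet problem $S_k(D^2u^{\varepsilon,R})=\varepsilon$ with $u^{\varepsilon,R}=1$ on $\partial\Omega$ and $u^{\varepsilon,R}=R^{\frac{2k-n}{k}}$ on $\partial B_R$; solvability follows from the classical theory of Caffarelli--Nirenberg--Spruck (or Guan) cited in the excerpt, since $\Omega$ is strictly $(k-1)$-convex and $\partial B_R$ is uniformly convex, and one checks that a suitable radial function is a global subsolution so that the $(k-1)$-convexity hypothesis on the domain is met. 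The radial solutions of $S_k(D^2v)=\varepsilon$ and $S_k(D^2v)=0$ give explicit barriers: from $S_k=0$ one recovers $c_1|x|^{\frac{2k-n}{k}}+c_2$, while the constant $\varepsilon$ perturbation is absorbed at the cost of lower-order terms. Comparing $u^{\varepsilon,R}$ against these radial barriers from inside and outside yields, \emph{uniformly in $\varepsilon$ and $R$}, the two-sided bound $|u^{\varepsilon,R}(x)-|x|^{\frac{2k-n}{k}}|\le C$ and, on $\partial\Omega$ and $\partial B_R$, the gradient bounds $C^{-1}|x|^{\frac{k-n}{k}}\le|Du^{\varepsilon,R}|\le C|x|^{\frac{k-n}{k}}$.

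Next I would upgrade these to interior estimates. The global $C^1$ bound comes from the maximum principle applied to first derivatives together with the boundary gradient estimates just described (the strict $(k-1)$-convexity of $\Omega$ supplies a local convex subsolution near $\partial\Omega$, and the uniform convexity of $\partial B_R$ handles the outer boundary). For the $C^{1,1}$ bound one differentiates the equation twice and uses the concavity of $S_k^{1/k}$ on the positive cone to run the standard Pogorelov-type argument for $k$-Hessian equations (as in Ivochkina--Trudinger--Wang and Chou--Wang), yielding $|D^2u^{\varepsilon,R}|\le C$ on $\partial\Omega_R$ and hence, by the global second-derivative maximum principle, throughout $\Omega_R$; the decay factor $|x|^{-\frac n k}$ in \eqref{decay20720} is then recovered by the usual rescaling $u^{\varepsilon,R}(x)\approx |x|^{\frac{2k-n}{k}}\,w(x/|x|)$, i.e.\ applying the uniform estimate on dyadic annuli $B_{2\rho}\setminus B_\rho$ and tracking the scaling weights. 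Having uniform $C^{1,1}$ estimates independent of $\varepsilon$ and $R$, I would let $\varepsilon\to0$ and $R\to\infty$ and extract, via Arzel\`a--Ascoli and a diagonal argument, a limit $u\in C^{1,1}_{\mathrm{loc}}(\overline{\Omega^c})$ which is $k$-convex, satisfies $S_k(D^2u)=0$ in the viscosity (equivalently, a.e.) sense, attains the boundary value $1$, and obeys all three inequalities in \eqref{decay20720}; in particular $u(x)=|x|^{\frac{2k-n}{k}}+O(1)$, which pins down the asymptotics.

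Finally, uniqueness: given two $k$-convex $C^{1,1}$ solutions $u_1,u_2$ with the same boundary data and the same asymptotic expansion, the difference $u_1-u_2$ tends to $0$ at infinity by \eqref{decay20720}, so the comparison principle for the (degenerate) $k$-Hessian operator on the exterior domain—applied on $B_R\setminus\Omega$ with the vanishing-at-infinity tail controlled by a small multiple of a radial barrier—forces $u_1\equiv u_2$. I expect the main obstacle to be the \emph{uniformity} of the second-order estimate: one must show the constant $C$ in the Pogorelov estimate does not deteriorate as $R\to\infty$, which requires choosing the auxiliary test function and the subsolution barriers with the correct homogeneity in $|x|$ so that all error terms scale like the leading term; the strict $(k-1)$-convexity of $\partial\Omega$ is used precisely here to guarantee a definite lower bound on the relevant curvature quantities near the inner boundary, and the positive lower bound on $|Du|$ (needed later for the monotonicity formula) drops out of the same radial-barrier comparison.
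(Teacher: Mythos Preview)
Your overall architecture (non-degenerate approximation on $\Omega_R$, uniform $C^{1,1}$ estimates, limit, comparison) matches the paper, but there are two genuine gaps.

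\textbf{The uniqueness argument is incorrect.} You claim that for two solutions the difference $u_1-u_2$ tends to $0$ at infinity ``by \eqref{decay20720}''. But \eqref{decay20720} only gives $u_i(x)=|x|^{\frac{2k-n}{k}}+O(1)$, so $u_1-u_2$ is merely \emph{bounded}, not $o(1)$; a direct comparison on $B_R\setminus\Omega$ therefore does not close. The paper circumvents this by comparing $u_1$ with $t u_2$ for $t\in(0,1)$: since $u_1-tu_2=(1-t)|x|^{\frac{2k-n}{k}}+O(1)\to+\infty$, one has $u_1\ge tu_2$ on $\partial B_R$ for $R$ large, hence $u_1\ge tu_2$ throughout by the comparison principle, and then one lets $t\uparrow1$.

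\textbf{The constant right-hand side $\varepsilon$ is the wrong regularization.} A radial particular solution of $S_k(D^2v)=\varepsilon$ grows like $c\,\varepsilon^{1/k}|x|^{2}$, which for $k>\frac n2$ \emph{dominates} the target profile $|x|^{\frac{2k-n}{k}}$; this is not a ``lower-order'' correction, and you will not obtain $|u^{\varepsilon,R}(x)-|x|^{\frac{2k-n}{k}}|\le C$ uniformly in $\varepsilon,R$. For the same reason your dyadic rescaling for the $C^2$ decay breaks: setting $v(y)=\rho^{-\frac{2k-n}{k}}u(\rho y)$ turns $S_k(D^2u)=\varepsilon$ into $S_k(D^2v)=\varepsilon\rho^{\,n}$, which blows up as $\rho\to\infty$. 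The paper instead takes the approximating right-hand side to be $f^{2,\varepsilon}(x)=c_{n,k}\,\varepsilon^2(|x|^2+\varepsilon^2)^{-\frac n2-1}$, chosen so that an explicit $w^{2,\varepsilon}=(|x|^2+\varepsilon^2)^{\frac{2k-n}{2k}}+\text{const}$ solves it exactly; this gives honest barriers with the correct asymptotics and makes $|D\log f^{2,\varepsilon}|^2+|D^2\log f^{2,\varepsilon}|\le C|x|^{-2}$. The decay in $|D u|$ and $|D^2u|$ is then obtained \emph{directly}, not by rescaling: the gradient bound comes from a maximum principle for $P=|Du|^2u^{\frac{2(n-k)}{2k-n}}$, and the Hessian bound from a maximum principle for $u_{\xi\xi}\,\varphi(P)\,u^{\frac{n}{2k-n}}$, whose weights are exactly tuned so that a bound on the test function is equivalent to the claimed pointwise decay.
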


\subsubsection{\emph{\textbf{Case3:}} $k=\frac{n}{2}$}
Since the Green function in this case is $\log|x|$, we  consider the $k$-Hessian equation when $k=\frac{n}{2}$ as follows
\begin{equation}\label{case3Equa1.3}
\left\{\begin{aligned} S_{\frac{n}{2}}(D^2 u)=&0 \ \ \qquad\text{in}\ \ \Omega^c,\\
u=&0\ \ \qquad \text{on}\ \partial\Omega,\\
u(x)=\log|x|&+O(1) \ \text{as}\ {|x|\rightarrow\infty}.
\end{aligned}
\right.
\end{equation}
\begin{theorem}\label{main07203}
Assume $k=\frac{n}{2}$. Let $\Omega$ be a smoothly convex domain in $\mathbb{R}^n$ and strictly $(k-1)$-convex. There exists a unique  $k$-convex solution $u\in C^{1,1}(\overline{\Omega^c})$ of the equation \eqref{case3Equa1.3}. Moreover, there exists uniform constant $C$ such that for any $x\in \Omega^c$ the following holds
\begin{align}\label{decay30720}
\left\{
\begin{aligned}
 |u(x)-\log|x||\le& C,\\
  C^{-1}|x|^{-1}\le|Du|(x)\le& C|x|^{-1},\\
|D^2u|(x)\le& C|x|^{-2}.
\end{aligned}
\right.
\end{align}
\end{theorem}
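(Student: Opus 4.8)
All three theorems follow the same scheme; I describe it for the borderline case $k=\frac n2$ of Theorem~\ref{main07203}, the cases $k<\frac n2$ and $k>\frac n2$ being identical after replacing the reference function $\log|x|$ by $|x|^{\frac{2k-n}{k}}$. The steps are: (i) reduce to non-degenerate Dirichlet problems on bounded annuli; (ii) prove $C^0$, $C^1$ and $C^{1,1}$ bounds and the decay rates of \eqref{decay30720}, all uniform in the approximation parameters; (iii) pass to the limit; (iv) prove uniqueness, hence the stated global $C^{1,1}$ regularity, by the comparison principle for $k$-convex functions.

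\medskip
\emph{Step 1 (approximation).} For $R>R_0$ put $\Omega_R:=B_R\setminus\overline\Omega$, choose $\varepsilon_R\downarrow 0$ as $R\to\infty$ (the rate, of order $R^{-n}$, is dictated by the radial model below), and solve
\begin{equation*}
\left\{\begin{aligned}
S_k(D^2u)&=\varepsilon_R&&\text{in }\Omega_R,\\
u&=0&&\text{on }\partial\Omega,\\
u&=\mu_R&&\text{on }\partial B_R.
\end{aligned}\right.
\end{equation*}
The constant $\mu_R$ is read off from the radial identity $S_k(D^2\phi)=\frac{\binom{n-1}{k}}{(n-k)\,r^{n-1}}\bigl(r^{n-k}(\phi')^k\bigr)'$ (valid for $k<n$): integrating $S_k(D^2\phi)=\varepsilon_R$ gives $\phi'=\bigl(\tfrac{\varepsilon_R}{\binom nk}\,r^{k}+a_R\,r^{k-n}\bigr)^{1/k}$, and one picks $a_R>0$ and the additive constant so that $\phi$ matches the value on $\partial B_{r_0}$ and $\phi(r)-\log r\to 0$ uniformly on $[r_0,R]$ as $\varepsilon_R\to 0$; this forces $\mu_R=\log R+O(1)$. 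Since $\Omega$ is convex and strictly $(k-1)$-convex and $B_R$ is a ball, both components of $\partial\Omega_R$ are strictly $(k-1)$-convex with respect to the inner normal of $\Omega_R$, so $\Omega_R$ admits a smooth strictly $k$-convex subsolution (radial away from $\partial\Omega$, built from supporting hyperplanes of $\Omega$ near $\partial\Omega$); by the solvability theory of Caffarelli--Nirenberg--Spruck and Guan cited in the introduction there is a unique smooth $k$-convex solution $u_R$.

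\medskip
\emph{Step 2 (uniform estimates).} Comparing $u_R$ with the radial sub/supersolutions $\log|x|\pm C$ (adjusted to the boundary data) gives $\bigl|u_R(x)-\log|x|\bigr|\le C$ on $\Omega_R$ with $C$ independent of $R$, hence the first line of \eqref{decay30720} in the limit. The gradient bound $|Du_R|(x)\le C|x|^{-1}$ follows by rescaling: at $x$ with $|x|=\rho$ put $\tilde u(y):=u_R\!\left(x+\tfrac\rho2 y\right)-\log\rho$, so that $\tilde u=O(1)$ on $B_1$ and $S_k(D^2\tilde u)=\bigl(\tfrac\rho2\bigr)^{2k}\varepsilon_R$ --- bounded precisely because $2k=n$ and $\varepsilon_R$ is of order $R^{-n}$ --- whence an interior gradient estimate for the $k$-Hessian equation (Chou--Wang type), after unscaling, yields the claim. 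The second-derivative bound $|D^2u_R|(x)\le C|x|^{-2}$ is the heart of the matter: one proves a \emph{global} a priori estimate for the approximating problem on $\Omega_R$ by running a maximum-principle argument on a decay-weighted auxiliary function such as $w=|x|^{2}\,\sup_{|e|=1}\langle D^2u_R\,e,e\rangle$ (the right-hand side $\varepsilon_R^{1/k}$ being constant, so its concavity costs nothing), the boundary contributions being controlled by barriers at $\partial\Omega$ --- where the strict $(k-1)$-convexity supplies the double-normal estimate --- and by radial comparison at $\partial B_R$. Finally, the positive lower bound $|Du_R|(x)\ge c|x|^{-1}$ combines a Hopf lemma at $\partial\Omega$ against the strictly $k$-convex subsolution, the lower radial barrier together with the rescaling near infinity (where $D\tilde u$ is $C^1$-close to the non-vanishing gradient of the exact radial solution), and, on the compact intermediate annulus, the absence of critical points of $u_R$; for the last the convexity of $\Omega$ is essential, via the maximum principle for $u_R$ (subharmonic, being $k$-convex) together with a maximum-principle argument for the dilation $v=\langle Du_R,x\rangle$, which solves $S_k^{ij}\partial_{ij}v=2k\varepsilon_R>0$ and is positive on $\partial\Omega_R$.

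\medskip
\emph{Steps 3--4 (limit and uniqueness).} Since the bounds of Step~2 are independent of $R$, a diagonal compactness argument extracts a subsequence of $(u_R)$ converging in $C^{1,\beta}_{\mathrm{loc}}(\overline{\Omega^c})$ for each $\beta<1$ to a $k$-convex $u\in C^{1,1}(\overline{\Omega^c})$ solving $S_k(D^2u)=0$ in the viscosity and a.e.\ sense, with $u=0$ on $\partial\Omega$, $u=\log|x|+O(1)$ as $|x|\to\infty$, and satisfying \eqref{decay30720} with the same constants. Uniqueness follows from the comparison principle for $k$-convex functions: if $u,v$ are two such solutions then for every $\delta>0$ the function $(1+\delta)v$ dominates $u$ on $\partial\Omega$ and, by the common asymptotics, on the complement of a large ball (the $O(1)$ discrepancies being absorbed by $\delta\log|x|\to\infty$ up to an additive shift controlled by a further comparison), whence $u\le(1+\delta)v$ on $\Omega^c$; letting $\delta\to0$ and interchanging $u$ and $v$ gives $u\equiv v$, so the function constructed above is \emph{the} solution, which proves the regularity.

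\medskip
The principal obstacle is Step~2: the second-order estimate uniform in the approximation --- in particular converting a scale-invariant interior bound into the sharp $|x|^{-2}$ decay, which is exactly what dictates the coupling between $\varepsilon_R$ and $R$ in the borderline case --- together with the positive lower bound for $|Du_R|$ on the intermediate region, where the degeneracy of the limiting equation rules out a soft maximum-principle argument and the convexity of $\Omega$ must be used in an essential way.
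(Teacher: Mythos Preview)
Your overall architecture---non-degenerate approximation on annuli, uniform estimates, limit, comparison-based uniqueness---matches the paper's, but several technical choices diverge and one of them does not close.

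The paper does not use a constant right-hand side coupled to $R$; it takes $f^\varepsilon(x)=c_{n,k}\,\varepsilon^2(|x|^2+\varepsilon^2)^{-n/2-1}$ with $\varepsilon$ \emph{decoupled} from $R$, chosen so that the radial function $w^\varepsilon(x)=\tfrac12\log\frac{|x|^2+\varepsilon^2}{R_0^2+\varepsilon^2}$ is an exact solution. This supplies ready-made barriers and a clean two-step limit ($R\to\infty$ first, then $\varepsilon\to 0$). For the gradient upper bound the paper does not rescale: it runs a maximum principle on the $P$-function $P=|Du|^2 e^{2u}$ (the borderline choice $g(u)=2u$ is forced), reducing the global estimate to the boundary. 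Your rescaling route is plausible, though you still owe the boundary gradient bounds separately. For the gradient lower bound your dilation function $v=x\cdot Du$ is exactly the paper's choice; uniqueness is also argued as in the paper.

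The genuine gap is in the second-order estimate. Your claim that a maximum-principle argument on $w=|x|^2\,u_{\xi\xi}$ closes because ``$\varepsilon_R^{1/k}$ is constant, so its concavity costs nothing'' misidentifies the obstruction: the troublesome contributions are the third-order terms $F^{ii}(u_{\xi\xi i}/u_{\xi\xi})^2$ coming from the critical-point condition, not derivatives of $f$. The paper's auxiliary function is $u_{\xi\xi}\,\varphi(P)\,h(u)$ with $h(u)=e^{2u}$ (your $|x|^2$, to leading order) \emph{together with} the Chou--Wang factor $\varphi(P)=(M-P)^{-\tau}$. The extra factor is exactly what produces, via $\tfrac{\varphi'}{\varphi}F^{ii}P_{ii}$, a positive term of size $h(\log h)'\sum_i F^{ii}u_{ii}^2$ that dominates in the regime $\lambda_k\ge\delta\lambda_1$; and in the complementary regime $\lambda_k\le\delta\lambda_1$ it absorbs the shortfall in the concavity comparison $2\lambda_1 S_{k-2}(\lambda|1i)\ge(2-\epsilon_0)S_{k-1}(\lambda|i)$. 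With the bare weight $|x|^2$ (equivalently $h$ alone) the first regime yields only a lower bound on $F^{11}u_1^2$ and no control on $\lambda_1 h$, so the argument does not close. You will need the Chou--Wang device or an equivalent modification of the test function to carry out Step~2.
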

\subsection{Applications}
To solve the above problems, we  consider the following approximating equation
\begin{align*}
	\left\{
	\begin{aligned}
		 S_{k}(u^{\varepsilon})=&f^{\epsilon}\ \text{in}\ \Omega^{c}\\
		u^{\varepsilon}=&-1\ \text{if}\ k<\frac{n}{2}, u^{\varepsilon}=1 \ \text{if}\ k>\frac{n}{2}, u^{\varepsilon}=0, \ \text{if}\ k=\frac{n}{2}\ \text{on}\ \p\Omega.\\
		u^{\varepsilon}(x)\rightarrow& 0 \ \text{if}\ k<\frac{n}{2},
		u^{\varepsilon}(x)=|x|^{\frac{2k-n}{k}}+O(1)\ \text{if}\ k>\frac{n}{2},
		u^{\varepsilon}(x)=\log|x|+O(1)
		\ \text{if}\ k=\frac{n}{2}, |x|\rightarrow \infty.
		\end{aligned}
	\right.
	\end{align*}
where $f^{\varepsilon}=c_{n,k}\varepsilon^2(|x|^2+\varepsilon^2)^{-\frac{n}{2}-1}$ (see  the precise value of $c_{n,k}$ in Section 4).

$u^{\varepsilon}$ will be obtained by approximating solutions $u^{\varepsilon,R}$ defined on bounded domains: $\Omega_R:=B_R\setminus\overline\Omega$(see Section 4 for precise definition of $u^{\varepsilon,R}$).
The existence and uniqueness of the $k$-convex solution of $u^{\varepsilon,R}$ follows from B. Guan\cite{guan1994cpde} if we can construct a subsolution, which can be constructed since we assume $\Omega$ is convex.
The key point is to establish the uniform $C^{2}$ estimates for $u^{\varepsilon,R}$.

As an application of our $C^{2}$ estimates, we can prove an almost monotonicity formula along the level set of $u^{\varepsilon}$ (see Section 6). Consequently, we get  geometric inequalities of $\p\Omega$ as follows.
\begin{theorem}\label{geometric0725}
	Let $\Omega$ be a smoothly convex domain in $\mathbb{R}^n$ and strictly $(k-1)$-convex.
	\begin{enumerate}[{(i)}]
		\item
	Assume $1\le k<\frac{n}{2}$ and $b\ge\frac{k(n-k-1)}{n-k}$. Let $u$ be the unique $C^{1,1}$ solution in Theorem \ref{main07201}. We have
		\end{enumerate}
	\begin{align}\label{0727geometric}
		\int_{\p\Omega}{|Du|^{b+1}H_{k-1}}\le \frac{n-2k}{n-k}\int_{\p\Omega}{|Du|^{b}H_{k}},
	\end{align}
	\end{theorem}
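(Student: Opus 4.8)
\noindent The plan is to deduce \eqref{0727geometric} from an almost monotonicity formula along the level sets of the non-degenerate approximations, and then pass to the limit. First I would fix the smooth strictly $k$-convex solutions $u^{\varepsilon}$ of $S_k(D^2u^{\varepsilon})=f^{\varepsilon}$ (obtained from $u^{\varepsilon,R}$ on $\Omega_R=B_R\setminus\overline\Omega$ as in the construction preceding Theorem \ref{main07201}), with boundary value $-1$ on $\p\Omega$ and the prescribed behaviour at infinity. By the uniform $C^{1,1}$-estimates and the uniform positive lower bound for $|Du^{\varepsilon}|$, each level set $\Sigma_t:=\{u^{\varepsilon}=t\}$, $t\in[-1,0)$, is a smooth closed hypersurface; I write $\nu=Du^{\varepsilon}/|Du^{\varepsilon}|$ and let $\kappa=(\kappa_1,\dots,\kappa_{n-1})$ denote its principal curvatures, noting that $\Sigma_{-1}=\p\Omega$ by the maximum principle. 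Setting
\[
\phi(t)=\int_{\Sigma_t}|Du^{\varepsilon}|^{b+1}H_{k-1}(\kappa)\,dA,\qquad
\psi(t)=\int_{\Sigma_t}|Du^{\varepsilon}|^{b}H_{k}(\kappa)\,dA ,
\]
and using that $u^{\varepsilon}\to u$ in $C^{1}$ up to the fixed hypersurface $\p\Omega$, it will suffice (since $b-k+1\ge\frac{n-2k}{n-k}>0$) to show that $(b-k+1)\psi(-1)+\nu\,\phi(-1)\ge -C\varepsilon^{2}$ with $\nu:=-\frac{(n-k)(b-k+1)}{n-2k}$ and $C$ independent of $\varepsilon$, and then let $\varepsilon\to0$.

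The first ingredient is an exact identity. Since the Newton tensor $S_k^{ij}(D^2u^{\varepsilon})$ is divergence free, $\p_iS_k^{ij}=0$, and on a regular level set of a solution one has the standard identities
\[
S_k^{ij}u^{\varepsilon}_iu^{\varepsilon}_j=|Du^{\varepsilon}|^{k+1}H_{k-1}(\kappa),\qquad
\frac{S_k^{ij}u^{\varepsilon}_iu^{\varepsilon}_{jl}u^{\varepsilon}_l}{|Du^{\varepsilon}|^{2}}=f^{\varepsilon}-|Du^{\varepsilon}|^{k}H_{k}(\kappa),
\]
integrating the divergence of the vector field with $j$-th component $|Du^{\varepsilon}|^{b-k+1}S_k^{ij}(D^2u^{\varepsilon})u^{\varepsilon}_i$ over $\{s<u^{\varepsilon}<t\}$ and applying the coarea formula gives, for $-1\le t<0$,
\[
\phi'(t)=-(b-k+1)\,\psi(t)+(b+1)\int_{\Sigma_t}|Du^{\varepsilon}|^{b-k}f^{\varepsilon}\,dA ;
\]
because $\Sigma_t\subset\Omega^c\subset\mathbb R^n\setminus B_{r_0}$ and $f^{\varepsilon}\le c_{n,k}\varepsilon^{2}|x|^{-n-2}$, the last term is $O(\varepsilon^{2})$ uniformly for $t$ near $-1$. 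This step is routine.

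The main step is the almost monotonicity of $\Phi(t):=(-t)^{\nu}\phi(t)$, namely $\Phi'(t)\le C\varepsilon^{2}$ for $t$ near $-1$. Since $\Phi'(t)=(-t)^{\nu-1}\big[(-t)\phi'(t)-\nu\,\phi(t)\big]$, by the identity above this amounts to the pointwise inequality $(n-2k)(-t)\,\psi(t)\ge (n-k)\,\phi(t)-C\varepsilon^{2}$. I would establish it by differentiating $\psi$ along the level-set foliation, using the Riccati-type evolution equations for the second fundamental form of $\Sigma_t$ and for the lapse $1/|Du^{\varepsilon}|$, eliminating the normal second derivative $u^{\varepsilon}_{\nu\nu}$ by the equation $S_k(D^2u^{\varepsilon})=f^{\varepsilon}$, and regrouping the resulting third-order terms; after a sharp Newton--MacLaurin inequality for the principal curvatures of $\Sigma_t$ (which are $(k-1)$-convex, inherited by continuity from the strict $(k-1)$-convexity of $\p\Omega$, so that all $H_j(\kappa)>0$) the expression should split into a manifestly non-positive term and an $O(\varepsilon^{2})$ remainder. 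The assumption $b\ge\frac{k(n-k-1)}{n-k}$ is expected to be precisely the sharp threshold making the ``bad'' curvature term produced by this step non-positive; in the $u^{\varepsilon,R}$-picture an extra boundary contribution on $\p B_R$ appears, to be absorbed using the decay estimates of Theorem \ref{main07201} as $R\to\infty$.

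Finally, evaluating $\Phi'(t)\le C\varepsilon^{2}$ at $t=-1$ and inserting the identity of the second step gives $(b-k+1)\psi(-1)+\nu\,\phi(-1)\ge -C\varepsilon^{2}-(b+1)\int_{\p\Omega}|Du^{\varepsilon}|^{b-k}f^{\varepsilon}\ge -C\varepsilon^{2}$; letting $\varepsilon\to0$ and dividing by $b-k+1>0$ produces exactly \eqref{0727geometric}. I expect the only serious obstacle to be the monotonicity step: turning the $t$-derivative of $\psi$ — a priori involving third derivatives of $u^{\varepsilon}$ — into a signed quantity, which is exactly where the uniform $C^{1,1}$-estimates, the $(k-1)$-convexity of the level sets, and the precise Newton--MacLaurin inequality with the threshold on $b$ are all needed; controlling the $\p B_R$-term as $R\to\infty$ likewise rests on the decay rates of Theorem \ref{main07201}. (Equality in \eqref{0727geometric} holds for balls, where the $\Sigma_t$ are concentric spheres and $\Phi$ is constant — a useful check on the value of $\nu$.)
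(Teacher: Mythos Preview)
Your overall architecture is the same as the paper's: work with the smooth approximations $u^{\varepsilon}$, form level-set integrals, compute a first variation by the divergence theorem (your formula $\phi'(t)=-(b-k+1)\psi(t)+(b+1)\int_{\Sigma_t}|Du^{\varepsilon}|^{b-k}f^{\varepsilon}$ is exactly the paper's \eqref{0726I}), use a Newton--MacLaurin inequality with the sharp threshold $b\ge c_{n,k}=\tfrac{k(n-k-1)}{n-k}$, and pass to the limit $\varepsilon\to0$. The identification $\Phi(t)=(-t)^{\nu}\phi(t)$ coincides with the paper's $I_{a,b,k}(t)$ (with $a=b-k+1$, $\nu=a\tfrac{n-k}{2k-n}$).

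There is, however, a genuine gap in your ``main step''. You observe that $\Phi'(t)\le C\varepsilon^{2}$ is \emph{equivalent} to $(n-2k)(-t)\psi(t)\ge(n-k)\phi(t)-C\varepsilon^{2}$; at $t=-1$ this is precisely the inequality you are trying to prove, so nothing has been reduced. You then propose to ``establish it by differentiating $\psi$'', but differentiating $\psi$ yields information about $\Phi''$, not $\Phi'$, and you do not explain how to close the loop. The paper's mechanism is a \emph{second}-order monotonicity: one shows that $g^{a_0}(t)\,\Phi'(t)=t^{2}\Phi'(t)$ is almost increasing in $t$ (this is the quantity $J_{a+a_0,b,k}$ in Lemma~\ref{0725mono1}), and then lets $t_0\to0^{-}$ using $t_0^{2}\Phi'(t_0)\to0$ from the decay estimates. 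The extra weight $g^{a_0}(t)=t^{2}$ (i.e.\ $a_0=-2\tfrac{n-2k}{n-k}$) is not cosmetic: it is exactly what makes the quadratic form $\mathcal{L}$ in \eqref{0723} a perfect square $(b-c_{n,k})\big(\tfrac{n-k}{n-2k}|D\log u^{\varepsilon}|-\tfrac{H_k}{H_{k-1}}\big)^{2}$; without it the cross-terms do not organize into a non-negative expression and your ``manifestly non-positive term'' will not materialize. Your mention of the $\partial B_R$ contribution is the right asymptotic idea, but it should be recast as $t_0\to0^{-}$ for the weighted quantity $t_0^{2}\Phi'(t_0)$.

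A second, more minor point: the paper does not use Riccati evolution equations. Instead it applies the divergence theorem a second time to $S_{k+1}^{ij}u^{\varepsilon}_{i}$ (using $H_{m-1}|Du|^{m+1}=S_m^{ij}u_iu_j$ from Lemma~\ref{07261}) and then bounds $S_{k+1}(D^2u^{\varepsilon})$ pointwise via the algebraic Newton inequality $S_k(\tilde\lambda|i)S_{k-2}(\tilde\lambda|i)\le S_{k-1}^2(\tilde\lambda|i)$ on the tangential eigenvalues. This avoids third derivatives entirely and keeps the argument at the level of integral identities plus one pointwise algebraic inequality, which is cleaner than the Riccati route you sketch.
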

where $H_m$ is the $m$-Hessian operator of the principal curvature $\kappa=(\kappa_1,\cdots,\kappa_{n-1})$ of $\p\Omega$.
	\begin{enumerate}[{(ii)}]
		\item
Assume $k=\frac{n}{2}$ and $b>\frac{n}{2}-1$. Let $u$ be the unique $C^{1,1}$ solution in Theorem \ref{main07203}. We have
\end{enumerate}
\begin{align}
	\int_{\p\Omega}|Du|^{b+1}H_{k-1}
	\le \int_{\p\Omega}|Du|^{b}H_{k}.
\end{align}
\begin{remark}
	When $k=1$, \eqref{0727geometric} was proved by Agostiniani- Mazzieri \cite{AM2020CVPDE}.
\end{remark}

In section 2, we give some preliminaries. In section 3, we solve the Dirichlet problem of degenerate $k$-Hessian equation in a ring domain. Section 4 is the main part of this paper. We show uniform $C^{1,1}$ estimate of the solution which is the limit of the solutions of nondegenerate $k$-Hessian equation. The key ingredient is to establish  uniform gradient estimates and uniform second order estimates. We use the idea of Chow-Wang\cite{cw2001cpam} to establish the uniform second order estimate.
Theorem \ref{main07201}, Theorem \ref{main07202} and Theorem \ref{main07203} will be proved in Section 5. In section 6, we prove an almost monotonicity formula along the level set of the approximating solution and thus prove Theorem \ref{geometric0725}.

\begin{align*}
	\end{align*}

	Part of  results in this paper has been reported by Xinan Ma at \emph{ 21w5139-Interaction Between Partial Differential Equations and
		Convex Geometry}  on  October 17th 2021
	and by Dekai Zhang at seminars at Xiamen University, on November 3th, 2021 and at Academy of Mathematics and Systems Science, CAS, on July 6th, 2022.
	
	Very recently (July 12th, 2022), when $k<\frac{n}{2}$, Xiao\cite{xiao2022} solved the exterior Dirichlet problem for the homogenous $k$-Hessian equations in which Xiao assumed the domain is strictly $(k-1)$-convex and  starshaped. For the case of $k<\frac{n}{2}$, our proof is different from Xiao's. We directly prove the uniform $C^2$ decay estimates for the approximating solutions.

\section{Preliminaries}
\subsection{k-convex solutions}
In this section we give the definition of $k$-convex functions and definition of $k$-convex solutions.

The $\Gamma_k$-cone is defined by
\begin{align}
	\Gamma_{k}:=\{\lambda\in \mathbb{R}^n|S_{i}(\lambda)>0, 1\le i\le k\}
\end{align}
Recall $S_k(\lambda):=\sum\limits_{1\le i_1< \cdots <i_k\le n}\lambda_{i_1}\cdots\lambda_{i_k}$.

One can find the concavity property of $S_k^{\frac{1}{k}}$  in \cite{CNSIII}.
\begin{lemma}\label{concavity}
	$S_k^{\frac{1}{k}}$ is a concave function in $\Gamma_k$. In particular, $\log S_k$ is concave in $\Gamma_k$.
	\end{lemma}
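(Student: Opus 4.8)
The plan is to prove the concavity of $S_k^{1/k}$ on the cone $\Gamma_k$ by induction on $k$, using the classical Newton-Maclaurin machinery together with a homogenization/linearization argument. First I would normalize: set $\sigma_k(\lambda) = S_k(\lambda)/\binom{n}{k}$ so that $\sigma_k$ is the normalized elementary symmetric polynomial, and recall the Newton-Maclaurin inequalities $\sigma_{k-1}\sigma_{k+1}\le \sigma_k^2$ valid on $\Gamma_k$, as well as the divided-difference identities $\sum_i \frac{\partial S_k}{\partial \lambda_i} = (n-k+1) S_{k-1}$ and $\sum_i \lambda_i \frac{\partial S_k}{\partial \lambda_i} = k S_k$ (Euler). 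The key structural fact I would invoke is that the functions $S_j$ are hyperbolic polynomials with respect to the direction $e=(1,\dots,1)$, and $\Gamma_k$ is (a component of) the corresponding hyperbolicity cone; concavity of $S_k^{1/k}$ on its hyperbolicity cone is then a general theorem (Gårding), but since the paper cites \cite{CNSIII} I would instead reproduce the elementary PDE-style argument.

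The main computation is the second-order one: fix $\lambda\in\Gamma_k$ and a direction $\xi\in\mathbb R^n$, and show that the Hessian of $f:=S_k^{1/k}$ satisfies $\sum_{i,j} f_{\lambda_i\lambda_j}\xi_i\xi_j \le 0$. Writing $f = S_k^{1/k}$ and differentiating twice gives
\begin{align*}
k S_k^{\frac{1}{k}} \sum_{i,j} f_{\lambda_i\lambda_j}\xi_i\xi_j
= S_k \sum_{i,j}\frac{\partial^2 S_k}{\partial\lambda_i\partial\lambda_j}\xi_i\xi_j
- \frac{k-1}{k}\Big(\sum_i \frac{\partial S_k}{\partial\lambda_i}\xi_i\Big)^2 ,
\end{align*}
so the claim reduces to the pointwise inequality
\begin{align*}
S_k \sum_{i,j}\frac{\partial^2 S_k}{\partial\lambda_i\partial\lambda_j}\xi_i\xi_j \le \frac{k-1}{k}\Big(\sum_i \frac{\partial S_k}{\partial\lambda_i}\xi_i\Big)^2
\end{align*}
for all $\lambda\in\Gamma_k$ and all $\xi$. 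This is where I expect the real work to be, and it is the step I would call the main obstacle. I would establish it by reducing to the case where $\lambda$ has distinct coordinates (by density) and diagonalizing, then expressing both sides in terms of $S_k$ restricted to the $(n-1)$- and $(n-2)$-variable subfamilies; the inequality then follows from the Newton-Maclaurin inequalities applied inductively. An alternative, cleaner route is to use the fact that along any line $t\mapsto \lambda + t\xi$ the one-variable polynomial $t\mapsto S_k(\lambda+t\xi)$ has only real roots when $\lambda\in\Gamma_k$ (hyperbolicity), and then the desired inequality is exactly the statement that $t\mapsto S_k(\lambda+t\xi)^{1/k}$ is concave between its extreme roots — a classical property of real-rooted polynomials (this is essentially the argument that $\big(\frac{P}{P^{(j)}}\big)$-type ratios behave well, going back to Gårding).

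Finally, the "in particular" assertion that $\log S_k$ is concave on $\Gamma_k$ follows immediately: $\log S_k = k \log(S_k^{1/k})$, and $\log$ is concave and increasing while $S_k^{1/k}$ is concave and positive on $\Gamma_k$, so the composition $\log \circ\, S_k^{1/k}$ is concave, hence so is $k$ times it. I would present this one-line deduction at the end. Throughout, the only inputs I need are (a) the Newton-Maclaurin inequalities on $\Gamma_k$, (b) hyperbolicity of $S_k$ in the direction $(1,\dots,1)$ with hyperbolicity cone $\Gamma_k$, and (c) elementary properties of real-rooted polynomials; all of these are standard and available in \cite{CNSIII}, so the proof is essentially a citation plus the short convexity-composition remark, and I would keep it correspondingly brief.
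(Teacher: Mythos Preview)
Your proposal is correct and in fact goes well beyond what the paper does: the paper gives no proof at all for this lemma, simply stating it and referring the reader to \cite{CNSIII} for the concavity of $S_k^{1/k}$. Your sketch (Hessian computation reduced to a Newton--Maclaurin-type inequality, or alternatively G{\aa}rding's hyperbolicity argument, followed by the one-line deduction of concavity of $\log S_k$ via composition with the increasing concave function $\log$) is precisely the standard argument behind that citation, so your approach is entirely consistent with the paper's---just more explicit.
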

For more properties of the $k$-Hessian operator, one can see the Lecture notes by Wang \cite{wang2009}.
We following the definition by Trudinger-Wang \cite{trudingerwang1997tmna} to give the definition of $k$-convex functions.
\begin{definition}
	Let $U$ be a domain in $\mathbb R^n$.
	
(1). A function $u\in C^2 (U)$ is called  \emph{$k$-convex} (\emph{strictly $k$-convex}) if $\lambda(D^2 u)\in \overline \Gamma_{k}$ \\($\lambda(D^2 u)\in\Gamma_k$).
	
(2). A function $u\in C^{0}(U)$ is called $k$-convex in $U$ if there exists a sequence of  functions $\{u_i\}\subset C^{2}(U)$ such that   in any bounded subdomain $V\subset\subset U$, $u_i$ is $k$-convex and
converges uniformly to $u$.
\end{definition}
\begin{definition}
Let $\Omega$ be a  bounded domain in ${\mathbb R}^n$ and $\varphi\in C^0(\p\Omega)$. A function $u\in C^0(\Omega^c)$ is called a $k$-convex solution of the homogeneous $k$-Hessian equation
	\begin{align}\label{hkhessian0718}
		\left\{\begin{aligned}
			S_k(D^2 u) =&0 \qquad\text{in} \quad \Omega^c:=R^n\setminus\overline\Omega,\\
			u=&\varphi \qquad\text{on} \quad \partial\Omega,
		\end{aligned}
		\right.
	\end{align}

 if there exists a sequence of $k$-convex functions $\{u_m\}\subset C^2(\Omega^c)$
 converging in $C^0(\Omega^c)$ to $u$ with  $\{S_{k}(D^2 u)\}$ converging in $L^1_{loc}(\Omega^c)$ to $0$ and $u=\varphi$ on $\p\Omega$.
\end{definition}
We need the following comparison principle by Wang-Trudinger\cite{trudingerwang1997tmna} (see also \cite{trudinger1997cpde, urbas1990indiana}) to prove the uniqueness of our equations.
\begin{lemma}\label{comparison0718}
	Let $u, v$ be $k$-convex functions in a bounded smooth domain $U$ in $\mathbb R^n$ satisfying
	\begin{align}
		\left\{\begin{aligned}
			S_{k}(D^2 u)\ge& S_{k}(D^2 v)\quad \text{in}\ U,\\
			u\le& v\ \quad\qquad\ \ \text{on}\ \p U,
			\end{aligned}
		\right.
		\end{align}
	in the viscosity sense. Then $u\le v$ in $U$.
	\end{lemma}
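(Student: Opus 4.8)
\emph{Proof strategy.} This is the classical comparison principle of Trudinger--Wang \cite{trudingerwang1997tmna}; the plan is to run the standard viscosity argument for degenerate Hessian-type operators, exploiting that for $k$-convex functions admissibility is built into the notion of sub/supersolution, so that $\lambda(X)\in\overline\Gamma_k$ whenever $(p,X)$ lies in the second-order superjet $\overline{J}^{2,+}u(x)$, and likewise for $v$. Reading the hypothesis as: there are continuous $g_1\ge g_2$ on $U$ with $u$ a viscosity subsolution of $S_k[\cdot]\ge g_1$ and $v$ a viscosity supersolution of $S_k[\cdot]\le g_2$ (in the application to Theorems \ref{main07201}--\ref{main07203} one has $g_1=g_2\equiv 0$), the first step is to pass to a strict subsolution. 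Set $C_0:=\sup_{\partial U}|y|^2$ and $u_\tau:=u+\tau(|x|^2-C_0)$ for $\tau>0$; then $u_\tau\le u\le v$ on $\partial U$, and if $\phi\in C^2$ touches $u_\tau$ from above at $x_0$ then $\phi-\tau|x|^2+\tau C_0$ touches $u$ from above there, so $\lambda(D^2\phi(x_0)-2\tau I)\in\overline\Gamma_k$ and $S_k(D^2\phi(x_0)-2\tau I)\ge g_1(x_0)$. Using $\tfrac{d}{dt}S_k(A+tI)=(n-k+1)S_{k-1}(A+tI)$ together with the elementary bound $S_{k-1}(\mu+t\mathbf 1)\ge\binom{n}{k-1}t^{k-1}$ for $\mu\in\overline\Gamma_{k}$, $t>0$ (cf. \cite{CNSIII,wang2009}), one gets $\lambda(D^2\phi(x_0))\in\Gamma_k$ and $S_k(D^2\phi(x_0))\ge g_1(x_0)+\sigma(\tau)$ with $\sigma(\tau):=\binom{n}{k}(2\tau)^k>0$ depending only on $n,k,\tau$. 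Hence it suffices to prove $u_\tau\le v$ in $U$ for each $\tau$ and then let $\tau\to 0$.

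Next I would argue by contradiction: assume $M:=\max_{\overline U}(u_\tau-v)>0$; since $u_\tau\le v$ on $\partial U$ this maximum is attained at an interior point. As neither $u$ nor $v$ is $C^2$, I would double variables: for $\delta>0$ maximize $\Phi_\delta(x,y):=u_\tau(x)-v(y)-\tfrac1{2\delta}|x-y|^2$ over $\overline U\times\overline U$ at a point $(x_\delta,y_\delta)$. The usual estimates give that $x_\delta,y_\delta$ are interior for $\delta$ small, that $\delta^{-1}|x_\delta-y_\delta|^2\to 0$, and that any subsequential limit of $(x_\delta,y_\delta)$ is $(x_*,x_*)$ with $x_*$ an interior maximum point of $u_\tau-v$. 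By the Crandall--Ishii theorem on sums there then exist symmetric matrices $X\le Y$ with $(p_\delta,X)\in\overline{J}^{2,+}u_\tau(x_\delta)$ and $(p_\delta,Y)\in\overline{J}^{2,-}v(y_\delta)$, where $p_\delta=\delta^{-1}(x_\delta-y_\delta)$.

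It remains to derive the contradiction from the structure of $S_k$ on the cone. By admissibility $\lambda(X)\in\overline\Gamma_k$; since $\overline\Gamma_k$ is monotone under addition of a nonnegative definite matrix (a standard property of $\Gamma_k$, which reduces to the fact that deleting one coordinate maps $\overline\Gamma_k$ into $\overline\Gamma_{k-1}$), also $\lambda(Y)=\lambda\big(X+(Y-X)\big)\in\overline\Gamma_k$. The sub/supersolution inequalities therefore give $S_k(X)\ge g_1(x_\delta)+\sigma(\tau)$ and $S_k(Y)\le g_2(y_\delta)$. On the other hand $\Gamma_k$ is a convex cone and $S_k^{1/k}$ is concave on it (Lemma \ref{concavity}); in particular the matrix $\big[\partial S_k/\partial A_{ij}\big]$ is positive semi-definite on $\overline\Gamma_k$ and the segment from $X$ to $Y$ stays in the set of matrices with eigenvalues in $\overline\Gamma_k$, so $t\mapsto S_k\big(X+t(Y-X)\big)$ is nondecreasing on $[0,1]$ and $S_k(X)\le S_k(Y)$. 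Combining, $g_1(x_\delta)+\sigma(\tau)\le S_k(X)\le S_k(Y)\le g_2(y_\delta)$, and letting $\delta\to 0$ gives $g_1(x_*)+\sigma(\tau)\le g_2(x_*)$, contradicting $g_1\ge g_2$ since $\sigma(\tau)>0$. This proves $u_\tau\le v$ in $U$, and $\tau\to 0$ yields $u\le v$.

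The step I expect to be most delicate is the viscosity bookkeeping — setting up the doubling correctly, invoking the theorem on sums, and, above all, checking that the regularization $u_\tau$ produces a strictly positive gap $\sigma(\tau)$ that is \emph{uniform} in the test function (this is exactly what the bound $S_{k-1}(\mu+t\mathbf 1)\ge\binom{n}{k-1}t^{k-1}$ provides); the convexity and monotonicity facts about $S_k$ on $\Gamma_k$ used at the end are soft and classical. A more hands-on alternative avoiding semijets would be the potential-theoretic proof of \cite{trudingerwang1997tmna}: compare the $k$-Hessian measures $\mu_k[u]$ and $\mu_k[v]$ on the open set $\{u<v\}$, using $u=v$ on its boundary, to reach the same contradiction.
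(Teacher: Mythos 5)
The paper does not prove Lemma \ref{comparison0718}; it simply cites Trudinger--Wang \cite{trudingerwang1997tmna} (with \cite{trudinger1997cpde} and \cite{urbas1990indiana}), so there is no in-text proof to measure yours against. Your argument is a correct and complete rendering of the standard viscosity comparison principle for degenerate $k$-Hessian operators, and all the key points are handled properly: the perturbation $u_\tau=u+\tau(|x|^2-C_0)$ produces a strict subsolution with a \emph{uniform} gap $\sigma(\tau)=\binom{n}{k}(2\tau)^k$, and the computation via $\frac{d}{dt}S_k(A+tI)=(n-k+1)S_{k-1}(A+tI)$ together with $S_{k-1}(\mu+t\mathbf 1)\ge\binom{n}{k-1}t^{k-1}$ for $\mu\in\overline\Gamma_{k-1}$ is exactly right; doubling variables and Crandall--Ishii give $X\le Y$ with $(p_\delta,X)\in\overline{J}^{2,+}u_\tau(x_\delta)$, $(p_\delta,Y)\in\overline{J}^{2,-}v(y_\delta)$; admissibility propagates from $X$ to $Y$ because $\overline\Gamma_k$ (as a matrix cone) is stable under adding a PSD matrix; and degenerate ellipticity of $S_k$ on $\overline\Gamma_k$ gives $S_k(X)\le S_k(Y)$, yielding the contradiction $g_1(x_*)+\sigma(\tau)\le g_2(x_*)$.

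Two small remarks on scope rather than correctness. First, you tacitly use the viscosity characterization of $k$-convexity ($\lambda(X)\in\overline\Gamma_k$ for every $X$ in the superjet), whereas the paper's Definition 2.3 is by local uniform approximation with $C^2$ $k$-convex functions; the equivalence is standard (it appears in \cite{trudingerwang1997tmna,urbas1990indiana}) but worth a one-line pointer if this is to be self-contained. Second, the primary reference \cite{trudingerwang1997tmna} actually proves comparison by a potential-theoretic route through $k$-Hessian measures $\mu_k[\cdot]$ (the alternative you sketch at the end), while the viscosity/semijet route you carry out in detail is closer in spirit to \cite{trudinger1997cpde,urbas1990indiana}. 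Both are valid; the viscosity proof is more elementary once the Crandall--Ishii machinery is granted, while the measure-theoretic proof is more robust for very weak ($L^1_{\mathrm{loc}}$-type) notions of solution such as the one in Definition 2.4 of this paper.
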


\subsection{The existence of the subsolution}
\begin{definition}
	A $C^2$ domain $U$ is called $(k-1)$-convex (strictly $(k-1)$-convex)  if for any $x\in \p U$, the principal curvature $\kappa:=(\kappa_1,\cdots,\kappa)$ of $\p U$ at $x\in \p U$ satisfies  $\kappa\in\overline\Gamma_k$  ($\kappa\in\Gamma_k$).
\end{definition}
Note that a $C^2$ domain $U$ is $(n-1)$-convex if and only if $U$ is convex.
\begin{definition}
	Let $U$ be a smoothly bounded domain in $\mathbb R^n$.  $\Phi$ is called a defining function of $U$ if $U=\{x:\Phi(x)<0\}$, $\Phi|_{\p U}=0$ and $|D\Phi||_{\p U}=1$.
	\end{definition}
 Caffarelli-Nirenberg-Spruck \cite{CNSIII} proved the following.
\begin{lemma}
	Let $U$ be a smoothly and strictly $(k-1)$-convex bounded domain in $\mathbb R^n$. There exists a smoothly and  strictly $k$-convex defining function $\Phi$ on $\overline U$.
	\end{lemma}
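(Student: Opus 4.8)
The plan is to construct $\Phi$ in two stages: first a strictly $k$-convex model near $\p U$, built from the signed distance function, which is where the hypothesis on the boundary gets used; then a globally defined strictly $k$-convex function on $\overline U$, obtained by gluing the model to a quadratic via a regularized maximum. For the local stage, let $\rho$ be the signed distance to $\p U$, taken negative inside $U$; since $\p U$ is smooth and compact, $\rho\in C^\infty$ on a collar $\{\mathrm{dist}(\cdot,\p U)<2\delta_0\}$ with $|D\rho|\equiv1$ there, and at $x\in\p U$ the eigenvalues of $D^2\rho(x)$ are $(\kappa_1,\dots,\kappa_{n-1},0)$, the last one being the normal direction and the others the principal curvatures of $\p U$ with respect to the inner normal. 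Fix $N_0>0$ and put $\Phi_0:=\rho+N_0\rho^2$, so that $D^2\Phi_0=(1+2N_0\rho)D^2\rho+2N_0\,D\rho\otimes D\rho$ has eigenvalues $(\kappa_1,\dots,\kappa_{n-1},2N_0)$ on $\p U$. Using the identity $S_j(\kappa_1,\dots,\kappa_{n-1},t)=S_j(\kappa)+tS_{j-1}(\kappa)$ together with strict $(k-1)$-convexity of $U$, which gives $S_j(\kappa)>0$ for $0\le j\le k-1$ and $S_k(\kappa)\ge0$, one finds $S_j(\kappa_1,\dots,\kappa_{n-1},2N_0)>0$ for $1\le j\le k$, i.e.\ $\lambda(D^2\Phi_0)\in\Gamma_k$ on $\p U$ (the quadratic term is needed only when $k=n$; for $k<n$ one may take $N_0=0$). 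By continuity and compactness of $\p U$ there is $\delta_1\in(0,\delta_0]$, chosen so small that also $1+2N_0\rho>0$ on $\{|\rho|\le2\delta_1\}$, such that $\Phi_0$ is strictly $k$-convex on the collar $\mathcal C:=\{-2\delta_1\le\rho\le0\}\cap\overline U$; there $\Phi_0=\rho(1+N_0\rho)<0$ on $\mathcal C\cap U$, while $\Phi_0=0$ and $D\Phi_0=D\rho$ on $\p U$.

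For the global stage, fix $x_0\in U$ and a small $\mu>0$, and set $\psi(x):=\mu(|x-x_0|^2-R^2)$, so $D^2\psi=2\mu I$ and $\psi$ is strictly $k$-convex on $\mathbb{R}^n$; by choosing $\mu$ small (so that the oscillation of $\psi$ on $\overline U$ is tiny) and $R$ suitably, $\psi$ can be made to take values in any prescribed closed subinterval of $(-2\delta_1,0)$ bounded away from both endpoints, in particular $\psi<0$ on $\overline U$. Let $M_\varepsilon(a,b)$ be a standard regularized maximum: smooth, convex, symmetric, nondecreasing in each slot, with $M_\varepsilon(a+t,b+t)=M_\varepsilon(a,b)+t$ and $M_\varepsilon(a,b)=\max(a,b)$ whenever $|a-b|\ge\varepsilon$. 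For $\varepsilon$ small relative to $\delta_1$, comparing the sizes of $\Phi_0$ and $\psi$ on $\mathcal C$ shows that $M_\varepsilon(\Phi_0,\psi)$ equals $\Phi_0$ on a one-sided neighborhood $\{-\tau\le\rho\le0\}$ of $\p U$ and equals $\psi$ on $\{-2\delta_1\le\rho\le-\delta_1\}$. Define $\Phi:=M_\varepsilon(\Phi_0,\psi)$ on $\mathcal C$ and $\Phi:=\psi$ on the part of $\overline U$ at distance $\ge\delta_1$ from $\p U$; these agree on the overlap, so $\Phi\in C^\infty(\overline U)$. Near $\p U$ one has $\Phi=\Phi_0$, hence $\Phi=0$ and $|D\Phi|=1$ on $\p U$; and $\Phi<0$ on $U$, since there $\max(\Phi_0,\psi)<0$ and $M_\varepsilon\le\max+\varepsilon$. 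Thus $\Phi$ is a defining function of $U$.

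It remains to check that $\Phi$ is strictly $k$-convex on all of $\overline U$. Where $\Phi=\psi$ this is immediate from $D^2\Phi=2\mu I$. On $\mathcal C$, differentiating $\Phi=M_\varepsilon(\Phi_0,\psi)$ and using convexity of $M_\varepsilon$ gives $D^2\Phi=(\p_aM_\varepsilon)D^2\Phi_0+(\p_bM_\varepsilon)D^2\psi+Q$ with $Q\succeq0$, $\p_aM_\varepsilon,\p_bM_\varepsilon\ge0$, and $\p_aM_\varepsilon+\p_bM_\varepsilon=1$; hence $D^2\Phi\succeq s\,D^2\Phi_0+(1-s)D^2\psi=:A$ for some $s\in[0,1]$. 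By Lemma \ref{concavity}, $S_j^{1/j}$ is concave on $\Gamma_j\supset\Gamma_k$ for $1\le j\le k$, so the set of symmetric matrices whose eigenvalues lie in $\Gamma_k$ is convex and therefore $\lambda(A)\in\Gamma_k$. Finally, $D^2\Phi\succeq A$ forces $\lambda_i(D^2\Phi)\ge\lambda_i(A)$ for all $i$ (in increasing order), and since each $S_j$ is increasing in every variable on $\Gamma_j$, this yields $\lambda(D^2\Phi)\in\Gamma_k$ as well, which completes the construction.

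I expect the main obstacle to be the gluing step: because the signed distance function is smooth only in a collar of $\p U$, one must insert a genuinely interior strictly $k$-convex function, and the constants $N_0,\delta_1,\varepsilon,\mu,R$ have to be chosen in the right order so that the regularized maximum still reproduces $\Phi_0$ sufficiently close to $\p U$ — preserving the normalization $\Phi|_{\p U}=0$, $|D\Phi||_{\p U}=1$ — yet already coincides with the interior quadratic before $\rho$ reaches the inner edge of the collar. By contrast, the boundary computation is routine once the identity $S_j(\kappa,t)=S_j(\kappa)+tS_{j-1}(\kappa)$ is combined with strict $(k-1)$-convexity, and the verification of strict $k$-convexity of the patched function is a soft consequence of the convexity of the admissible cone coming from Lemma \ref{concavity}.
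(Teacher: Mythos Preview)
Your proof is correct. The paper does not give its own proof of this lemma; it simply attributes the result to Caffarelli--Nirenberg--Spruck \cite{CNSIII}. Your construction follows the same two-stage strategy as CNS: a strictly $k$-convex local model built from the signed distance in a collar of $\partial U$, followed by a gluing to an interior strictly $k$-convex function via a regularized maximum (exactly the device recorded in the paper as Lemma~\ref{Guan2002}). The only visible difference is cosmetic: the CNS local model, as quoted later in the paper in the proof of Lemma~\ref{subu0720}, is the exponential $\Phi^{0}=t_0^{-1}\bigl(e^{-t_0 d(x)}-1\bigr)$ rather than your quadratic $\rho+N_0\rho^2$; both produce a normal eigenvalue that is large and positive on $\partial U$ while leaving the tangential eigenvalues equal to the principal curvatures, so the verification that $\lambda(D^2\Phi_0)\in\Gamma_k$ on $\partial U$ is identical.

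Two small remarks. First, by the paper's Definition, ``strictly $(k-1)$-convex'' means $\kappa\in\Gamma_k$, so in fact $S_k(\kappa)>0$ as well; your weaker reading ($S_k(\kappa)\ge 0$) still suffices, so nothing is lost. Second, your sentence ``$\Phi<0$ on $U$, since there $\max(\Phi_0,\psi)<0$ and $M_\varepsilon\le\max+\varepsilon$'' is slightly loose near $\partial U$, where $\max(\Phi_0,\psi)=\Phi_0$ is close to $0$; but there you already have $\Phi=\Phi_0<0$ exactly, and in the genuine gluing zone both $\Phi_0$ and $\psi$ sit near $-\delta_1/2$, so the bound $M_\varepsilon\le\max+\varepsilon$ does give $\Phi<0$ once $\varepsilon$ is small. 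These are matters of exposition, not gaps.
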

We need the following  Lemma by Guan\cite{gpf2002am} to construct the subsolution of the k-Hessian equation in a ring.
\begin{lemma}\label{Guan2002}
Suppose that $U$ is a bounded smooth domain in $\mathbb{R}^n$. For $h, g\in C^m(U)$, $m\ge 2$, for all $\delta>0$, there is an $H\in C^m(U)$ such that

\begin{enumerate}[(1)]
\item
 $H\ge \max\{h, g\}$ \ \text{and}

	\begin{align*}
 H(x)=\left\{ {\begin{array}{*{20}c}
		{h(x), \quad   \text{if } \ h(x)-g(x)>\delta  }, \\
		g(x) , \ \quad \text{if } \ g(x)-h(x)>\delta;\\
\end{array}} \right.
\end{align*}
\item
{There exists}  $|t(x)|\le 1 $ {such that}

\begin{align*}
\left\{H_{ij}(x)\right\}\ge
 \left\{\frac{1+t(x)}{2}g_{ij}+\frac{1-t(x)}{2}h_{ij}\right\},\ \text{for all} \ x\in\left\{|g-h|<\delta\right\}.
 \end{align*}
\end{enumerate}
	\end{lemma}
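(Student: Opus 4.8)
\emph{Proof proposal.} The plan is to realize $H$ as a smooth \emph{regularized maximum} of $h$ and $g$, obtained by composing the pair $(h,g)$ with a suitable one-variable profile.

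First I would fix an auxiliary function $\beta\in C^{\infty}(\mathbb R)$ that is convex, satisfies $0\le\beta'\le 1$ on $\mathbb R$, $\beta(t)=0$ for $t\le-\delta$, and $\beta(t)=t$ for $t\ge\delta$. Such a $\beta$ exists: pick a smooth nondecreasing $\phi:\mathbb R\to[0,1]$ with $\phi\equiv 0$ on $(-\infty,-\delta]$, $\phi\equiv 1$ on $[\delta,\infty)$, and $\phi(t)+\phi(-t)=1$, and set
\begin{align*}
\beta(t):=\int_{-\delta}^{t}\phi(s)\,ds .
\end{align*}
Then $\beta'=\phi$ is nondecreasing, so $\beta$ is convex; the symmetry $\phi(t)+\phi(-t)=1$ forces $\int_{-\delta}^{\delta}\phi=\delta$, which makes the prescription $\beta(t)=t$ for $t\ge\delta$ consistent with $\beta(t)=0$ for $t\le-\delta$. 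Moreover $\beta\ge 0$ (since $\beta$ is nondecreasing with $\beta(-\delta)=0$) and $\beta(s)\ge s$ for all $s$ (for $-\delta<s<\delta$ this follows from $\beta(\delta)=\delta$ and $\beta'\le1$), hence $\beta(s)\ge\max\{s,0\}$.

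Then I would set $H(x):=g(x)+\beta\bigl(h(x)-g(x)\bigr)$. Since $\beta\in C^{\infty}$ and $h,g\in C^{m}(U)$, we get $H\in C^{m}(U)$. Writing $s=h-g$, the bound $\beta(s)\ge\max\{s,0\}$ gives $H-h=\beta(s)-s\ge0$ and $H-g=\beta(s)\ge0$, so $H\ge\max\{h,g\}$; and if $h(x)-g(x)>\delta$ then $\beta(h-g)=h-g$, so $H(x)=h(x)$, while if $g(x)-h(x)>\delta$ then $\beta(h-g)=0$, so $H(x)=g(x)$. This is item (1). For item (2), differentiating twice gives
\begin{align*}
H_{ij}=\beta'(h-g)\,h_{ij}+\bigl(1-\beta'(h-g)\bigr)\,g_{ij}+\beta''(h-g)\,(h_i-g_i)(h_j-g_j),
\end{align*}
and since $\beta''\ge0$ the last term is a nonnegative matrix, so $\{H_{ij}(x)\}\ge\beta'\{h_{ij}\}+(1-\beta')\{g_{ij}\}$. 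Taking $t(x):=1-2\beta'\bigl(h(x)-g(x)\bigr)$, we have $|t(x)|\le1$ because $0\le\beta'\le1$, and the right-hand side equals $\frac{1-t}{2}\{h_{ij}\}+\frac{1+t}{2}\{g_{ij}\}$, which is exactly (2) — valid on all of $U$, in particular on $\{|g-h|<\delta\}$.

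There is essentially no hard step here; the only point requiring a little care is that the two one-sided prescriptions on $\beta$ must be compatible with the constraints $0\le\beta'\le1$ and convexity, which is precisely what the average-slope bookkeeping (the symmetry $\phi(t)+\phi(-t)=1$, giving $\int_{-\delta}^{\delta}\phi=\delta$) ensures. An essentially equivalent alternative is to take $H(x)=\int_{\mathbb R^2}\max\{h(x)+a,\,g(x)+b\}\,\rho(a,b)\,da\,db$ for an even mollifier $\rho$ supported in the ball of radius $\delta/4$; convexity of $\max$ together with Jensen's inequality yields $H\ge\max\{h,g\}$ and $H=h$ resp. $H=g$ on $\{h-g>\delta\}$ resp. $\{g-h>\delta\}$, and the same second-derivative computation gives (2).
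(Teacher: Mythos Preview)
Your proof is correct. Note, however, that the paper does not actually prove this lemma: it is quoted verbatim from P.~F.~Guan \cite{gpf2002am} and used as a black box, so there is no ``paper's own proof'' to compare against. Your construction $H=g+\beta(h-g)$ with $\beta$ a convex $C^\infty$ smoothing of $t\mapsto\max\{t,0\}$ (built from a centrally symmetric cutoff $\phi$ so that $\int_{-\delta}^{\delta}\phi=\delta$) is exactly the standard regularized-maximum construction that Guan uses; the Hessian computation and the identification $t(x)=1-2\beta'(h-g)$ are the expected ones. The mollification alternative you mention at the end is also standard and equivalent.
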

By Lemma \ref{concavity}, we see $H$ is  $k$-convex if $f$ and $g$ are both $k$-convex.
\begin{lemma}\label{subu0720}
	Let $\Omega_0$ and $\Omega_1$ be smoothly and strictly $(k-1)$-convex domain in $\mathbb R^n$ with $\Omega_0\subset\subset\Omega_1$ . Assume that $\Omega_0$ is convex. Then there exists a strictly $k$-convex function $\underline{u}\in C^{\infty}(\overline U)$ with $U:=\Omega_1\setminus\overline\Omega_0$ satisfying
\begin{align}\label{underlineu07191}
	\left\{\begin{aligned}
		S_{k}(D^2\underline u)\ge& \epsilon_0, \qquad  \text{in} \ \ \overline U, \\
		\underline{u}=&\mathrm{{\tau_0}}\Phi^0,\ \text{near} \ \ \partial \Omega_0,\\
		\underline{u}=&1+K_1\Phi^1, \ \text{near}\ \partial \Omega_1,
	\end{aligned}
	\right.
\end{align}
where $\Phi^{i}$ is the defining function of $\Omega_i$, $\tau_0$ and $K_1$ are uniform constants.
\end{lemma}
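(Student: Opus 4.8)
The plan is to build $\underline{u}$ by interpolating, via Lemma \ref{Guan2002}, between two explicitly constructed local strictly $k$-convex barriers: one attached to $\partial\Omega_0$ and one attached to $\partial\Omega_1$. First I would fix strictly $k$-convex defining functions $\Phi^0$ and $\Phi^1$ of $\Omega_0$ and $\Omega_1$ respectively, which exist by the Caffarelli–Nirenberg–Spruck lemma quoted above since both domains are smoothly and strictly $(k-1)$-convex. Near $\partial\Omega_0$ I would set $h:=\tau_0\Phi^0$; since $\Phi^0$ is strictly $k$-convex on $\overline{\Omega_1}$ (in particular on $\overline{U}$) and $S_k$ is $k$-homogeneous, $S_k(D^2 h)=\tau_0^k S_k(D^2\Phi^0)\ge c\,\tau_0^k$ on $\overline U$ for a uniform $c>0$; thus $h$ is strictly $k$-convex with $S_k(D^2h)$ as large as we like once $\tau_0$ is chosen large. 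The subtlety here is the sign/normalization: on $U$ we have $\Phi^0>0$, and the boundary value $\underline u=\tau_0\Phi^0$ on $\partial\Omega_0$ is then $0$, consistent with a barrier that must lie below any competitor; the strict $k$-convexity of $\Phi^0$ on the \emph{closed} set $\overline{U}$ (not merely near $\partial\Omega_0$) is what lets this piece be used globally.

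Next, near $\partial\Omega_1$ I would take $g:=1+K_1\Phi^1$. Again $\Phi^1$ strictly $k$-convex on $\overline{\Omega_1}\supset\overline U$ gives $S_k(D^2 g)=K_1^k S_k(D^2\Phi^1)\ge c' K_1^k$, so this piece is strictly $k$-convex with large $S_k$ for $K_1$ large, and it equals $1$ on $\partial\Omega_1$. The key comparison I must arrange is a separation estimate: I need $h(x)-g(x)>\delta$ near $\partial\Omega_0$ and $g(x)-h(x)>\delta$ near $\partial\Omega_1$, so that Lemma \ref{Guan2002}(1) forces $H=h$ in a neighborhood of $\partial\Omega_0$ and $H=g$ in a neighborhood of $\partial\Omega_1$, giving exactly the prescribed boundary behavior in \eqref{underlineu07191}. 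Near $\partial\Omega_0$ we have $g\approx 1$ while $h\approx 0$, so $h-g\approx -1<0$ — that is the \emph{wrong} sign. The fix is to choose the \emph{convexity} of $\Omega_0$ here: because $\Omega_0$ is assumed convex, I can replace the naive $\tau_0\Phi^0$ by a genuinely convex (hence $k$-convex for every $k$) function that is large and positive away from $\partial\Omega_0$; concretely, using convexity one builds a convex $h$ agreeing with $\tau_0\Phi^0$ in a thin collar of $\partial\Omega_0$ but growing so that $h>g+\delta$ on the rest of $U$. (Equivalently: pick $h$ convex with $h=\tau_0\Phi^0$ near $\partial\Omega_0$ and $h\ge 2$ on $\overline{U}\setminus(\text{collar})$.) Symmetrically, shrinking the collar and enlarging $K_1$ makes $g<h-\delta$ except in a neighborhood of $\partial\Omega_1$. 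With $\delta$ fixed small, Lemma \ref{Guan2002} then yields $H\in C^\infty(\overline U)$ with the two prescribed boundary forms.

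Finally I would verify $S_k(D^2 H)\ge\epsilon_0$ everywhere on $\overline U$. Off the transition region $\{|g-h|<\delta\}$ we have $H=h$ or $H=g$, each strictly $k$-convex with $S_k\ge\epsilon_0$ once $\tau_0,K_1$ are large. On $\{|g-h|<\delta\}$, Lemma \ref{Guan2002}(2) gives $\{H_{ij}\}\ge \tfrac{1+t}{2}g_{ij}+\tfrac{1-t}{2}h_{ij}$ for some $|t|\le1$; since $S_k$ is monotone in the matrix ordering on $\Gamma_k$ and, by Lemma \ref{concavity}, $S_k^{1/k}$ is concave, we get
\begin{align*}
S_k(D^2H)^{1/k}\ge S_k\!\left(\tfrac{1+t}{2}D^2g+\tfrac{1-t}{2}D^2h\right)^{1/k}\ge \tfrac{1+t}{2}S_k(D^2g)^{1/k}+\tfrac{1-t}{2}S_k(D^2h)^{1/k}\ge \epsilon_0^{1/k},
\end{align*}
so $H$ is strictly $k$-convex with $S_k(D^2H)\ge\epsilon_0$ on all of $\overline U$. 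Setting $\underline u:=H$ finishes the proof. The main obstacle is the bookkeeping in the previous paragraph: one must choose the collar widths, $\delta$, and the constants $\tau_0,K_1,\epsilon_0$ in the right order so that the separation inequalities and the lower bound on $S_k$ hold simultaneously, and one must genuinely use the convexity of $\Omega_0$ (not just its $(k-1)$-convexity) to get the sign of $h-g$ right near $\partial\Omega_0$ — that is the only place the hypothesis ``$\Omega_0$ convex'' is essential.
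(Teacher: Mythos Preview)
Your proposal has a genuine gap in where you deploy the convexity hypothesis, and relatedly you misdiagnose the separation estimate.

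The CNS lemma you cite produces a strictly $k$-convex defining function $\Phi^0$ only on $\overline{\Omega_0}$, i.e.\ on the \emph{interior} side of $\partial\Omega_0$. But you need $\tau_0\Phi^0$ to be strictly $k$-convex on $\overline U=\overline{\Omega_1}\setminus\Omega_0$, which lies entirely \emph{outside} $\Omega_0$; you assert this (``$\Phi^0$ is strictly $k$-convex on $\overline{\Omega_1}$'') without justification. In the paper this is exactly where the convexity of $\Omega_0$ is used: because $\Omega_0$ is convex, the distance $d(x)=\mathrm{dist}(x,\partial\Omega_0)$ is smooth on all of $\Omega_0^c$, and one takes $\Phi^0(x)=t_0^{-1}(e^{t_0 d(x)}-1)$ there; a direct check (using the strict $(k-1)$-convexity of $\partial\Omega_0$) gives $S_k(D^2\Phi^0)\ge\epsilon_0$ throughout $\Omega_0^c$. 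Without convexity no such global smooth $k$-convex extension is available.

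Second, the ``wrong sign'' you flag near $\partial\Omega_0$ is not there. You write $g\approx 1$ near $\partial\Omega_0$, but $g=1+K_1\Phi^1$ with $\Phi^1\le -c_0<0$ on any compact subset of $\Omega_1$ (in particular near $\partial\Omega_0$, since $\Omega_0\subset\subset\Omega_1$). Taking $K_1$ large makes $g\le 1-K_1c_0$ very negative there, so $h-g>\delta$ holds trivially with $h=\tau_0\Phi^0\ge 0$. The paper does precisely this: choose $K_1$ large first, then choose $\tau_0$ \emph{small} (not large, as you propose) so that near $\partial\Omega_1$ one has $g-h\ge 1-K_1\cdot\tfrac{t_1}{8}-\tau_0\sup_{\overline U}\Phi^0>\delta$. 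Your convexity-based ``fix'' (a convex $h$ growing large away from $\partial\Omega_0$) is therefore unnecessary, and as written would itself require a gluing step to remain smooth and equal to $\tau_0\Phi^0$ in the collar. Your final paragraph, using monotonicity and concavity of $S_k^{1/k}$ on the transition region, is correct and matches the paper.
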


\begin{proof}
	If $\Omega_0$ is $(k-1)$-convex and smooth,
Caffarelli-Nirenberg-Spruck \cite{CNSIII} constructed a  strictly $k$-convex  defining function $\Phi_0 \in C^{\infty}(\overline\Omega_0)$ satisfying
\begin{align}
	\left\{
	\begin{aligned}
	S_{k}(D^2 \Phi^0)\ge& \epsilon_0\quad \text{on}\ \overline\Omega_0,\\
	\Phi^0=&t_0^{-1}\left(e^{- t_0d(x)}-1\right) \ \text{near} \ \p\Omega_0,
	\end{aligned}
	\right.
	\end{align}
where $\epsilon_0, t_0$ are positive constants and $d(x)$ is the distance function from $x$ to $\p\Omega_0$.\\
 Since we also assume that $\Omega_0$ is convex, $d(x)$ is smooth in $\Omega_0^c$. Then  we can take $\Phi^{0}(x)=t_0^{-1}\left(e^{t_0d(x)}-1\right)$ for any $x\in \Omega_0^c$ and we still have
\begin{align}
	S_k(D^2 \Phi^0)\ge \epsilon_0 \ \text{in}\ \Omega_0^c.
\end{align}

Let $g=\tau_0\Phi^0$, $h=1+K_1\Phi^1$. By
 Lemma \ref{Guan2002} ( $\delta=\frac{1}{2}$), for $K_1>0$ sufficiently large, there exists a smooth function $\underline{u}$ satisfying \eqref{underlineu07191}. Indeed, define $\Omega_{t_1}=\{x\in \Omega_1: \Phi^{1}(x)<-t_1 \}$ with $t_1>0$. Then for $t_1$ small enough, $\Omega_0\subset\subset\Omega_{t_1}$ and  $\mathrm{dist}(\p\Omega_{t_1},\partial \Omega_0)>\frac{1}{2}\mathrm{dist}(\p\Omega_1,\partial \Omega_0)$. Let $\Omega_{\frac{t_1}{8}}=\{x\in \Omega_1: \Phi^{1}(x)<-\frac{t_1}{8} \}$.

For any $x\in \overline\Omega_{t_1}\setminus\Omega_0$, by choosing $K_1=2t_1^{-1}$ large enough, we have
$$g(x)-h(x)\ge-h(x)\ge -1+K_1t_1=1>\frac 12.$$
Then $\underline{u}=g=\tau_0\Phi^0$ in $\overline\Omega_{t_1}\setminus\Omega_{0}$.

For any $x\in \overline\Omega_1\setminus\Omega_{\frac{t_1}{8}}$, by choosing $\tau_0$  small enough, we have
$$h-g\ge 1-\frac{1}{4}-\tau_0|\Phi^0(x)|>\frac 12.$$
Then $\underline{u}=h=1+2t_1^{-1}\Phi^1$ in $\overline\Omega_1\setminus\Omega_{\frac{t_1}{8}}$. Moreover, by Lemma \ref{Guan2002}, $\underline{u}$ is strictly $k$-convex.
\end{proof}
\subsection{Level sets}
For any function $u$ on a domain $U$, we define the level set of $u$ with height $t$ as follows
\begin{align}
	S_t:=\{x\in U: u(x)=t\}.
\end{align}

Let $H_m(x)$ be the $m$-Hessian operator of the principal curvature $\kappa(x)=(\kappa_1,\cdots,\kappa_{n-1})$ of $x\in S_t$. 
We have the following useful formula which can be founded in \cite{bns2008arma}.
\begin{lemma}\label{07261}
	Let $u\in C^2(U)$ and $|Du|\neq 0$.  Then on $S_t$, for $1\le m\le n$, we have
	\begin{align*}
		H_{m-1}=&\frac{S_{m}^{ij}(D^2 u)u_iu_j}{|Du|^{m+1}},\\
		S_m(D^2u)=&H_m|Du|^{m}+S_{m}^{ij}u_iu_{l}u_{lj}|Du|^{-2},
	\end{align*}
where $S_m^{ij}:=\frac{\p S_k(D^2 u)}{\p u_{ij}}$ and the curvature is defined with respect to the upward normal as in \cite{bns2008arma}.
	In particular, if $u$ is $k$-convex (strictly $k$-convex), the level set $S_t$ is $(k-1)$-convex (strictly $k$-convex).
\end{lemma}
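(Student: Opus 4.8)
The plan is to reduce everything to pointwise linear algebra at a single point of the level set. Fix $t$ and $p\in S_t$; since $|Du|(p)\neq 0$, near $p$ the set $S_t$ is a smooth hypersurface with normal $\nu=Du/|Du|$ (the ``upward'' normal of \cite{bns2008arma}), and I would rotate coordinates so that at $p$ one has $u_\alpha(p)=0$ for $1\le\alpha\le n-1$ and $u_n(p)=|Du|(p)$, i.e. $\nu(p)=e_n$ and $T_pS_t=\mathrm{span}\{e_1,\dots,e_{n-1}\}$. Differentiating $\nu=Du/|Du|$ and pairing with a tangent vector annihilates the component along $\nu$, so the second fundamental form of $S_t$ at $p$ is the matrix $|Du|^{-1}(u_{\alpha\beta})_{1\le\alpha,\beta\le n-1}$ in the orthonormal basis $e_1,\dots,e_{n-1}$; hence the principal curvatures $\kappa=(\kappa_1,\dots,\kappa_{n-1})$ are its eigenvalues and, by homogeneity of the elementary symmetric functions, $H_m(\kappa)=|Du|^{-m}S_m\big((u_{\alpha\beta})_{1\le\alpha,\beta\le n-1}\big)$ for every $m$ (here $S_m$ of a matrix means $S_m$ of its eigenvalues).

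Next I would record two elementary identities for the $m$-Hessian operator on $n\times n$ matrices, with the convention $S_m^{ij}(A)=\partial S_m(A)/\partial a_{ij}$ (differentiate with the entries treated as independent, then specialize to $A=D^2u$; this is the convention that keeps the Euler relation $\sum_{ij}a_{ij}S_m^{ij}=mS_m$ free of spurious factors of $2$). Writing $S_m(A)=\sum_{|I|=m}\det(A_I)$ as a sum over principal $m\times m$ submatrices, and letting $A'$ be the $(n-1)\times(n-1)$ principal submatrix of $A$ on the indices $\{1,\dots,n-1\}$, these are: $(a)$ $S_m^{nn}(A)=S_{m-1}(A')$, which is the cofactor expansion of each $m$-minor containing the index $n$; and $(b)$ $\sum_j a_{nj}S_m^{nj}(A)=\sum_{|I|=m,\,n\in I}\det(A_I)$, which is the cofactor expansion along the $n$-th row of $A_I$ summed over all $I\ni n$.

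Combining: at $p$, using $u_i(p)=|Du|\,\delta_{in}$, identity $(a)$ and the first paragraph give $S_m^{ij}u_iu_j=|Du|^2S_m^{nn}(D^2u)=|Du|^2S_{m-1}(A')=|Du|^{m+1}H_{m-1}$, and since $p\in S_t$ was arbitrary this is the first formula. For the second, split $S_m(D^2u)=\sum_{n\notin I}\det(A_I)+\sum_{n\in I}\det(A_I)$ with $A=D^2u$: the first sum is $S_m(A')=|Du|^mH_m$ by the first paragraph, while by $(b)$ and $u_i(p)=|Du|\,\delta_{in}$ the second sum is $\sum_j u_{nj}S_m^{nj}=|Du|^{-2}S_m^{ij}u_iu_lu_{lj}$; arbitrariness of $p$ gives the identity.

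For the final assertion, if $u$ is $k$-convex then $\lambda(D^2u)\in\overline\Gamma_k$, and since $(S_m^{ij}(D^2u))$ is diagonalized by the same orthonormal frame as $D^2u$ with eigenvalues $\partial S_m/\partial\lambda_i=S_{m-1}(\lambda\mid i)$, the standard positivity $\lambda\in\Gamma_m\Rightarrow S_{m-1}(\lambda\mid i)>0$ together with the inclusions $\Gamma_k\subseteq\Gamma_m$ shows $(S_m^{ij}(D^2u))\ge 0$ for all $1\le m\le k$, with strict positivity if $u$ is strictly $k$-convex. Taking $\xi=Du$ in the first formula then gives $H_{m-1}(\kappa)=|Du|^{-(m+1)}S_m^{ij}u_iu_j\ge 0$ for $1\le m\le k$, i.e. $\kappa\in\overline\Gamma_{k-1}$ (resp. $\kappa\in\Gamma_{k-1}$), which is the $(k-1)$-convexity (resp. strict $(k-1)$-convexity) of $S_t$. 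I do not expect any analytic obstacle here beyond the standing hypotheses $u\in C^2$ and $|Du|\neq 0$: the whole content is pointwise linear algebra, and the only things one must be careful about are the two conventions flagged above — the differentiation convention for $S_m^{ij}$, and the sign and normalization of the second fundamental form with respect to the upward normal $\nu=Du/|Du|$ as in \cite{bns2008arma}.
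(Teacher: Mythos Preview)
Your proof is correct. The paper itself does not actually prove this lemma: it simply states the formulas and refers the reader to \cite{bns2008arma}. So there is nothing to compare against on the paper's side, and your self-contained argument is a genuine addition.

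The pointwise computation you give is the standard one and is carried out carefully. A couple of minor remarks: the definition $S_m^{ij}:=\partial S_k/\partial u_{ij}$ in the paper's statement is evidently a typo for $\partial S_m/\partial u_{ij}$, and you have correctly read it that way. Your handling of the differentiation convention (treating the entries as independent before specializing to the symmetric matrix $D^2u$) is exactly what is needed to make identity~(b), and hence the second displayed formula, come out without stray factors. Finally, in the ``in particular'' clause, the paper's parenthetical ``strictly $k$-convex'' for the level set is presumably another typo for ``strictly $(k-1)$-convex''; your argument proves the latter, which is the correct conclusion and is what the paper uses downstream.
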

\section{The Dirichlet problem for the homogeneous $k$-Hessian equations in the ring}
In this section, we prove the existence of the Dirichlet problem of degenerate $k$-Hessian equation in a smooth ring.
\begin{align}\label{EquaRing}
\left\{ {\begin{array}{*{20}c}
   {S_k(D^2 u)=0, \quad   \text{in} \quad U:=\Omega_1 \setminus\overline\Omega_0  }, \\
   {u=0 , \ \quad \text{on} \quad\partial\Omega_0}, \\
   {u=1 , \ \quad \text{on} \quad\partial \Omega_1}.  \\
\end{array}} \right.\end{align}
We assume that $\Omega_1$ is smoothly and strictly $(k-1)$-convex domain and $\Omega_0$ is a smoothly  strictly $(k-1)$-convex and convex domain.
 Using Lemma \ref{subu0720}, there exists a smoothly and strictly $k$-convex subsolution $\underline u$ satisfying
\begin{align}\label{underlineu071911}
	\left\{\begin{aligned}
		S_{k}(D^2\underline u)\ge& \epsilon_0, \qquad  \text{in} \ \ \overline U, \\
		\underline{u}=&\mathrm{{\tau_0}}\Phi^0,\ \text{near} \ \ \partial \Omega_0,\\
		\underline{u}=&1+K_1\Phi^1, \ \text{near}\ \partial \Omega_1,
	\end{aligned}
	\right.
\end{align}
where $\mathrm{{\tau_0}},\mathrm{{K_1}}$ are positive constants and $\Phi^i$ are  defining functions of $\Omega_i$.
\begin{theorem}\label{ring0720}
	Let $\Omega_0, \Omega_1$ be smooth $(k-1)$-convex domain and assume that $\Omega_0$ is convex.
There exists a unique solution $u\in C^{1,1}(\overline U)$ of the equation \eqref{EquaRing}.
\end{theorem}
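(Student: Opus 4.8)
The plan is to solve the degenerate equation \eqref{EquaRing} as a limit of non-degenerate approximating problems and then extract a uniform $C^{1,1}$ bound. First I would fix a small parameter $\varepsilon>0$ and consider
\begin{align*}
\left\{\begin{aligned}
S_k(D^2 u^\varepsilon)=&\varepsilon\qquad\text{in }U,\\
u^\varepsilon=&0\qquad\text{on }\partial\Omega_0,\\
u^\varepsilon=&1\qquad\text{on }\partial\Omega_1.
\end{aligned}\right.
\end{align*}
The subsolution $\underline u$ from Lemma \ref{subu0720} (shifted so that $\underline u\le 0$ on $\partial\Omega_0$, $\underline u\le 1$ on $\partial\Omega_1$, which the construction permits after scaling $\tau_0$ and the additive constant) satisfies $S_k(D^2\underline u)\ge\varepsilon_0\ge\varepsilon$ for $\varepsilon$ small; since $\Omega_1$ is strictly $(k-1)$-convex and $\Omega_0$ is convex hence $(k-1)$-convex, the boundary geometry is admissible. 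Hence by Guan's theorem \cite{guan1994cpde} (existence from a subsolution, no geometric restriction on the outer domain beyond admissibility of the boundary data) there is a unique $k$-convex solution $u^\varepsilon\in C^\infty(\overline U)$, with $\underline u\le u^\varepsilon\le 1$ by the comparison principle Lemma \ref{comparison0718} (comparing against the explicit supersolution $\equiv 1$ and the subsolution $\underline u$).

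Next I would establish estimates uniform in $\varepsilon$. The $C^0$ bound is immediate: $\underline u\le u^\varepsilon\le 1$. For the gradient, interior bounds follow from the a priori estimates for $k$-Hessian equations (e.g. Chou--Wang type estimates, \cite{cw2001cpam}), and boundary gradient bounds come from barriers: on $\partial\Omega_0$ the subsolution $\underline u$ and the linear-in-$\Phi^1$ function give lower and upper barriers, and similarly on $\partial\Omega_1$, using that $\Phi^0,\Phi^1$ are strictly $k$-convex defining functions. For the second-order estimate, the classical strategy is the interior estimate together with a boundary estimate; the double-normal derivative on the boundary is controlled via $S_k(D^2u^\varepsilon)=\varepsilon$ once the tangential and mixed derivatives are bounded by barrier arguments. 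This yields $\|u^\varepsilon\|_{C^{1,1}(\overline U)}\le C$ with $C$ independent of $\varepsilon$. Then, along a subsequence $\varepsilon\to 0$, $u^\varepsilon\to u$ in $C^{1,\alpha}(\overline U)$ for every $\alpha<1$ and weakly-$*$ in $W^{2,\infty}$; the limit $u$ is $k$-convex (the closure of $\overline\Gamma_k$ is preserved under the $C^{1,\alpha}$ and a.e.-Hessian limit), satisfies the boundary conditions, and $S_k(D^2u^\varepsilon)=\varepsilon\to 0$ in $L^1_{loc}$, so $u$ is a $k$-convex solution of \eqref{EquaRing} in the sense of the definition above, with $u\in C^{1,1}(\overline U)$. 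Uniqueness is then immediate from the comparison principle Lemma \ref{comparison0718}: two such solutions each lie above and below the other on $\partial U$ and have equal (zero) right-hand sides, so they coincide.

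The main obstacle is the uniform second-order estimate for $u^\varepsilon$, and within it the boundary estimate: one must bound $|D^2u^\varepsilon|$ on $\partial\Omega_0$ and $\partial\Omega_1$ independently of $\varepsilon$, using only the strict $(k-1)$-convexity of the boundaries and the convexity of $\Omega_0$. The interior estimate, while technical, is standard for $k$-Hessian equations with bounded positive right-hand side and $C^{1,1}$ boundary data bounded uniformly; the delicate point is that as $\varepsilon\to 0$ the equation degenerates, so the constant in the interior estimate must be shown not to blow up — this is where the concavity of $S_k^{1/k}$ (Lemma \ref{concavity}) and a careful choice of auxiliary function (following \cite{cw2001cpam}) enter. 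I expect the boundary double-normal bound to be the most delicate single ingredient, handled by constructing barriers of the form $u^\varepsilon\pm A\Phi^i\mp B|x-x_0|^2$ adapted to each portion of $\partial U$.
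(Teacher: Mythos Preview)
Your overall architecture---approximate by $S_k(D^2u^\varepsilon)=\varepsilon$, invoke Guan's existence via the subsolution $\underline u$, prove uniform $C^{1,1}$ bounds, pass to the limit, and apply Lemma~\ref{comparison0718} for uniqueness---matches the paper exactly. However, you take an unnecessarily hard route on the interior estimates and leave yourself with a genuine difficulty that the paper's argument sidesteps entirely.

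The key simplification you miss is that the right-hand side of the approximating equation is a \emph{constant}. Writing $F^{ij}=\partial_{u_{ij}}\log S_k(D^2u^\varepsilon)$, one has $F^{ij}(u^\varepsilon_\xi)_{ij}=0$ for every fixed direction $\xi$, so $\max_{\overline U}|Du^\varepsilon|=\max_{\partial U}|Du^\varepsilon|$; and by concavity $F^{ij}(u^\varepsilon_{\xi\xi})_{ij}\ge 0$, so $\max_{\overline U}u^\varepsilon_{\xi\xi}\le\max_{\partial U}u^\varepsilon_{\xi\xi}$. Thus \emph{no interior estimate is needed at all}, and your worry that ``the constant in the interior estimate must be shown not to blow up'' as $\varepsilon\to 0$ simply evaporates. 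This is important, because the Chou--Wang type interior $C^2$ estimates you invoke typically depend on a positive lower bound for the right-hand side, and making them uniform as $\varepsilon\to 0$ is exactly the degeneracy you flagged but did not resolve.

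On the boundary, the paper's $C^0$ upper barrier is not the constant $1$ but the harmonic function $h$ with the same boundary values, which also gives the gradient barrier on $\partial\Omega_0$. For the tangential-normal estimate the paper uses Guan's barrier $w=A_1(u^\varepsilon-\underline u)+A_2|x|^2\pm T_\alpha u^\varepsilon$, exploiting the concavity of $\log S_k$ and the strict inequality $S_k(D^2\underline u)\ge\varepsilon_0>\varepsilon$ to get $F^{ij}(u^\varepsilon-\underline u)_{ij}\le -2\tau_0\mathcal F$; your proposed barriers built from $\Phi^i$ alone would not obviously produce this differential inequality. For the double-normal bound the essential point is a uniform lower bound $S_{k-1}(u^\varepsilon_{\alpha\beta}(0))\ge c_0>0$ on both components of $\partial U$: on $\partial\Omega_1$ this comes from $u^\varepsilon_{\alpha\beta}=|Du^\varepsilon|\rho_{\alpha\beta}$ and strict $(k-1)$-convexity of $\Omega_1$; on $\partial\Omega_0$ it comes from writing $u^\varepsilon=g\,\underline u$ with $g\ge 1$ and using the strict $k$-convexity of $\underline u=\tau_0\Phi^0$ there.
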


The uniqueness follows from the classical comparison theorem for $k$-convex solutions of $k$-Hessian equations. Next, we prove the existence and regularity of $k$-convex solution by approximation. Indeed, for every $0<\epsilon<\epsilon_0$, we consider the following problem
\begin{align}\label{ApprEquaRing3}
\left\{ {\begin{array}{*{20}c}
   {S_k(D^2 u^{\epsilon})=\epsilon, \quad   \text{in} \quad\Omega  }, \\
   {u^\epsilon=0 , \ \quad \text{on} \quad\partial\Omega_0}, \\
   {u^\epsilon=1 , \ \quad \text{on} \quad\partial \Omega_1}.  \\
\end{array}} \right.\end{align}

Since $u^{\epsilon}$ is a subsolution, by Guan \cite{guan1994cpde}, the above problem has a unique smooth solution $u^\epsilon$.

Next, we want to show the $C^{1,1}$  estimates are independent of $\epsilon$. Firstly, by maximum principal, $u^{\epsilon_1}\ge u^{\epsilon_2}$ for any $\epsilon_1\le \epsilon_2$. Thus $u^{0}:=\lim\limits_{\epsilon\rightarrow\infty}u^{\epsilon}$ exists. If we could prove uniform $C^{1,1}$ estimates, then   $u^{0}$ is the $C^{1,1}$ solution of  equation \eqref{EquaRing}.
\begin{theorem}
	Let $u^{\epsilon}$ be the smooth $k$-convex solution of \eqref{ApprEquaRing3}. Then there exists a uniform constant $C$ independent of $\varepsilon$ such that
	\begin{align*}
		|u^{\varepsilon}|_{C^{1,1}(\overline U)}\le& C.
	\end{align*}
\end{theorem}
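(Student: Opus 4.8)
The plan is to establish the uniform $C^{1,1}$ estimate for $u^\varepsilon$ in three stages: $C^0$, $C^1$, and $C^2$, at each stage producing a bound depending only on the data $(\Omega_0,\Omega_1,n,k,\epsilon_0)$ but not on $\varepsilon\in(0,\epsilon_0)$. For the $C^0$ bound, I would use the comparison principle (Lemma~\ref{comparison0718}) together with the subsolution $\underline u$ from \eqref{underlineu071911} as a lower barrier and a simple $k$-convex supersolution (for instance a suitable function of $\Phi^1$, or the harmonic-type comparison $u\le 1$, $u\ge 0$ already following from the boundary data and $k$-convexity via the maximum principle) as an upper barrier; since $0\le u^\varepsilon\le 1$ and $\underline u\le u^\varepsilon$, the zeroth order bound is immediate and $\varepsilon$-free.

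For the gradient estimate, the interior bound follows from the concavity of $S_k^{1/k}$ (Lemma~\ref{concavity}) and the standard Bernstein technique for $k$-Hessian equations applied to the non-degenerate equation $S_k(D^2u^\varepsilon)=\varepsilon$, with the right-hand side bounded above by $\epsilon_0$; the resulting constant depends on $\sup|u^\varepsilon|$ and the equation but not on $\varepsilon$. The boundary gradient estimate is where the subsolution $\underline u$ is essential: on $\partial\Omega_0$ and $\partial\Omega_1$ one has $\underline u\le u^\varepsilon\le 1$ (and matching values on the boundary after adjusting constants), so $\underline u$ serves as a lower barrier pinned at the boundary values, while the linear-in-defining-function upper barriers $1+K\Phi^i$ (which are $k$-convex for $K$ large since $\Phi^i$ is strictly $k$-convex, hence supersolutions once $S_k$ of them exceeds $\epsilon_0$) give the matching upper bound; squeezing $u^\varepsilon$ between barriers that agree on $\partial U$ yields a uniform bound on the normal derivative, and combined with the known tangential derivative (the boundary data is constant) this controls $|Du^\varepsilon|$ on $\partial U$. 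Together with the interior estimate this gives $\sup_{\overline U}|Du^\varepsilon|\le C$ uniformly.

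For the second order estimate I would follow the approach attributed to Chow--Wang in the introduction: the key is a uniform \emph{interior} bound on $|D^2u^\varepsilon|$ that degenerates gracefully, plus a uniform boundary estimate for $D^2u^\varepsilon$ obtained again by barrier constructions using $\underline u$ and the strictly $k$-convex defining functions near $\partial\Omega_0$ and $\partial\Omega_1$. Concretely, the tangential-tangential second derivatives on the boundary are controlled by the geometry of $\partial\Omega_i$ and the $C^1$ bound, the mixed tangential-normal derivatives by a barrier argument differentiating the equation in tangential directions, and the double-normal derivative by using the equation itself together with the strict $(k-1)$-convexity of $\partial\Omega_i$ to bound $u^\varepsilon_{\nu\nu}$ from above (the lower bound being automatic from $k$-convexity). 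The global bound then comes from a maximum principle argument applied to an auxiliary function of the form $\log(\text{largest eigenvalue of }D^2u^\varepsilon)$ plus lower-order corrections, using the concavity of $\log S_k$ (Lemma~\ref{concavity}) and the already-established $C^1$ bound; because $\varepsilon\le\epsilon_0$ the terms involving the right-hand side are uniformly controlled.

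I expect the main obstacle to be the uniform interior $C^2$ estimate as $\varepsilon\to 0$: the equation becomes degenerate, so one cannot invoke standard non-degenerate interior regularity directly, and the auxiliary-function maximum principle must be arranged so that no constant blows up as $\varepsilon\to 0$. This is precisely where the Chow--Wang device enters—choosing the test function and handling the third-order terms via the concavity inequality so that the bad terms are absorbed uniformly. Once the uniform $C^{1,1}$ bound is in hand, the limit $u^0=\lim_{\varepsilon\to0}u^\varepsilon$ inherits the estimate and, by the discussion preceding the statement, solves \eqref{EquaRing} in the $k$-convex sense, with uniqueness from Lemma~\ref{comparison0718}.
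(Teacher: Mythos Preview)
Your overall architecture is right, but there is one genuine error and one significant missed simplification. The error is in your upper barrier for the boundary gradient estimate: you claim that $1+K\Phi^i$ becomes a \emph{supersolution} once $S_k$ of it exceeds $\epsilon_0$, but that is exactly backwards---if $S_k(D^2(1+K\Phi^i))>\epsilon_0>\varepsilon$ then it is a \emph{sub}solution, and comparison gives $u^\varepsilon\ge 1+K\Phi^i$, not $\le$. (Indeed, near $\partial\Omega_1$ the subsolution $\underline u$ already equals $1+K_1\Phi^1$.) The paper instead uses the harmonic function $h$ with the same boundary data as an upper barrier: since any $k$-convex function is subharmonic ($S_1>0$), the ordinary maximum principle for $\Delta$ gives $u^\varepsilon\le h$, and this pins down the normal derivative from the other side. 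Without a correct upper barrier your boundary gradient bound, and hence everything downstream, does not close.

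The missed simplification is that the right-hand side here is the \emph{constant} $\varepsilon$, so no Bernstein or Chow--Wang machinery is needed for the interior estimates. Differentiating $\log S_k(D^2u^\varepsilon)=\log\varepsilon$ once gives $F^{ij}(u_\xi)_{ij}=0$, so $|Du^\varepsilon|$ attains its maximum on $\partial U$; differentiating twice and using concavity of $\log S_k$ gives $F^{ij}(u_{\xi\xi})_{ij}\ge 0$, so $u_{\xi\xi}$ also attains its maximum on $\partial U$. Thus both $C^1$ and $C^2$ reduce \emph{immediately} to boundary estimates, and your worry about the ``main obstacle'' of a uniform interior $C^2$ bound as $\varepsilon\to 0$ simply does not arise in this ring problem. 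The paper reserves the Chow--Wang argument for Section~4, where the right-hand side $f^{i,\varepsilon}$ is nonconstant and one genuinely needs a weighted test function to get decay in $|x|$. Your boundary $C^2$ outline (tangential from constant boundary data, tangential--normal via a Guan-type barrier $A_1(u-\underline u)+A_2|x|^2\pm T_\alpha u$, double-normal from the equation and $S_{k-1}(u_{\alpha\beta})\ge c_0$ coming from strict $(k-1)$-convexity) matches the paper and is correct; the key $\varepsilon$-independence in the tangential--normal step comes from $\log\varepsilon-\log(\epsilon_0/2)<0$ for $\varepsilon<\epsilon_0/2$.
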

\textbf{In the following subsections, for simplicity, we use $u$ instead of $u^{\epsilon}$.}
\subsection{$C^1$-estimates}
\begin{lemma}
There exists a uniform constant $C$ such that
\begin{align}
	|u|_{C^1(\overline U)}\le C.
	\end{align}
\end{lemma}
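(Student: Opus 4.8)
The plan is to obtain the bound in two stages: first an $L^\infty$ (i.e. $C^0$) bound, then a gradient bound which reduces—by $k$-convexity—to a boundary gradient bound, and there we use barriers. Since $u^\epsilon$ is $k$-convex and $S_k(D^2 u^\epsilon)=\epsilon>0$ in $U$ with $u^\epsilon=0$ on $\partial\Omega_0$, $u^\epsilon=1$ on $\partial\Omega_1$, the comparison principle (Lemma \ref{comparison0718}) gives $0\le u^\epsilon\le 1$ at once, by comparing with the constants $0$ and $1$ (which are $k$-convex and have $S_k=0\le\epsilon$, so $u^\epsilon$ lies between them). Hence $|u^\epsilon|_{C^0(\overline U)}\le 1$, uniformly in $\epsilon$.

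For the gradient, the first step is to note that a $k$-convex function with $k\ge1$ is in particular subharmonic (since $\lambda\in\overline\Gamma_k$ implies $S_1(\lambda)=\Delta u^\epsilon\ge0$), so $u^\epsilon$ has no interior maximum and, more usefully, the maximum of $|Du^\epsilon|$ over $\overline U$ is controlled by its maximum over $\partial U=\partial\Omega_0\cup\partial\Omega_1$ together with the $C^0$ bound—this is the standard fact that for solutions of uniformly elliptic (here, linearized-elliptic) equations of this type one differentiates the equation and applies the maximum principle to $|Du^\epsilon|^2$, or more simply one uses that each directional derivative $u^\epsilon_\xi$ satisfies $S_k^{ij}(D^2u^\epsilon)\,\partial_{ij}u^\epsilon_\xi=0$ and hence obeys the maximum principle. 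So it suffices to bound $|Du^\epsilon|$ on $\partial\Omega_0$ and $\partial\Omega_1$.

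The boundary gradient estimate is the main point, and it is done by constructing upper and lower barriers at each boundary component. For the upper barrier we use the strictly $k$-convex subsolution $\underline u$ from Lemma \ref{subu0720} (with $\Omega_0,\Omega_1$ here), which satisfies $S_k(D^2\underline u)\ge\epsilon_0>\epsilon$, agrees with $u^\epsilon$ on $\partial U$ after rescaling (it equals $\tau_0\Phi^0$ near $\partial\Omega_0$ and $1+K_1\Phi^1$ near $\partial\Omega_1$, so suitable affine combinations match the boundary data $0$ and $1$); comparison then gives $\underline u\le u^\epsilon$, which, since they agree on $\partial U$, yields the lower bound on the inner normal derivative. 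For the opposite inequality one builds a $k$-convex supersolution: near $\partial\Omega_1$ take something like $1 + A(e^{L\Phi^1}-1)$ with $\Phi^1$ the defining function and $L$ large (so the $D^2$ of this function is dominated in the $\Gamma_k$-sense by $AL^2 D\Phi^1\otimes D\Phi^1$ plus the Hessian term, making $S_k$ small, in fact $\le\epsilon$ is not what we want—rather we want $S_k$ large there so it is a supersolution in the comparison sense; concretely one uses the strict $(k-1)$-convexity of $\partial\Omega_1$ so that $\Phi^1$ itself is strictly $k$-convex and a steep exponential of it is a supersolution), dominating $u^\epsilon$ from above and matching on $\partial\Omega_1$; similarly near $\partial\Omega_0$, using that $\Omega_0$ is convex so the distance function is smooth outside and $e^{t_0 d}-1$ type barriers work. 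The hard part is making these barriers genuinely $k$-convex with the correct one-sided $S_k$-inequality uniformly in $\epsilon$ near the non-convex-from-outside component $\partial\Omega_0$ and near $\partial\Omega_1$; but this is exactly the content already extracted in Lemma \ref{subu0720} and in the Caffarelli–Nirenberg–Spruck defining-function construction cited above, so one assembles the barriers from those ingredients, matches boundary values by linear scaling, and applies Lemma \ref{comparison0718} on a thin collar neighborhood of each boundary piece (using the already-established $C^0$ bound to control the barrier on the inner boundary of the collar). Combining the two-sided normal-derivative bounds on $\partial U$ with the interior maximum principle for $|Du^\epsilon|$ gives $|Du^\epsilon|_{C^0(\overline U)}\le C$ with $C$ independent of $\epsilon$, completing the $C^1$-estimate.
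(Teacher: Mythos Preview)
Your overall architecture is right and matches the paper: get a $C^0$ bound, reduce the interior gradient to the boundary via $F^{ij}\partial_{ij}u_\xi=0$, then estimate $|Du|$ on $\partial U$ by barriers. The subsolution $\underline u$ from Lemma~\ref{subu0720} already satisfies $\underline u=0$ on $\partial\Omega_0$ and $\underline u=1$ on $\partial\Omega_1$ (since $\Phi^0,\Phi^1$ vanish there), so no ``affine rescaling'' is needed; the comparison $\underline u\le u^\epsilon$ is immediate and gives one side of the boundary normal derivative, just as you say.

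The genuine gap is your upper barrier. Your parenthetical reasoning reverses the sign: a supersolution $v$ for the comparison $u^\epsilon\le v$ must satisfy $S_k(D^2v)\le\epsilon$, not $S_k$ large. Moreover, the $k$-Hessian comparison principle in Lemma~\ref{comparison0718} requires \emph{both} functions to be $k$-convex, so building a $k$-convex function with $S_k$ uniformly small is delicate and your collar construction, as sketched, does not obviously produce one. The paper avoids all of this with a one-line observation you actually already made but did not exploit: since $u^\epsilon$ is $k$-convex it is subharmonic, $\Delta u^\epsilon\ge0$. Let $h$ solve $\Delta h=0$ in $U$ with $h=0$ on $\partial\Omega_0$, $h=1$ on $\partial\Omega_1$. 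Then $\Delta(u^\epsilon-h)\ge0$ with $u^\epsilon-h=0$ on $\partial U$, so the classical maximum principle gives $u^\epsilon\le h$. This single harmonic function is a global upper barrier and immediately yields the missing inequality on the normal derivative at both boundary components: $h_\nu\le u^\epsilon_\nu\le\underline u_\nu$ on $\partial\Omega_0$ and $h_\nu\ge u^\epsilon_\nu\ge\underline u_\nu$ on $\partial\Omega_1$. (This also cleans up your $C^0$ step: $\underline u\le u^\epsilon\le h$ gives the $L^\infty$ bound directly; comparing with the constant $0$ from below does not work since $0$ is not a subsolution when $\epsilon>0$.)
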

\begin{proof}
Let $h$ be the unique solution of the problem
\begin{align}\label{mainequation}
\left\{ {\begin{array}{*{20}c}
   { \Delta h=0,\quad \text{in} \quad U }, \\
   {\ h=0, \ \quad  \text{on} \quad\partial\Omega_0}, \\
   {\ h=1, \ \quad \text{on} \quad\partial \Omega_1}.  \\
\end{array}} \right.\end{align}
By the maximal principle, we have $\underline{u}\le u\le h$. This gives the uniform $C^{0}$ estimates.

Let $F^{ij}:=\frac{\partial }{\partial u_{ij}}\log S_k(D^2 u)$.
Since $F^{ij}(u_{\xi})_{ij}=0$
for any unit constant vector $\xi$, we have $\max\limits_{\overline U}|Du|
=\max\limits_{\partial U}|Du|.$
Since $\underline{u}\le u^{\varepsilon}\le h$ in $U$ and $\underline{u}= u^{\varepsilon}= h$ on $\p U$, we have
\begin{align*}
 h_{\nu}\le u_{\nu}\le& \underline{u}_{\nu}<0, \ \text{on}\ \partial\Omega_0\\
 h_{\nu}\ge u_{\nu}\ge& \underline{u}_{\nu}>0, \ \text{on}\ \partial\Omega_1,
\end{align*}
where $\nu$ is the unit normal vector of $\partial U$ (inner normal vector of $\p\Omega_0$).
Thus we have
\begin{align}
	\max\limits_{\overline U}|Du|
=\max\limits_{\partial U}|Du|\le C.
\end{align}
\end{proof}
\subsection{Second order estimates}
\begin{lemma}
	There exists a uniform constant $C$ such that
	\begin{align}
		\max_{\overline U}|D^2 u|\le C.
	\end{align}
\end{lemma}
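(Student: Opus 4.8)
The plan is to establish the global $C^2$ bound by combining an interior second-order estimate with a boundary second-order estimate, since for $k$-Hessian equations the maximum of $|D^2u|$ on a bounded domain is controlled by its boundary values once the equation is uniformly elliptic (which it is here, because $S_k(D^2u)=\epsilon\in(0,\epsilon_0)$ forces $\lambda(D^2u)\in\Gamma_k$ and, together with the subsolution $\underline u$ and the gradient bound, keeps the equation non-degenerate along the relevant directions). More precisely, since $u$ is $k$-convex and solves $S_k=\epsilon$, the largest eigenvalue $\lambda_{\max}(D^2u)$ is comparable to $|D^2u|$, and a tangential-derivative maximum principle argument reduces the global estimate to controlling $u_{\nu\nu}$ on $\partial U$.

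\textbf{Interior estimate.} First I would differentiate the equation $\log S_k(D^2u)=\log\epsilon$ twice in a direction $\xi$ and apply the maximum principle to an auxiliary function of the form $w=\log\lambda_{\max}(D^2u)+\phi(|Du|^2)+A\underline u$ (or the classical $w=\eta\,\Delta u$ cutoff version, following Chou–Wang), using the concavity of $\log S_k$ from Lemma~\ref{concavity} to absorb the third-order terms. Because $F^{ij}$ is positive definite and, on the set where $\lambda_{\max}$ is large, $F^{ij}$ has a definite positive lower bound in the bad directions (a standard consequence of $\lambda\in\Gamma_k$ and $S_k$ bounded), the good negative term from the concavity inequality dominates, and the strictly $k$-convex subsolution $\underline u$ with $S_k(D^2\underline u)\ge\epsilon_0>\epsilon$ supplies a uniformly positive term $F^{ij}\underline u_{ij}\ge c>0$ to close the argument. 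This yields $\max_{\overline U}|D^2u|\le C(1+\max_{\partial U}|D^2u|)$.

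\textbf{Boundary estimate.} On $\partial U=\partial\Omega_0\cup\partial\Omega_1$ I would split $D^2u$ into tangential-tangential, tangential-normal, and normal-normal parts. The pure tangential part is controlled directly by differentiating the boundary condition ($u$ is constant on each component), giving $|u_{TT}|\le C$ up to curvature terms of $\partial\Omega_i$. The mixed tangential-normal derivatives are handled by a barrier argument: using $\underline u\le u\le h$ with equality on $\partial U$, one builds a barrier of the form $\pm\partial_T u + A\Phi^i + B\Phi^i{}^2$ (linearized operator applied, using strict $k$-convexity of $\Phi^i$ and of $\underline u$) and applies the comparison principle near the boundary. Finally, the normal-normal derivative $u_{\nu\nu}$ is estimated from the equation itself: expanding $S_k(D^2u)=\epsilon$ along the boundary frame and using that the tangential minor of $D^2u$ is nondegenerate (forced by strict $(k-1)$-convexity of $\partial\Omega_i$ and the lower bound on the gradient $|u_\nu|>0$ coming from the Hopf lemma applied to $\underline u\le u\le h$), one solves for $u_{\nu\nu}$ and bounds it above — the lower bound being automatic from $k$-convexity.

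\textbf{Main obstacle.} The delicate point I expect is the boundary double-normal estimate together with verifying that the tangential $(k-1)\times(k-1)$ minor of $D^2u$ stays uniformly nondegenerate independently of $\epsilon$; this is exactly where strict $(k-1)$-convexity of both $\Omega_0$ and $\Omega_1$ and the uniform positive gradient bound $|u_\nu|\ge c>0$ on $\partial U$ are essential, and one must be careful that all constants depend only on the data $(\Omega_0,\Omega_1,n,k)$ and not on $\epsilon$ or on $R$ in the later exterior application. The interior concavity argument is comparatively routine given Lemma~\ref{concavity}, but keeping the subsolution term $F^{ij}\underline u_{ij}$ uniformly positive as $\epsilon\to 0$ (so that it does not degenerate against the shrinking right-hand side) is the technical heart that makes the estimate $\epsilon$-uniform.
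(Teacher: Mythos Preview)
Your overall plan is correct and follows the same decomposition (interior reduction plus tangential/tangential-normal/double-normal boundary estimates) as the paper. The one place where you work harder than necessary is the interior step: since the right-hand side is the \emph{constant} $\epsilon$, differentiating $\log S_k(D^2u)=\log\epsilon$ twice in a fixed direction $\xi$ gives simply
\[
F^{ij}(u_{\xi\xi})_{ij}=-F^{ij,kl}u_{ij\xi}u_{kl\xi}\ge 0
\]
by concavity of $\log S_k$, so $u_{\xi\xi}$ is subharmonic for the linearized operator and the maximum principle immediately yields $\max_{\overline U}u_{\xi\xi}\le\max_{\partial U}u_{\xi\xi}$. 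No auxiliary function of Chou--Wang type is needed here; the elaborate test function $\log\lambda_{\max}+\phi(|Du|^2)+A\underline u$ you propose is reserved in the paper for the exterior problem where the right-hand side $f^\varepsilon$ is variable.

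For the tangential-normal barrier, the paper uses $w=A_1(u-\underline u)+A_2|x|^2\pm T_\alpha u$ rather than your $\pm\partial_T u+A\Phi^i+B(\Phi^i)^2$; the mechanism is the same, and the key computation you anticipated---that $F^{ij}(u-\underline u)_{ij}\le -c\,\mathcal F$ uniformly in $\epsilon$---is obtained exactly as you describe, via $\log S_k$ concavity and $S_k(D^2\underline u)\ge\epsilon_0>2\epsilon$. Your identification of the double-normal step as the delicate point is accurate: the paper secures the uniform lower bound $S_{k-1}(u_{\alpha\beta})\ge c>0$ on $\partial\Omega_1$ from $(k-1)$-convexity and the gradient lower bound, while on $\partial\Omega_0$ it writes $u=g\,\underline u$ with $g\ge 1$ near the boundary to transfer the lower bound from the defining function $\Phi^0$. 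This last trick on $\partial\Omega_0$ is worth noting explicitly, since a direct gradient-times-curvature argument there would give the wrong sign.
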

\begin{proof}
 Since $F^{ij}u_{\xi\xi ij}=-F^{ij,kl}u_{\xi ij }u^{\varepsilon}_{\xi kl}\ge 0$, we have $\max\limits_{\overline\Omega}u_{\xi\xi}\le \max\limits_{\partial\Omega}u_{\xi\xi}$.
Thus we need to prove the second order estimate on the boundary $\partial U$.  Here we use the method by B. Guan \cite{guan1994cpde} and P. F. Guan\cite{gpf2002am} (see also \cite{guan2014duke}).

\emph{Tangential derivative estimates on $\partial U$}\\
For any fixed $x_0\in\partial U$, we choose the coordinate such that $x_0=0$, $\partial U\bigcap B_\delta(x_0)=(x', \rho(x'))$, $\rho(0)=0$ and $\nabla \rho(0)=0$.
Since $u(x', \rho(x'))=0$, we have
\begin{align*}
0=&u_\alpha(x', \rho(x'))+u_n(x', \rho(x'))\rho_\alpha(x'),\\
0=&u_{\alpha\beta}(0)+u_{\alpha n}(0)\rho_\beta(0)+u_{n\beta}(0)\rho_\alpha(0)+u_{nn}(0)
\rho_\alpha(0)\rho_\beta(0)+u_{n\beta}(0)\rho_{\alpha\beta}(0)\\
=&u_{\alpha\beta}(0)+u_{n}(0)\rho_{\alpha\beta}(0).
\end{align*}
Then we have $|u_{\alpha\beta}(0)|\le C \max\limits_{\partial \Omega}|Du|\le C$.

\emph{Tangential-normal derivative estimates on $\partial U$.}\\
 We use Guan's method \cite{guan1994cpde} (see slao \cite{guan2014duke}). Our barrier function here is simpler than  before since $u$ is constant on the boundary and the right hand side of the approximating equation is a sufficiently small constant $\epsilon$.

For any fixed $x_0\in\partial U$, we choose the coordinate such that $x_0=0$, $\partial U\bigcap B_\delta(x_0)=(x', \rho(x'))$,$\nabla \rho(0)=0$ and  $\rho(x')=\sum_{\alpha<n}\kappa_\alpha |x_\alpha|^2+O(|x'|^3)$. Consider the tangential operator  $T_{\alpha}=\partial_{\alpha}+\kappa_\alpha(x_{\alpha}\partial_n-x_n\partial_\alpha)$.

We will prove $ w=A_1(u-\underline{u})+A_2|x|^2\pm T_{\alpha}u\ge 0$ in $U_{\delta}:=B_{\delta}(0)\cap U$.
\\
Since $u-\underline{u}=0$ and $|T_\alpha u|\le C|x'|^2$ on $\partial U\cap B_{\delta }(0)$, we have
\begin{align*}
	w|_{\partial \Omega\cap B_{\delta}(0)}=A_2|x|^2-C|x|^2\ge 0,
	\end{align*}
where we require $A_2>C$.
Since $|T_{\alpha}u|\le C$ and $u\ge \underline{u}$, on $U\cap \p B_{\delta }(0)$, we have
\begin{align*}
	w|_{ \Omega\cap \partial B_{\delta}(0)}=A_2\delta^2-C> 0,
\end{align*}
where $A_2>2C\delta^{-2}$.
Thus we have $w\ge 0$ on $\partial U_{\delta}$.

Next we show $F^{ij}w_{ij}<0$ in
$U_{\delta}$. Indeed, recall $\underline{u}$ is $k$-convex and  $S_{k}(D^2 \underline{u})\ge \epsilon_0>0$, there exits $\tau_0>0$ sufficiently small depending only on $\epsilon_0$ and
 $|\underline{u}|_{C^2}$ such that $\tilde{\underline{u}}:=\underline{u}-\tau_0|x|^2$ is $k$-convex and  $S_{k}(D^2 \tilde{ \underline{u}})\ge \frac{\epsilon_0}{2}$.\\
  By the concavity of $\log S_k$, we have
 \begin{align*}
 	F^{ij}(u_{ij}-\tilde{\underline{u}}_{ij})\le &F(D^2u)-F(D^2\underline{u})\\
 	=&\log \epsilon-\log S_{k}(D^2\tilde{\underline{u}})\\
 	\le&\log \epsilon-\log \frac{\epsilon_0}{2}\\
 	<&0,
 	\end{align*}
 where we take $2\epsilon<\epsilon_0$. Thus we have
 \begin{align*}
 	F^{ij}(u-{\underline{u}})_{ij}=&
 	F^{ij}(u_{ij}-\tilde{\underline{u}}_{ij})-2\tau_0\mathcal{F}\\
 	<& -2\tau_0\mathcal{F},
 \end{align*}
where $\mathcal{F}=\sum_{i=1}^n F^{ii}$.
Then we have
\begin{align*}
	F^{ij}w_{ij}=&
	F^{ij}(A_1(u-\underline{u})+A_2|x|^2+T_{\alpha}u)_{ij}\\
	=&A_1F^{ij}(u-{\underline{u}})_{ij}+2A_2\mathcal{F}\\
	\le& -2A_1\tau_0\mathcal{F}+2A_2\mathcal{F}\\
	<&0,
	\end{align*}
where we use $F^{ij}(T_{\alpha}u)_{ij}=0$ and we take $A_1=\frac{A_2}{2\tau_0}$. Then we obtain $w\ge 0$ in $U_{\delta}$ and $w(0)=0$. Namely we have
\begin{align}
	|T_{\alpha} u|\le& A(u-\underline{u})+A_2|x|^2 \ \text{in}\ U_{\delta},\\
	(T_{\alpha} u)(0)=&0.
	\end{align}
This gives $|u_{\alpha n}(0)|\le C$.

\emph{Double normal derivative estimates on $\partial U$ }\\
	For any fixed $x_0\in\partial U$, we choose the coordinate such that $x_0=0$, $\partial U\bigcap B_r(x_0)=(x', \rho(x'))$ and $\nabla \rho(0)=0$.
	
\emph{\textbf{Case 1:} $x_0\in\partial \Omega_1$}

We have
\begin{align*}
	u_{\alpha\beta}(0)=-u_{n}(0)\rho_{\alpha\beta}(0)=|Du|(0)\rho_{\alpha\beta}(0).
	\end{align*}
Since $|Du|(0)\ge c>0$ on $\partial \Omega_1$ and $\Omega_1$ is $(k-1)$-convex, we have
\begin{align}\label{0717c1}
	S_{k}(u_{\alpha\beta}(0))\ge c^{k}S_{k}(\rho_{\alpha\beta}(0))\ge c_1>0.
	\end{align}

\emph{\textbf{Case 2:} $x_0\in\partial \Omega_0$}

Since $\frac{\partial u}{\partial \nu}\le \frac{\partial \underline{u}}{\partial \nu}=-|D\underline{u}|<0$ on $\partial\Omega_0$, we have $|Du|>|D\underline{u}|>a_0>0$ and then there exists a smooth function $g$ such that  $u=g\underline{u}$ near $\partial\Omega_0$. Since $u\ge \underline{u}>0$ in $U$, we have $g\ge 1$ near $\partial\Omega_0$. On the other hand, since $\underline{u}=0$ on $\partial \Omega_0$, we have for any $1\le \alpha, \beta\le n-1$, $u_{\alpha}(0)=\underline{u}_{\alpha}(0)=0$. Thus \begin{align*}u_{\alpha\beta}(0)=&g_{\alpha\beta}(0)\underline{u}(0)+g_\alpha(0) \underline{u}_\beta(0)+g_\beta(0)\underline{u}_\alpha(0)+g(0)\underline{u}_{\alpha\beta}(0)\notag\\
=&g(0)\tau\Phi^{0}_{\alpha\beta}(0),
\end{align*}
where we have used $\underline{u}=\tau_0\Phi^0$ near $\partial\Omega_0$.
Therefore
\begin{align}\label{0717c2} S_{k-1}(u_{\alpha\beta}(0))=g^{k-1}(0)\tau_0^{k-1}S_{k-1}
\left(\Phi^{0}_{\alpha\beta}(0)\right)\ge \tau_0^{k-1}C_n^{k-1}C_{n}^{\frac{k-1}{k}}
\min\limits_{\partial\Omega_0}S_{k}^{\frac{k-1}{k}}
\left(D^2\Phi^{0}\right):=c_2>0.
\end{align}
 Let $c_0=\min\{c_1, c_2\}$ (see \eqref{0717c1} and \eqref{0717c2}),  we have
\begin{align*}
u_{nn}c_0\le u_{nn}(0)S_{k-1}(u_{\alpha\beta}(0))=&S_k(D^2 u(0))-S_{k}(u_{\alpha\beta}(0))
+\sum_{i=1}^{n-1}u_{in}^2S_{k-2}(u_{\alpha\beta})\\
\le& C.
\end{align*}
Then we obtain
\begin{align*}
	u_{nn}(0)\le C,
	\end{align*}
where $C$ is a uniform constant.
  On the other hand, $u_{nn}(0)\ge \sum\limits_{i=1}^{n-1}u_{\alpha\alpha}(0)\ge -C$. In conclusion, we have $|u_{nn}(0)|\le C$.

  In conclusion, we get the uniform $C^2$ estimate.
\end{proof}

\subsection{Proof of Theorem \ref{ring0720}}

The uniqueness follows from the comparison principal for $k$-convex solutions of $k$-Hessian equations in Lemma \ref{comparison0718} by Wang-Trudinger\cite{trudingerwang1997tmna} (see also \cite{trudinger1997cpde, urbas1990indiana})..

For the existence part,  since $u^{\epsilon}$ is increasing on $\epsilon$,  $u^{0}:=\lim\limits_{\epsilon\rightarrow 0}u^{\epsilon}$ exits. Since  $|u^{\epsilon}|_{C^2(\overline U)}\le C$, there exists a subsequence $u^{\epsilon_i}$  converges to $u^0 $ in $C^{1,\alpha}$ on $\overline U$ and $u^0\in C^{1,1}(\overline U)$.

\section{Solving the approximating equation in $\Omega_R:=B_R\setminus \Omega$.}
We always assume $\Omega$ is a smoothly convex domain and strictly $(k-1)$-convex. Recall that we always assume $B_r\subset\subset\Omega\subset\subset B_{\frac{R_0}{2}}$.
\subsection{Case 1: $k<\frac{n}{2}$}
Since the Green function in this case is $-|x|^{\frac{2k-n}{k}}$, we want to solve the following $k$-Hessian equation .
\begin{equation}\label{case1Equa}
\left\{\begin{aligned}S_{k}(D^2 u)=&0 \qquad\text{in}\quad \Omega^c:=\mathbb{R}^n\setminus\Omega,\\
u=&-1\ \ \ \text{on}\ \ \partial\Omega,\\
\lim\limits_{x\rightarrow\infty}u(x)=&0.
\end{aligned}
\right.
\end{equation}


Define $w^{1,\varepsilon}:=-\left({R_0^2+\varepsilon^2}
\right)^{\frac{n-2k}{2k}}
\left(|x|^2+\varepsilon^2\right)^{-\frac{n-2k}{2k}}$.
We have
\begin{align*}
f^{1,\varepsilon}:=S_{k}(D^2w^{1,\varepsilon})
=&C_{n}^k\left(\frac{k}{n-2k}\right)^k
(R_0^2+\varepsilon^2)^{\frac{n-2k}{2}}
(|x|^2+\varepsilon^2)^{-\frac{n}{2}-1}\varepsilon^2.
\end{align*}
For $\varepsilon$ small enough, we  can construct a smoothly strictly $k$-convex function $\underline {u}^{1,\varepsilon}$ as follows
\begin{lemma}
For any $\varepsilon\in (0,\frac{R_0}{3})$ small enough,	there exists a  strictly $k$-convex function $\underline {u}^{1,\epsilon}\in C^{\infty}{(\mathbb R^n \setminus\Omega)}$ satisfying
\begin{align}
\underline{u}^{1,\varepsilon}=&
\left\{\begin{aligned}\label{subu107191}
& \tau_0(e^{d(x)}-1)-1
, \ \ \qquad\qquad \text{in}\ B_{\frac{2}{3}R_0}\setminus \Omega, \\
&
w^{1,\varepsilon} \qquad\qquad\qquad\qquad\ \quad\ \text{in}\ B_{2R_0}^c,
\end{aligned}
\right.\\
\underline{u}^{1,\varepsilon}\ge& \max\left\{ w^{1,\varepsilon}, \tau_0(e^{d(x)}-1)-1\right\} \ \ \ \text{in}\ B_{2R_0}\setminus B_{\frac{2}{3}R_0},\notag\\
S_k(D^2 \underline{u}^{1,\varepsilon})\ge& f^{1,\varepsilon}, \qquad\qquad\qquad\qquad\qquad\text{in}\  \Omega^c,\notag
\end{align}
where $\tau_0=2^{-\frac {n}{2k}}\frac{2^{\frac{n-2k}{2k}}-1}{e^{3R_0}-1}>0$ since $k<\frac{n}{2}$.
\end{lemma}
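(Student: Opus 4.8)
The plan is to construct $\underline{u}^{1,\varepsilon}$ by gluing two explicit strictly $k$-convex functions — the inner barrier $g(x):=\tau_0(e^{d(x)}-1)-1$ (adapted from the defining-function barrier of Caffarelli--Nirenberg--Spruck, and smooth in $\Omega^c$ because $\Omega$ is convex so $d(x)=\mathrm{dist}(x,\partial\Omega)$ is smooth there) and the outer radial barrier $w^{1,\varepsilon}$ defined above — via the smoothing Lemma \ref{Guan2002}. First I would verify that both pieces are strictly $k$-convex and satisfy $S_k(D^2\cdot)\ge f^{1,\varepsilon}$ where each is active: for $w^{1,\varepsilon}$ this is the displayed identity $S_k(D^2 w^{1,\varepsilon})=f^{1,\varepsilon}$ (with the stated normalization making the prefactor exactly $f^{1,\varepsilon}$ up to the constant $c_{n,k}$); for $g$ it reduces to checking $S_k(D^2 g)\ge\epsilon_0>f^{1,\varepsilon}$ on $B_{\frac23 R_0}\setminus\Omega$ for $\varepsilon$ small, which follows from the CNS construction since $g$ has the form $t_0^{-1}(e^{t_0 d}-1)$ with the sign of the exponent flipped (legitimate in the exterior region) — one computes $D^2 g$ in terms of $D d$ and $D^2 d$, uses that $\Omega$ convex forces $D^2 d\le 0$ with the right eigenvalue structure, and checks the eigenvalues lie in $\Gamma_k$ with $S_k$ bounded below.

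Next I would check that the two regions of definition are genuinely separated in the "$\delta=\frac12$ sense" of Lemma \ref{Guan2002}, i.e. that on the transition annulus the gap $g-w^{1,\varepsilon}$ has the correct sign and magnitude on each side. Near $\partial\Omega\subset B_{\frac23 R_0}$ one needs $g - w^{1,\varepsilon}>\frac12$, i.e. the inner barrier lies well above the outer one, which holds because $w^{1,\varepsilon}\to 0^-$ slowly while $g$ starts near $-1$... actually the correct comparison is the reverse: one wants $w^{1,\varepsilon}-g>\frac12$ near $\partial\Omega$ (so the glued function equals $g$ there) — this is where the precise choice of $\tau_0$ enters. The formula $\tau_0 = 2^{-\frac n{2k}}\frac{2^{\frac{n-2k}{2k}}-1}{e^{3R_0}-1}$ is exactly what makes $g\le w^{1,\varepsilon}$ by a definite margin at $|x|=R_0$ (where $w^{1,\varepsilon}(x)\approx -(R_0^2+\varepsilon^2)^{\frac{n-2k}{2k}}(R_0^2+\varepsilon^2)^{-\frac{n-2k}{2k}}\cdot(\text{something})$) and forces the reverse inequality far out where $w^{1,\varepsilon}\to 0$; I would track the two endpoint evaluations at $|x|=\frac23 R_0$ and $|x|=2R_0$, confirm $\tau_0$ makes $g< w^{1,\varepsilon}-\frac12$ near $\partial\Omega$ and $g> w^{1,\varepsilon}$... on $B_{2R_0}\setminus B_{\frac23 R_0}$ with the strict separation happening on the two boundary spheres of that annulus, so Lemma \ref{Guan2002} produces $\underline{u}^{1,\varepsilon}$ equal to $g$ on $B_{\frac23 R_0}\setminus\Omega$ and equal to $w^{1,\varepsilon}$ on $B_{2R_0}^c$.

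Finally I would invoke part (2) of Lemma \ref{Guan2002} together with the remark "$H$ is $k$-convex if $f$ and $g$ are both $k$-convex" (via concavity of $\log S_k$, Lemma \ref{concavity}) to conclude that $\underline{u}^{1,\varepsilon}$ is strictly $k$-convex on the transition region and that $S_k(D^2\underline{u}^{1,\varepsilon})\ge\min$ of the two lower bounds, i.e. $\ge f^{1,\varepsilon}$ throughout $\Omega^c$ (using that $\sup f^{1,\varepsilon}$ on the transition annulus is $\le$ both $\epsilon_0$ and the local value of $S_k(D^2 w^{1,\varepsilon})$ for $\varepsilon$ small, since $f^{1,\varepsilon}=O(\varepsilon^2)$ uniformly on any fixed compact set away from issues at the origin — and the origin is inside $\Omega$, so no trouble). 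The constraint $\varepsilon\in(0,\frac{R_0}{3})$ is just what keeps $B_{\frac23 R_0}$, $B_{2R_0}$ honestly nested and the various $\varepsilon$-perturbed quantities close to their $\varepsilon=0$ values.

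The main obstacle I anticipate is the bookkeeping in the second step: getting the separation inequalities $|g-w^{1,\varepsilon}|>\frac12$ on the two spheres to come out with the \emph{stated} value of $\tau_0$ rather than merely "some small $\tau_0$", and checking these bounds are uniform in $\varepsilon$ for $\varepsilon$ small. The $k$-convexity verifications for $g$ and $w^{1,\varepsilon}$ are essentially routine (the radial one is a direct eigenvalue computation showing $\lambda(D^2 w^{1,\varepsilon})\in\Gamma_k$ because $\frac{2k-n}{2k}<0$ and $k<\frac n2$; the inner one is CNS plus convexity of $\Omega$), but the gluing estimate is where the explicit geometry of the problem has to be pinned down carefully.
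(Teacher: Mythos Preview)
Your approach is exactly the paper's: apply Lemma~\ref{Guan2002} with $h=w^{1,\varepsilon}$, $g=\tau_0(e^{d(x)}-1)-1$ on $U=B_{3R_0}\setminus\overline\Omega$, then extend by $w^{1,\varepsilon}$ outside $B_{3R_0}$. The strict $k$-convexity and the inequality $S_k(D^2\underline u^{1,\varepsilon})\ge f^{1,\varepsilon}$ indeed follow from part (2) of the lemma together with the concavity of $\log S_k$, just as you say.

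There is, however, a genuine sign confusion in your second step. Your \emph{first} instinct was correct and your self-correction is wrong. Near $\partial\Omega$ (i.e.\ in $B_{\frac{2}{3}R_0}\setminus\Omega$) one needs $g-w^{1,\varepsilon}>\delta$, so that Lemma~\ref{Guan2002} gives $H=g$ there. This holds because $w^{1,\varepsilon}$ is \emph{very negative} for small $|x|$: with $|x|\le\frac{2}{3}R_0$ and $\varepsilon<\frac{R_0}{3}$ one has $|x|^2+\varepsilon^2<\frac{1}{2}(R_0^2+\varepsilon^2)$, hence
\[
-w^{1,\varepsilon}(x)=\Big(\tfrac{R_0^2+\varepsilon^2}{|x|^2+\varepsilon^2}\Big)^{\frac{n-2k}{2k}}>2^{\frac{n-2k}{2k}},
\]
while $g(x)\ge -1$, so $g-w^{1,\varepsilon}>2^{\frac{n-2k}{2k}}-1$. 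Conversely, on $B_{3R_0}\setminus B_{2R_0}$ one checks $w^{1,\varepsilon}-g>\delta$ (so $H=w^{1,\varepsilon}$): here $w^{1,\varepsilon}$ is close to zero while $g\le \tau_0(e^{3R_0}-1)-1$, and this is where the choice of $\tau_0$ is used. Also note $\delta$ is not $\tfrac12$ but $\delta=2^{-\frac{n}{2k}}(2^{\frac{n-2k}{2k}}-1)=\tau_0(e^{3R_0}-1)$; with this $\delta$ both separation inequalities go through with the stated $\tau_0$, uniformly in $\varepsilon\in(0,\frac{R_0}{3})$.
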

\begin{proof}
We apply Lemma \ref{Guan2002} by taking $U=B_{3R_0}\setminus \overline\Omega$, $h=w^{1,\varepsilon}$, $g=\tau_0(e^{d(x)}-1)-1$ and $\delta=2^{-\frac {n}{2k}}({2^{\frac{n-2k}{2k}}}-1)$ to get a function $u^{1,\varepsilon}\in C^{\infty}({\overline U})$ which  is strictly $k$-convex and satisfies
\begin{align*}
	\underline{u}^{1,\varepsilon}\ge& \max\left\{ w^{1,\varepsilon}, \tau_0(e^{d(x)}-1)-1\right\} \ \ \ \text{in}\ B_{2R_0}\setminus B_{\frac{2}{3}R_0}\\
	S_k(D^2\underline{u}^{1,\varepsilon} )\ge& f^{1,\varepsilon} \ \text{in}\ U.
\end{align*}

Next we prove \eqref{subu107191}.
When $x\in B_{3R_0}\setminus B_{2R_0}$,  for $\varepsilon<R_0$,
\begin{align*}
h(x)-g(x)=&w^{1,\varepsilon}(x)-\tau_0(e^{d(x)}-1)+1\\
\ge &-\left({R_0^2+\varepsilon^2}
\right)^{\frac{n-2k}{2k}}\left({4R_0^2+\varepsilon^2}
\right)^{-\frac{n-2k}{2k}}-\tau_0(e^{3R_0}-1)+1\\
>&1-{2^{-\frac{n-2k}{2k}}}-\tau_0(e^{3R_0}-1)\\
>&\frac{1}{2}(1-{2^{-\frac{n-2k}{2k}}})\\
=:&\delta>0,
\end{align*}
where $\delta>0$ since $k<\frac{n}{2}$.\\
When $x\in  B_{\frac{2}{3}R_0}\setminus\Omega$, since $\varepsilon<\frac{R_0}{3}$, we have
\begin{align*}
g(x)-h(x)\ge &-w^{1,\varepsilon}(x)-1\\
\ge&\left({R_0^2+\varepsilon^2}
\right)^{\frac{n-2k}{2k}}\left({\frac{4}{9}R_0^2+\varepsilon^2}
\right)^{-\frac{n-2k}{2k}}-1\\
 \ge& {2^{\frac{n-2k}{2k}}}-1>\delta.
\end{align*}

We can finish the proof by extending the domain of  $u^{1,\epsilon}$  to $\Omega^c$ by taking $u^{1,\epsilon}=w^{1,\epsilon}$ in $B_{3R_0}^c$.
\end{proof}
Now for any $\varepsilon\in(0, \varepsilon_0)$ and $R\in(K_0(R_0+1), +\infty)$ with $\varepsilon_0$ small enough and $K_0$ large enough, we consider the approximating equation
\begin{equation}\label{case1EquaAppr}
\left\{\begin{aligned}S_{k}(D^2 u)=& f^{1,\varepsilon} \ \quad\text{in}\quad \Omega_R=B_R\setminus\Omega,\\
u=& -1\ \text{on}\quad \partial\Omega,\\
u(x)=& \underline u^{1,\varepsilon} \quad\text{on} \quad\partial B_{R}.
\end{aligned}
\right.
\end{equation}
Since $u^{1,\varepsilon}$ is a subsolution, by Guan \cite{guan1994cpde}, equation \eqref{case1EquaAppr} has a strictly $k$-convex solution $u^{\varepsilon,R}\in C^{\infty}(\overline \Omega_R)$. Our goal is to establish uniform $C^2$ estimates of $u^{\varepsilon,R}$, which are independent of $\varepsilon$ and $R$.
We prove the following
\begin{theorem}\label{apu10720}
Assume $1\le k<\frac{n}{2}$. For every sufficiently small $\varepsilon$ and sufficiently large $R$,    $u^{\varepsilon, R}$ satisfies
\begin{align*}
\left\{
\begin{aligned}
C^{-1}|x|^{-\frac{n-2k}{k}}\le& -u^{\varepsilon, R}(x)\le C|x|^{-\frac{n-2k}{k}},\\
C^{-1}|x|^{-\frac{n-k}{k}}\le|Du^{\varepsilon, R}|(x)\le& C|x|^{-\frac{n-k}{k}},\\
|D^2u^{\varepsilon, R}|(x)\le& C|x|^{-\frac{n}{k}},
\end{aligned}
\right.
\end{align*}
where $C$ is a uniform constant which is independent of $\varepsilon$ and $R$.
\end{theorem}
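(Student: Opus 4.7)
The proof proceeds in four stages---$C^0$, gradient, interior $C^2$, and boundary $C^2$---all uniform in $\varepsilon$ and $R$, with the correct $|x|$-decay built in via a dyadic scaling argument.

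\emph{$C^0$ bounds.} The subsolution $\underline u^{1,\varepsilon}$ gives $u^{\varepsilon,R}\ge \underline u^{1,\varepsilon}$ by the comparison principle (Lemma~\ref{comparison0718}); since $\underline u^{1,\varepsilon}$ coincides with $w^{1,\varepsilon}\sim -|x|^{(2k-n)/k}$ near infinity, this yields $-u^{\varepsilon,R}(x)\le C|x|^{(2k-n)/k}$. For the matching lower bound on $-u^{\varepsilon,R}$, I would use the explicit $k$-convex supersolution $G(x):=-c_0|x|^{(2k-n)/k}$ of $S_k(D^2G)=0$; choosing $c_0\le r_0^{(n-2k)/k}$ guarantees $G\ge u^{\varepsilon,R}=-1$ on $\partial\Omega$, and an analogous choice guarantees $G\ge w^{1,\varepsilon}=u^{\varepsilon,R}$ on $\partial B_R$ for all large $R$. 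Comparison then gives $u^{\varepsilon,R}\le G$ throughout $\Omega_R$.

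\emph{Scaling and interior estimates.} The structural point is that both the equation and the target decay are scale-invariant: if $u_\lambda(y):=\lambda^{(n-2k)/k}u^{\varepsilon,R}(\lambda y)$, then a direct computation gives $S_k(D^2 u_\lambda)(y)=f^{1,\varepsilon/\lambda}(y)$, a problem of the same form with parameter $\varepsilon/\lambda$. By the $C^0$ step, $|u_\lambda|$ is bounded on the fixed annulus $\{1/2\le|y|\le 2\}$ independently of $\lambda,\varepsilon,R$. It therefore suffices to establish uniform interior gradient and Hessian bounds for $u_\lambda$ on $\{|y|=1\}$; unrolling the scaling then converts these into $|Du^{\varepsilon,R}|(x)\le C|x|^{(k-n)/k}$ and $|D^2 u^{\varepsilon,R}|(x)\le C|x|^{-n/k}$. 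For the gradient bound I would use the standard auxiliary function $\varphi:=\eta(y)|Du_\lambda|^2 e^{\alpha u_\lambda}$ with a cutoff $\eta$ supported on the annulus, applying $F^{ij}:=\partial\log S_k/\partial u_{ij}$ at an interior maximum and exploiting the concavity of $\log S_k$ (Lemma~\ref{concavity}). For the Hessian bound I would follow Chou--Wang \cite{cw2001cpam}, choosing $W:=\eta\log\lambda_{\max}(D^2 u_\lambda)+\Phi(|Du_\lambda|^2)+\Psi(u_\lambda)$ with $\Phi,\Psi$ tuned so that the terms involving $(\log f^{1,\varepsilon/\lambda})_i$ are absorbed by the concavity inequality. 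The lower bound $|Du^{\varepsilon,R}|(x)\ge C^{-1}|x|^{(k-n)/k}$ will follow from a Hopf-type comparison between $u^{\varepsilon,R}$ and the supersolution $G$, using the $k$-convexity of level sets (Lemma~\ref{07261}) to convert control of the radial derivative into control of the full gradient.

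\emph{Boundary estimates.} Near $\partial\Omega$, the argument of Section~3 carries over essentially verbatim: tangential-tangential derivatives follow from $u^{\varepsilon,R}|_{\partial\Omega}=-1$, tangential-normal derivatives from the barrier $w=A_1(u^{\varepsilon,R}-\underline u^{1,\varepsilon})+A_2|x-x_0|^2\pm T_\alpha u^{\varepsilon,R}$ combined with the strict $k$-convexity of $\underline u^{1,\varepsilon}$, and double-normal derivatives from $S_k(D^2 u^{\varepsilon,R})=f^{1,\varepsilon}\le C$ together with the strict $(k-1)$-convexity of $\partial\Omega$. Near $\partial B_R$ the boundary data $u^{\varepsilon,R}|_{\partial B_R}=w^{1,\varepsilon}$ is explicit and radially symmetric, and the dyadic rescaling already produces the sharp $C^2$ decay bounds in a one-sided neighborhood of $\partial B_R$ independent of $R$, so no separate boundary analysis is needed there.

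The main obstacle is the interior $C^2$ estimate with the sharp decay $|x|^{-n/k}$ uniformly in $\varepsilon,R$: even after the dyadic rescaling, the right-hand side $f^{1,\varepsilon/\lambda}$ may vanish as $\varepsilon/\lambda\to 0$, so standard non-degenerate $C^2$ estimates (whose constants depend on $\inf f^{1/k}$) are unavailable. The Chou--Wang auxiliary function is robust enough to avoid such dependence on $\inf f$, which is precisely why this method is invoked in the paper's outline.
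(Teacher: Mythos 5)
Your proposal takes a genuinely different route from the paper. Where you propose a dyadic rescaling $u_\lambda(y)=\lambda^{(n-2k)/k}u^{\varepsilon,R}(\lambda y)$ followed by cutoff-based \emph{interior} gradient and Hessian estimates on a fixed annulus, the paper instead works with \emph{cutoff-free, globally defined, scale-neutral} test functions on all of $\Omega_R$: for the gradient, $P=|Du|^2(-u)^{-2(n-k)/(n-2k)}$, and for the Hessian, $\tilde G = u_{\xi\xi}\,\varphi(P)\,h(u)$ with $h(u)=(-u)^{-n/(n-2k)}$. Because $P$ and $\tilde G$ are dimensionless (precisely $O(1)$ under the decay one is trying to prove), no cutoff is needed: the maximum principle reduces $\sup P$ and $\sup\tilde G$ either to $\partial\Omega_R$ (where explicit barriers give the boundary bounds) or to an interior critical point where the algebra of $S_k$ and the concavity of $\log S_k$ close the estimate. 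This is a different mechanism than a local Pogorelov-with-cutoff argument, and the difference matters.

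The main gap in your proposal is precisely at the interior $C^2$ step, which you flag as the "main obstacle" but do not resolve. Pure interior $C^2$ estimates with a compactly supported cutoff $\eta$ for admissible solutions of $S_k(D^2u)=f$ are not available for general $2\le k\le n-1$: when you differentiate $W=\eta\log\lambda_{\max}+\Phi+\Psi$, the terms $F^{ij}\eta_{ij}\log\lambda_{\max}$ and $F^{ij}\eta_i(\cdot)_j$ appear, and since $\mathcal F=\sum F^{ii}$ is not a priori bounded for the $k$-Hessian operator, these cutoff terms are not controllable in general. Chou--Wang's method, which you invoke, is itself a global Dirichlet argument, not a local cutoff argument; the paper imports exactly the part of it that works at a global interior maximum and ties the rest to the boundary. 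The fix is what the paper does: replace the artificial cutoff $\eta$ by the natural weight $h(u)$ (vanishing at the right rate and defined everywhere), so that the maximum of the test function is genuinely controlled by $\partial\Omega_R$.

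Two secondary gaps: (i) the lower bound $|Du^{\varepsilon,R}|\ge C^{-1}|x|^{1-n/k}$ does not follow from a Hopf-type comparison, since $u^{\varepsilon,R}$ and the supersolution $G$ do not touch in the interior; the paper instead proves $F^{ij}H_{ij}<0$ for $H=x\cdot Du+b_1 u$ and uses the maximum principle to push $H>0$ (hence $x\cdot Du>-b_1u$) into the interior. (ii) At $\partial B_R$ the $C^2$ boundary estimate is not free: the paper carries out genuine tangential, tangential-normal (barrier $w=A_0(1+x_n/R)\pm R^{(n-2k)/k}T_\alpha u$) and double-normal estimates there, and the lower bound on $S_{k-1}(u_{\alpha\beta})$ coming from the spherical geometry is essential. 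The $C^0$ part of your proposal is fine and matches the paper's.
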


\subsection{{Case 2:} $k>\frac{n}{2}$}
Since the Green function in this case is $|x|^{\frac{2k-n}{k}}$, we want to solve the $k$-Hessian equation as follows
\begin{equation}\label{case2Equa}
\left\{\begin{aligned}S_{k}(D^2 u)=&0 \quad\text{in}\ \ \  \Omega^c,\\
u=&1\ \ \ \ \text{on}\ \partial\Omega,\\
u(x)=&|x|^{\frac{2k-n}{k}}+O(1) \ \text{as}\ {x\rightarrow\infty}.
\end{aligned}
\right.
\end{equation}

\subsubsection{The approximating equation }
Define $w^{2,\varepsilon}:=\left(
{|x|^2+\varepsilon^2}\right)^{\frac{2k-n}{2k}}-\left(
{R_0^2+\varepsilon^2}\right)^{\frac{2k-n}{2k}}+1$ and we have
\begin{align*}
	f^{2,\varepsilon}:=S_{k}(D^2w^{2,\varepsilon})
	=&C_{n}^k\left(\frac{k}{2k-n}\right)^k
	(|x|^2+\varepsilon^2)^{-\frac{n}{2}-1}\varepsilon^2.
\end{align*}

We construct a smoothly and strictly $k$-convex function $\underline {u}^{2,\varepsilon}$ as follows
\begin{lemma}
For any $\varepsilon\in (0,\frac{R_0}{3})$ small enough, there exists a  strictly $k$-convex function $\underline {u}^{2,\varepsilon}\in C^{\infty}{(\mathbb R^n \setminus\Omega)}$ satisfying
	\begin{align}
		\underline{u}^{2,\epsilon}=&
		\left\{\begin{aligned}
			\label{subu207191}
			& \tau_0(e^{d(x)}-1)+1
			, \quad\qquad\qquad \text{in}\ B_{\frac{2}{3}R_0}\setminus \overline\Omega, \\
			&
			w^{2,\varepsilon} \qquad\qquad\qquad\qquad\ \quad\ \text{in}\ B_{2R_0}^c,
		\end{aligned}
		\right.\\
		\underline{u}^{2,\varepsilon}\ge& \max\left\{ w^{2,\varepsilon}, \tau_0(e^{d(x)}-1)+1\right\} \ \ \ \text{in}\ B_{2R_0}\setminus B_{\frac{2}{3}R_0},\notag\\
		S_k(D^2 \underline{u}^{2,\epsilon})\ge& f^{2,\varepsilon}, \qquad\qquad\qquad\qquad\qquad\text{in}\  \Omega^c,\notag
	\end{align}
	where $\tau_0=\frac12 R_0^{\frac {2k-n}{k}}\frac{2^{\frac{2k-n}{2k}}-1}{e^{3R_0}-1}>0$ since $k>\frac{n}{2}$.
\end{lemma}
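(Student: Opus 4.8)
The plan is to mimic exactly the construction carried out for Case 1 ($k<\frac{n}{2}$), applying the gluing Lemma \ref{Guan2002} of P. F. Guan to paste together two explicit strictly $k$-convex functions: the "inner" barrier $g(x):=\tau_0(e^{d(x)}-1)+1$, where $d(x)$ is the distance to $\partial\Omega$ (smooth on $\Omega^c$ because $\Omega$ is convex, as noted in the proof of Lemma \ref{subu0720}), and the "outer" radial function $h:=w^{2,\varepsilon}=(|x|^2+\varepsilon^2)^{\frac{2k-n}{2k}}-(R_0^2+\varepsilon^2)^{\frac{2k-n}{2k}}+1$. First I would record that $g$ is strictly $k$-convex near $\partial\Omega$: this is the Caffarelli--Nirenberg--Spruck defining-function estimate (used already in Lemma \ref{subu0720}), using that $\Omega$ is strictly $(k-1)$-convex, together with the fact that $e^{d}-1$ composed with such a defining function stays strictly $k$-convex after shrinking the collar. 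Next I would verify that $w^{2,\varepsilon}$ is strictly $k$-convex on all of $\mathbb{R}^n$: since $2k-n>0$, the exponent $\frac{2k-n}{2k}\in(0,1)$, and a direct eigenvalue computation of $D^2(|x|^2+\varepsilon^2)^{\frac{2k-n}{2k}}$ shows the radial eigenvalue is positive and the $(n-1)$-fold tangential eigenvalue is positive, so $\lambda(D^2 w^{2,\varepsilon})\in\Gamma_n\subset\Gamma_k$; along the way this computation also yields the stated formula $S_k(D^2 w^{2,\varepsilon})=C_n^k\big(\tfrac{k}{2k-n}\big)^k(|x|^2+\varepsilon^2)^{-\frac{n}{2}-1}\varepsilon^2=f^{2,\varepsilon}$.

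With both pieces strictly $k$-convex in hand, I would apply Lemma \ref{Guan2002} on the bounded domain $U:=B_{3R_0}\setminus\overline\Omega$ with $h=w^{2,\varepsilon}$, $g=\tau_0(e^{d(x)}-1)+1$, and a suitable small $\delta>0$ (the analogue of $\delta=2^{-n/2k}(2^{(n-2k)/2k}-1)$ in Case 1, here something like $\delta=\tfrac12 R_0^{(2k-n)/k}(2^{(2k-n)/2k}-1)$ tied to the choice of $\tau_0$). By conclusion (1) of Lemma \ref{Guan2002}, the resulting $H=:\underline{u}^{2,\varepsilon}$ equals $g$ wherever $g-h>\delta$ and equals $h$ wherever $h-g>\delta$, and satisfies $\underline{u}^{2,\varepsilon}\ge\max\{w^{2,\varepsilon},\tau_0(e^{d(x)}-1)+1\}$ everywhere; by conclusion (2) and Lemma \ref{concavity} (concavity of $S_k^{1/k}$, hence $\log S_k$), the Hessian of $\underline{u}^{2,\varepsilon}$ dominates a convex combination of $D^2 g$ and $D^2 h$, so $\underline{u}^{2,\varepsilon}$ is strictly $k$-convex and $S_k(D^2\underline{u}^{2,\varepsilon})\ge\min\{\text{strict bound for }g,\ f^{2,\varepsilon}\}\ge f^{2,\varepsilon}$ on $U$ (after shrinking the strictness constant for $g$ if needed so that it beats $f^{2,\varepsilon}$, which is harmless since $f^{2,\varepsilon}\to 0$).

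It then remains to check the two separation inequalities that pin down where $\underline{u}^{2,\varepsilon}$ coincides with $g$ and with $h$, exactly as in Case 1. For $x\in B_{3R_0}\setminus B_{2R_0}$ one needs $h-g>\delta$: here $h(x)=(|x|^2+\varepsilon^2)^{(2k-n)/2k}-(R_0^2+\varepsilon^2)^{(2k-n)/2k}+1$; since $2k-n>0$ and $|x|\ge 2R_0>R_0$, the first two terms give a positive contribution bounded below, while $g(x)\le\tau_0(e^{3R_0}-1)+1$, and the choice $\tau_0=\tfrac12 R_0^{(2k-n)/k}\tfrac{2^{(2k-n)/2k}-1}{e^{3R_0}-1}$ is arranged precisely so that $h-g>\delta$ there; note the sign of $2k-n$ is what makes $\tau_0>0$ and the bound work, which is why the construction differs from the $k<\frac n2$ case. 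For $x\in B_{\frac23 R_0}\setminus\overline\Omega$ one needs $g-h>\delta$: here $g(x)\ge 1$ (as $d\ge 0$) and $h(x)=(|x|^2+\varepsilon^2)^{(2k-n)/2k}-(R_0^2+\varepsilon^2)^{(2k-n)/2k}+1$ which, since $|x|<\frac23 R_0<R_0$ and $\varepsilon<\frac{R_0}{3}$, is strictly less than $1$ by a definite amount (because $(\cdot)^{(2k-n)/2k}$ is increasing), giving $g-h>\delta$. Finally, extend $\underline{u}^{2,\varepsilon}$ to all of $\Omega^c$ by setting it equal to $w^{2,\varepsilon}$ on $B_{3R_0}^c$; this is consistent because on the overlap annulus $\underline{u}^{2,\varepsilon}=h=w^{2,\varepsilon}$ there, and $S_k(D^2 w^{2,\varepsilon})=f^{2,\varepsilon}$ holds on that exterior region. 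I expect the only mildly delicate point to be bookkeeping the constants $\tau_0,\delta,\varepsilon_0$ so that all three separation/positivity requirements hold simultaneously for every $\varepsilon\in(0,\tfrac{R_0}{3})$ — but this is exactly parallel to the $k<\frac n2$ argument already given, with the role of the exponent sign reversed.
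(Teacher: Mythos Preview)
Your strategy is exactly the paper's: apply Lemma~\ref{Guan2002} on $U=B_{3R_0}\setminus\overline\Omega$ with $h=w^{2,\varepsilon}$, $g=\tau_0(e^{d(x)}-1)+1$ and $\delta=\tfrac12 R_0^{(2k-n)/k}(2^{(2k-n)/2k}-1)$, check the separation $h-g>\delta$ on $B_{3R_0}\setminus B_{2R_0}$ and $g-h>\delta$ on $B_{\frac23R_0}\setminus\overline\Omega$, then extend by $w^{2,\varepsilon}$ on $B_{3R_0}^c$. The bookkeeping of $\tau_0,\delta$ you anticipate is precisely what the paper carries out.

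There is, however, one concrete error in your verification that $w^{2,\varepsilon}$ is strictly $k$-convex. You claim that both eigenvalues of $D^2(|x|^2+\varepsilon^2)^{\alpha}$ (with $\alpha=\frac{2k-n}{2k}$) are positive, hence $\lambda\in\Gamma_n$. This is false for $\frac n2<k<n$: the radial eigenvalue equals $2\alpha(|x|^2+\varepsilon^2)^{\alpha-2}\big[(2\alpha-1)|x|^2+\varepsilon^2\big]$, and since $2\alpha-1=\frac{k-n}{k}<0$, it becomes negative as soon as $|x|^2>\frac{k}{n-k}\varepsilon^2$ --- i.e.\ on essentially all of $U$ for small $\varepsilon$. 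So $w^{2,\varepsilon}$ is \emph{not} convex. What is true (and is all you need) is $\lambda(D^2w^{2,\varepsilon})\in\Gamma_k$: with the $(n-1)$-fold tangential eigenvalue $\lambda_T=2\alpha(|x|^2+\varepsilon^2)^{\alpha-1}>0$ and the radial $\lambda_R$ as above, one checks $\lambda_R/\lambda_T=\frac{-(n-k)|x|^2/k+\varepsilon^2}{|x|^2+\varepsilon^2}>-\frac{n-k}{k}$, which is exactly the threshold for $S_k>0$ (and in fact recovers $S_k(D^2w^{2,\varepsilon})=f^{2,\varepsilon}>0$), and the same inequality with $j<k$ in place of $k$ is weaker, so all $S_j>0$. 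With this corrected, the gluing via Lemma~\ref{Guan2002} and the concavity of $S_k^{1/k}$ (Lemma~\ref{concavity}) goes through as you describe.
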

\begin{proof}
	We apply Lemma \ref{Guan2002} by taking $U=B_{3R_0}\setminus \overline\Omega$, $h=w^{2,\varepsilon}$, $g=\tau_0(e^{d(x)}-1)-1$ and $\delta=\frac12R_0^{\frac {2k-n}{k}}({2^{\frac{2k-n}{2k}}}-1)$ to get a function $u^{2,\varepsilon}\in C^{\infty}({\overline U})$ which  is strictly $k$-convex and satisfies
	\begin{align*}
		\underline{u}^{2,\varepsilon}\ge& \max\left\{ w^{2,\varepsilon}, \tau_0(e^{d(x)}-1)-1\right\} \ \ \ \text{in}\ B_{2R_0}\setminus B_{\frac{2}{3}R_0}\\
		S_k(D^2\underline{u}^{2,\varepsilon} )\ge& f^{2,\varepsilon} \ \text{in}\ U.
	\end{align*}
	
	Next we prove \eqref{subu207191}.
	When $x\in B_{3R_0}\setminus B_{2R_0}$,  for $\varepsilon<R_0$,
	\begin{align*}
		h(x)-g(x)=&w^{2,\varepsilon}(x)-\tau_0(e^{d(x)}-1)-1\\
		\ge &\left({4R_0^2+\varepsilon^2}
		\right)^{\frac{2k-n}{2k}}-\left({R_0^2+\varepsilon^2}
		\right)^{\frac{n-2k}{2k}}-\tau_0(e^{3R_0}-1)\\
		>&R_0^{\frac {2k-n}{k}}({2^{\frac{2k-n}{2k}}-1})-\tau_0(e^{3R_0}-1)\\
		>&\frac{1}{2}R_0^{\frac {2k-n}{k}}({2^{\frac{2k-n}{2k}}-1})\\
		=:&\delta>0,
	\end{align*}
	where we choose $\tau_0=\frac12 R_0^{\frac {2k-n}{k}}\frac{2^{\frac{2k-n}{2k}}-1}{e^{3R_0}-1}>0$ and $\delta>0$ since $k>\frac{n}{2}$.\\
	When $x\in  B_{\frac{2}{3}R_0}\setminus\Omega$,
	\begin{align*}
		g(x)-h(x)\ge &-w^{2,\varepsilon}(x)-1\\
		\ge&
		(R_0^2+\varepsilon^2)^{\frac{2k-n}{2k}}-(\frac 4 9R_0^2+\varepsilon^2)^{\frac{2k-n}{2k}}\\
		\ge&2^{-\frac{2k-n}{2k}}R_0^{\frac {2k-n}{k}}({2^{\frac{2k-n}{2k}}}-1)
		>\delta.
	\end{align*}
	
	Then we  finish the proof by extending the domain of  $u^{2,\epsilon}$  to $\Omega^c$ by taking $u^{2,\epsilon}=w^{2,\epsilon}$ in $B_{3R_0}^c$.
\end{proof}

We consider the  approximating equation as follows

\begin{align}\label{case2EquaAppr}
	\left\{ {\begin{array}{*{20}c}
			{S_k(D^2 u)=f^{2,\varepsilon}  , \ \text{in}\ \  B_R \setminus\overline\Omega,}\\
			{u=1,\  \text{on} \ \ \ \partial\Omega} , \\
			{u=\underline{u}^{2,\varepsilon}, \ \text{on}\ \  \  \partial B_R }.  \\
	\end{array}} \right.
\end{align}
Since $u^{2,\varepsilon}$ is a subsolution, by Guan \cite{guan1994cpde}, equation \eqref{case2EquaAppr} has a strictly $k$-convex solution $u^{\varepsilon,R}\in C^{\infty}(\overline \Omega_R)$. Our goal is to establish uniform $C^2$ estimates of $u^{\varepsilon,R}$, which are independent of $\varepsilon$ and $R$.

We prove the following
\begin{theorem}\label{apu20720}
 Assume $ k>\frac{n}{2}$. For every sufficiently small $\varepsilon$ and sufficiently large $R$,    $u^{\varepsilon, R}$ satisfying
	\begin{align*}
		\left\{
		\begin{aligned}
			| u^{\varepsilon, R}(x)-|x|^{\frac{2k-n}{k}}|\le  C,\\
		 C^{-1}|x|^{\frac{k-n}{k}}\le	|Du^{\varepsilon, R}|(x)\le& C|x|^{\frac{k-n}{k}},\\
			|D^2u^{\varepsilon, R}|(x)\le& C|x|^{-\frac{n}{k}},
		\end{aligned}
		\right.
	\end{align*}
	where $C$ is a uniform constant which is independent of $\varepsilon$ and $R$.
\end{theorem}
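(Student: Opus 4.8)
The plan is to establish the four a priori estimates in Theorem \ref{apu20720} by the same strategy used in Case~1, exploiting the subsolution $\underline u^{2,\varepsilon}$ and a supersolution built from the Green-type function $w^{2,\varepsilon}$, together with a Chow--Wang type barrier argument for the gradient and second derivatives. First I would observe that, by construction, $w^{2,\varepsilon}$ is a strictly $k$-convex supersolution ($S_k(D^2 w^{2,\varepsilon})=f^{2,\varepsilon}$) agreeing with the boundary data on $\partial B_R$ and lying above $1$ on $\partial\Omega$, while $\underline u^{2,\varepsilon}$ is a subsolution agreeing with the boundary data on both $\partial\Omega$ and $\partial B_R$. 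The comparison principle (Lemma \ref{comparison0718}) then gives $\underline u^{2,\varepsilon}\le u^{\varepsilon,R}\le w^{2,\varepsilon}$ on $\overline\Omega_R$, which immediately yields $|u^{\varepsilon,R}(x)-|x|^{\frac{2k-n}{k}}|\le C$ with $C$ depending only on $R_0$, $n$, $k$, and $\sup|\Phi^0|$, since both bounding functions differ from $|x|^{\frac{2k-n}{k}}$ by a bounded amount uniformly in $\varepsilon$ and $R$.

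Next, for the gradient estimate I would split into an interior region and a far region. On $\partial\Omega$ the two-sided bound on $u^{\varepsilon,R}$ together with $u^{\varepsilon,R}=\underline u^{2,\varepsilon}=w^{2,\varepsilon}$ at the respective boundaries forces, via the Hopf lemma comparison at the boundary (as in the $C^1$ estimate of Theorem \ref{ring0720}), $c\le |Du^{\varepsilon,R}|\le C$ on $\partial\Omega$ and $C^{-1}R^{\frac{k-n}{k}}\le|Du^{\varepsilon,R}|\le C R^{\frac{k-n}{k}}$ on $\partial B_R$. Since $F^{ij}(u_\xi)_{ij}=0$, the function $|Du|$ has no interior maximum, but to get the sharp decay rate $|x|^{\frac{k-n}{k}}$ rather than just a uniform bound one must use a scaling/barrier argument: for $x_0$ with $|x_0|=\rho$ large, rescale $u$ on an annulus $B_{2\rho}\setminus B_{\rho/2}$ by $\tilde u(y)=\rho^{-\frac{2k-n}{k}} u(\rho y)$, note $\tilde u$ again solves a $k$-Hessian equation with right-hand side $\le C\rho^{-2}$ which is harmless, apply the uniform (scale-invariant) interior gradient and second-derivative estimates to $\tilde u$ on the fixed annulus, and scale back. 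The two-sided pinching $\underline u^{2,\varepsilon}\le u\le w^{2,\varepsilon}$ provides the $C^0$ control needed to feed this argument and also gives the lower bound on $|Du|$ after scaling.

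For the second-order estimate I would combine a global-to-boundary reduction with the boundary Hessian estimates. Since $F^{ij}u_{\xi\xi ij}=-F^{ij,kl}u_{\xi ij}u_{\xi kl}\ge 0$ by concavity of $\log S_k$ (Lemma \ref{concavity}), $\max_{\overline\Omega_R}u_{\xi\xi}$ is attained on $\partial\Omega_R$; on $\partial\Omega$ one runs the three-step boundary estimate (tangential, tangential-normal, double-normal) exactly as in Theorem \ref{ring0720}, using $\underline u^{2,\varepsilon}$ as barrier, the strict $(k-1)$-convexity of $\partial\Omega$, and the lower bound $|Du|\ge c$ on $\partial\Omega$; on $\partial B_R$ the boundary data $\underline u^{2,\varepsilon}=w^{2,\varepsilon}$ is explicit and rotationally symmetric, so the boundary Hessian there is computed directly and has size $O(R^{-n/k})$. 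Then the scaling argument of the previous paragraph upgrades the uniform bound to the sharp decay $|D^2 u^{\varepsilon,R}|(x)\le C|x|^{-n/k}$.

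The main obstacle I expect is the interior gradient lower bound together with the scale-invariant second-order estimate in the far region: one needs uniform (independent of $\varepsilon$ and $R$) interior $C^{1,1}$ estimates for non-degenerate $k$-Hessian equations on a fixed annulus with right-hand side $f^{2,\varepsilon}$, and one must verify that after rescaling the right-hand side stays in a class for which the Chow--Wang argument applies with constants independent of the (now tiny) right-hand side; the degeneracy as $\varepsilon\to 0$ is precisely where care is required, and the positive lower bound for $|Du|$ away from $\partial\Omega$ cannot come from the maximum principle alone but must be extracted from the pinching by $\underline u^{2,\varepsilon}$ and $w^{2,\varepsilon}$, whose gradients both decay like $|x|^{\frac{k-n}{k}}$ with matching constants.
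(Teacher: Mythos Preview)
Your $C^0$ step contains a concrete error: $w^{2,\varepsilon}$ is \emph{not} an upper barrier. On $\partial\Omega$ one has $|x|<R_0$ and hence $(|x|^2+\varepsilon^2)^{\frac{2k-n}{2k}}<(R_0^2+\varepsilon^2)^{\frac{2k-n}{2k}}$, so $w^{2,\varepsilon}<1=u^{\varepsilon,R}$ there; the comparison $u^{\varepsilon,R}\le w^{2,\varepsilon}$ fails. The paper instead uses the homogeneous (degree $\frac{2k-n}{k}$) function $|x|^{\frac{2k-n}{k}}-r_0^{\frac{2k-n}{k}}+1$ as an upper barrier: it satisfies $S_k=0<f^{2,\varepsilon}$ and is $\ge 1$ on $\partial\Omega$ because $B_{r_0}\subset\Omega$. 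A second, more serious gap is your gradient lower bound: pinching $\underline u^{2,\varepsilon}\le u\le (\text{upper barrier})$ gives information about $|Du|$ only at points of $\partial\Omega_R$, not in the interior; the fact that both barriers have gradients $\sim|x|^{\frac{k-n}{k}}$ says nothing about $|Du|$ away from the boundary.

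The paper's route is genuinely different from your scaling proposal and avoids both problems. For the sharp gradient decay it proves a maximum principle for the weighted quantity $P=|Du|^2u^{\frac{2(n-k)}{2k-n}}$ (the exponent is dictated by the Green function), showing $\max_{\overline\Omega_R}P\le C+\max_{\partial\Omega_R}P$; combined with boundary gradient bounds this gives $|Du|\le C|x|^{\frac{k-n}{k}}$ directly, with no rescaling. For the interior lower bound it runs a separate maximum principle for $H=x\cdot Du-b_2(u-1)-a_2$, checking $F^{ij}H_{ij}<0$ for a carefully chosen $b_2=\frac{2k-n}{k}-\frac{1}{2k}$, which yields $x\cdot Du\ge c|x|^{\frac{2k-n}{k}}$ globally. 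For the Hessian decay the paper again uses a weighted quantity $\tilde G=u_{\xi\xi}\varphi(P)h(u)$ with $h(u)=u^{\frac{n}{2k-n}}$ and a Chou--Wang type dichotomy ($\lambda_k\ge\delta\lambda_1$ versus $\lambda_k<\delta\lambda_1$), obtaining $\lambda_1 h\le C$ directly. Your scaling approach would require uniform interior $C^{1,1}$ estimates on a fixed annulus for $k$-Hessian equations with right-hand side of order $\varepsilon^2\rho^{-2}$ (possibly degenerate) from $C^0$ control alone; such estimates are not standard for $k<n$ and would themselves require an argument of the paper's type, so the scaling detour does not actually simplify anything.
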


\subsection{{Case 3:} $k=\frac{n}{2}$}
Since the Green function in this case is $\log |x|$, we want to solve the $k$-Hessian equation as follows
\begin{equation}\label{case2Equa}
\left\{\begin{aligned}S_{\frac{n}{2}}(D^2 u)=&0 \quad\text{in}\quad \Omega^c,\\
u=&0\ \ \ \text{on}\ \partial\Omega,\\
u(x)=&\log |x|+O(1) \ \text{as}\ {x\rightarrow\infty}.
\end{aligned}
\right.
\end{equation}

\subsubsection{The approximating equation }
Define $w^{3,\varepsilon}:=\frac{1}{2}\log\frac{|x|^2+\varepsilon^2}{R_0^2+\varepsilon^2}$ and we have
\begin{align}
	f^{3,\varepsilon}:=S_k(D^2w^{3,\varepsilon})=2^{k+1}C_{n-1}^{\frac{n}{2}-1}\varepsilon^2(|x|^2+\varepsilon^2)^{-\frac n2-1}
	\end{align}

We construct a smoothly and strictly $k$-convex function $\underline {u}^{3,\varepsilon}$ as follows
\begin{lemma}
For any $\varepsilon\in (0,\frac{R_0}{3})$,	there exists a  strictly $k$-convex function $\underline {u}^{3,\varepsilon}\in C^{\infty}{(\mathbb R^n \setminus\Omega)}$ satisfying
	\begin{align}
		\underline{u}^{3,\varepsilon}=&
		\left\{\begin{aligned}\label{subu307191}
			& \tau_0(e^{d(x)}-1)
			\quad\qquad\qquad \text{in}\ B_{\frac{2}{3}R_0}\setminus \Omega, \\
			&
			w^{3,\varepsilon} \qquad\qquad\qquad\qquad\ \quad\ \text{in}\ B_{2R_0}^c,
		\end{aligned}
		\right.\\
		\underline{u}^{3,\varepsilon}\ge& \max\left\{ w^{3,\varepsilon}, \tau_0(e^{d(x)}-1)\right\} \ \ \ \text{in}\ B_{2R_0}\setminus B_{\frac{2}{3}R_0},\notag\\
		S_k(D^2 \underline{u}^{3,\varepsilon})\ge& f^{3,\varepsilon} \qquad\qquad\qquad\qquad\qquad\text{in}\  \Omega^c,\notag
	\end{align}
	where $\tau_0=\frac{1}{4}\frac{\log 2}{e^{3R_0}-1}$.
\end{lemma}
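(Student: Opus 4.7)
The plan is to follow the template of the preceding two lemmas verbatim, applying Guan's gluing lemma (Lemma~\ref{Guan2002}) to the two pieces $g(x):=\tau_0(e^{d(x)}-1)$ and $h(x):=w^{3,\varepsilon}(x)$ on the annulus $U:=B_{3R_0}\setminus\overline\Omega$, with separation parameter $\delta:=\tfrac{1}{4}\log 2$. The prescribed value $\tau_0=\tfrac{1}{4}\log 2/(e^{3R_0}-1)$ is then the largest constant for which $\tau_0(e^{d(x)}-1)\le\delta$ holds throughout $U$ (using $d(x)\le|x|\le 3R_0$, which holds because $0\in\Omega$). After producing the glued function on $U$, I would extend it to all of $\Omega^c$ by setting $\underline{u}^{3,\varepsilon}\equiv w^{3,\varepsilon}$ on $\mathbb{R}^n\setminus B_{3R_0}$; this is consistent because part~(1) of Lemma~\ref{Guan2002} forces the glued function to coincide with $h$ already on the outermost annulus $B_{3R_0}\setminus B_{2R_0}$.

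To invoke Lemma~\ref{Guan2002}, I first check that $g$ and $h$ are both strictly $k$-convex with $S_k(D^2 g),\,S_k(D^2 h)\ge f^{3,\varepsilon}$. For $h$ this is the defining identity $S_k(D^2 w^{3,\varepsilon})=f^{3,\varepsilon}$. For $g$, convexity of $\Omega$ makes $d\in C^{\infty}(\Omega^c)$ with $D^2d\ge 0$; in an orthonormal frame aligned with the principal directions of the level sets of $d$, one has $D^2g=\tau_0 e^{d}\,\mathrm{diag}\bigl(1,\kappa_1/(1+d\kappa_1),\ldots,\kappa_{n-1}/(1+d\kappa_{n-1})\bigr)$, and strict $(k-1)$-convexity of $\partial\Omega$ together with $\kappa_i\ge 0$ places this eigenvalue tuple in $\Gamma_k$ with $S_k(D^2g)$ bounded below by a positive constant depending only on $\Omega,\tau_0,R_0$. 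Since $f^{3,\varepsilon}\le C\varepsilon^2 r_0^{-n-2}$ on $U$ (using $|x|\ge r_0$), this lower bound exceeds $f^{3,\varepsilon}$ once $\varepsilon$ is small enough. Part~(2) of Lemma~\ref{Guan2002} combined with the concavity of $S_k^{1/k}$ from Lemma~\ref{concavity} then yields $S_k(D^2\underline{u}^{3,\varepsilon})\ge f^{3,\varepsilon}$ on all of $U$, hence on all of $\Omega^c$.

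The remaining task is to verify the two separation inequalities, which pin down the explicit form of $\underline{u}^{3,\varepsilon}$ on the extreme annuli. On $B_{3R_0}\setminus B_{2R_0}$, using $|x|\ge 2R_0$ and $\varepsilon<R_0/3$,
\begin{align*}
h-g\ \ge\ \tfrac{1}{2}\log\frac{4R_0^2+\varepsilon^2}{R_0^2+\varepsilon^2}-\tau_0(e^{3R_0}-1)\ >\ \tfrac{1}{2}\log 2-\tfrac{1}{4}\log 2\ =\ \delta,
\end{align*}
and on $B_{2R_0/3}\setminus\Omega$, using $|x|\le 2R_0/3$, $\varepsilon<R_0/3$, and $g\ge 0$,
\begin{align*}
g-h\ \ge\ -w^{3,\varepsilon}\ \ge\ \tfrac{1}{2}\log\frac{R_0^2+\varepsilon^2}{4R_0^2/9+\varepsilon^2}\ \ge\ \tfrac{1}{2}\log 2\ >\ \delta.
\end{align*}
The main obstacle is purely bookkeeping: for $k=n/2$, $w^{3,\varepsilon}$ grows only logarithmically in $|x|$, so the gaps between $h$ and $g$ on the two extreme annuli are of logarithmic rather than power type as in Cases~1 and~2, which forces the smaller $\delta=\tfrac{1}{4}\log 2$ and the correspondingly smaller $\tau_0$. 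With these constants dialed in, the rest of the argument is a direct transcription of the previous two lemmas.
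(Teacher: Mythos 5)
Your proposal is correct and takes essentially the same route as the paper: apply Guan's gluing lemma on $B_{3R_0}\setminus\overline\Omega$ with $g=\tau_0(e^{d(x)}-1)$, $h=w^{3,\varepsilon}$, $\delta=\tfrac14\log2$, verify the two separation inequalities on the extreme annuli, and extend by $w^{3,\varepsilon}$ on $B_{3R_0}^c$, with the numerology matching the paper exactly. Your added remarks (the eigenvalue computation of $D^2g$ giving strict $k$-convexity, and the observation that $S_k(D^2 g)\ge f^{3,\varepsilon}$ requires $\varepsilon$ small) supply details the paper leaves implicit, and are correct.
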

\begin{proof}
	We apply Lemma \ref{Guan2002} by taking $U=B_{3R_0}\setminus \overline\Omega$, $h=w^{2,\varepsilon}$, $g=\tau_0(e^{d(x)}-1)$ and $\delta=\frac14\log2$ to get a function $u^{2,\varepsilon}\in C^{\infty}({\overline U})$ which  is strictly $k$-convex and satisfies
	\begin{align*}
		\underline{u}^{3,\varepsilon}\ge& \max\left\{ w^{3,\varepsilon}, \tau_0(e^{d(x)}-1)\right\} \ \ \ \text{in}\ B_{2R_0}\setminus B_{\frac{2}{3}R_0}\\
		S_k(D^2u^{3,\varepsilon} )\ge& f^{3,\varepsilon} \ \text{in}\ U.
	\end{align*}
	
	Next we prove \eqref{subu307191}.
	When $x\in B_{3R_0}\setminus B_{2R_0}$,
	\begin{align*}
		h(x)-g(x)=&w^{3,\varepsilon}(x)-\tau_0(e^{d(x)}-1)\\
		\ge &\frac{1}{2}\log\frac{4R_0^2+\varepsilon^2}{R_0^2+\varepsilon^2}
		-\tau_0(e^{3R_0}-1)\\
		>&\frac{1}{2}\log2-\tau_0(e^{3R_0}-1)\\
		>&\frac{1}{4}\log 2
		=:\delta,
	\end{align*}
	where we use $\varepsilon<R_0$ and we choose $\tau_0=\frac{1}{4}\frac{\log 2}{e^{3R_0}-1}$.\\
	When $x\in  B_{\frac{2}{3}R_0}\setminus\Omega$, since $\varepsilon<\frac{1}{3}R_0$, we have
	\begin{align*}
		g(x)-h(x)\ge &-w^{2,\varepsilon}(x)\\
		\ge&
		\frac{1}{2}\log\frac{R_0^2+\varepsilon^2}{\frac49R_0^2+\varepsilon^2}\\
		\ge&\frac{1}{2}\log 2
		>\delta.
	\end{align*}
	
	Then we  finish the proof by extending the domain of  $u^{2,\epsilon}$  to $\Omega^c$ by taking $u^{2,\epsilon}=w^{2,\epsilon}$ in $B_{3R_0}^c$.
\end{proof}
We consider the  approximating equation as follows
\begin{align}\label{case3EquaAppr}
	\left\{ {\begin{array}{*{20}c}
			{S_k(D^2 u)=f^{3,\varepsilon}  , \ \text{in}\ \  B_R \setminus\overline\Omega,}\\
			{u=0,\  \text{on} \ \ \ \partial\Omega} , \\
			{u=\underline{u}^{3,\varepsilon}, \ \text{on}\ \  \  \partial B_R }.  \\
	\end{array}} \right.
\end{align}
Since $u^{3,\varepsilon}$ is a subsolution, by Guan \cite{guan1994cpde}, equation \eqref{case2EquaAppr} has a strictly $k$-convex solution $u^{\varepsilon,R}\in C^{\infty}(\overline \Omega_R)$.
%

We prove the following
\begin{theorem}\label{apu30720}
	Assume $ k=\frac{n}{2}$.  For every sufficiently small $\varepsilon$ and sufficiently large $R$,    $u^{\varepsilon, R}$ satisfies
	\begin{align*}
		\left\{
		\begin{aligned}
			 |u^{\varepsilon, R}(x)-\log|x||\le& C,\\
		C^{-1}|x|^{-1}\le	|Du^{\varepsilon, R}|(x)\le& C|x|^{-1},\\
			|D^2u^{\varepsilon, R}|(x)\le& C|x|^{-2},
		\end{aligned}
		\right.
	\end{align*}
	where $C$ is a uniform constant which is independent of $\varepsilon$ and $R$.
	\end{theorem}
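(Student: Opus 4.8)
The plan is to mimic, in the scale-invariant borderline case $k=\frac n2$, the strategy already outlined for Case 1 and Case 2: first pin down the $C^0$ bound by comparison with the explicit radial barriers $w^{3,\varepsilon}$ and $\underline u^{3,\varepsilon}$, then upgrade to the gradient and Hessian decay. Since $u^{\varepsilon,R}$ lies between the subsolution $\underline u^{3,\varepsilon}$ and a natural supersolution, and $\underline u^{3,\varepsilon}=w^{3,\varepsilon}=\frac12\log\frac{|x|^2+\varepsilon^2}{R_0^2+\varepsilon^2}$ in $B_{2R_0}^c$, the comparison principle (Lemma \ref{comparison0718}) gives $w^{3,\varepsilon}\le u^{\varepsilon,R}\le W$ in $\Omega_R$ for a suitable radial supersolution $W$ with the same logarithmic growth; this yields $|u^{\varepsilon,R}(x)-\log|x||\le C$ with $C$ independent of $\varepsilon,R$ once one checks $\underline u^{3,\varepsilon}$ is a genuine subsolution on $\partial B_R$ for all large $R$ (which is exactly how the approximating problem \eqref{case3EquaAppr} was set up). The natural supersolution is again of the form $\frac12\log\frac{|x|^2+\varepsilon^2}{R_0^2+\varepsilon^2}+$ a bounded $k$-convex correction near $\partial\Omega$ matching the boundary data $0$, built from the defining function of $\Omega$ via Lemma \ref{Guan2002}; here convexity of $\Omega$ and strict $(k-1)$-convexity are used.

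The heart of the matter is the interior and boundary gradient and second-order estimates with the stated decay rates $|Du|\sim |x|^{-1}$, $|D^2u|\le C|x|^{-2}$, uniform in $\varepsilon$ and $R$. For the gradient: the upper bound $|Du|\le C|x|^{-1}$ follows from the $C^0$ estimate together with a rescaling argument — on the annulus $B_{2\rho}\setminus B_{\rho/2}$ set $v(y)=\rho^{-?}\cdot(u(\rho y)-\log\rho)$ so that $v$ solves a $k$-Hessian equation with right-hand side of size $\varepsilon^2\rho^{-n}(\cdots)$, which is uniformly bounded on the unit annulus, and with uniformly bounded $C^0$ norm; then standard local gradient estimates for $k$-Hessian equations (or the constant-rank/maximum principle applied to $|Du|^2$, as in the ring case where $F^{ij}(u_\xi)_{ij}=0$) give $|Dv|\le C$ on the unit annulus, i.e. $|Du(x)|\le C|x|^{-1}$. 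The lower bound $|Du|\ge C^{-1}|x|^{-1}$ is subtler and I would obtain it from comparison: on each dyadic annulus $u^{\varepsilon,R}$ is squeezed between two radial functions whose radial derivatives are both comparable to $|x|^{-1}$, so a Hopf-type argument on the level sets, or direct comparison of $u$ with $w^{3,\varepsilon}+$const from inside and a shifted barrier from outside on the annulus, forces $|Du|$ to be bounded below; this mirrors the promised "uniform positive lower bound of the gradient" in the abstract. The Hessian bound is then obtained by the same rescaling to the unit annulus followed by the interior second-order estimate for $k$-Hessian equations (Chow–Wang's method, as the authors announce), combined with a boundary second-order estimate near $\partial\Omega$ carried out exactly as in the ring case (Section 3): tangential–tangential from differentiating the boundary condition twice, tangential–normal from the barrier $w=A_1(u-\underline u)+A_2|x|^2\pm T_\alpha u$, and double-normal from the equation using that $S_{k-1}$ of the tangential Hessian is bounded below by strict $(k-1)$-convexity of $\partial\Omega$ and the gradient lower bound.

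The main obstacle I expect is making the boundary second-order estimate near $\partial B_R$ uniform in $R$, and propagating it inward without losing the decay. Near the outer sphere the solution is forced to equal the explicit barrier $\underline u^{3,\varepsilon}=w^{3,\varepsilon}$, whose Hessian at radius $R$ is $O(R^{-2})$; one must show $u^{\varepsilon,R}$ inherits this, uniformly as $R\to\infty$. The clean way is again rescaling: on $B_{2R}\setminus B_{R/2}$ the rescaled solution $v(y)=u^{\varepsilon,R}(Ry)-\log R$ solves a uniformly elliptic, uniformly $C^0$-bounded $k$-Hessian problem with smooth boundary data converging (as $R\to\infty$, $\varepsilon/R\to 0$) to the harmonic-type model $\frac12\log|y|^2$, so global $C^2$ estimates on the fixed annulus $\{ \frac12\le|y|\le 2\}$ hold with a constant independent of $R,\varepsilon$; rescaling back gives $|D^2 u^{\varepsilon,R}(x)|\le C|x|^{-2}$ for $|x|$ comparable to $R$, and a covering argument by dyadic annuli extends this to all of $\Omega_R$ away from a fixed neighborhood of $\partial\Omega$, where the fixed-scale boundary estimate of Section 3 applies. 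Once these a priori bounds are in hand, letting first $R\to\infty$ and then (in Section 5) $\varepsilon\to0$ along a subsequence, with $C^{1,\alpha}$ compactness and the comparison principle for uniqueness, produces the claimed solution; but for the present theorem it suffices to record the uniform estimates themselves.
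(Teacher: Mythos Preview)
Your overall architecture (barriers for $C^0$, then gradient, then Hessian, with separate boundary work near $\partial\Omega$ and near $\partial B_R$) matches the paper, but the mechanism you propose for the interior gradient and Hessian decay is genuinely different, and it has a gap.

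The paper does \emph{not} rescale to dyadic annuli and invoke ``standard local estimates.'' Instead it builds global auxiliary functions that already encode the decay: for the gradient it applies the maximum principle to $P=|Du|^2 e^{2u}$ (so that $\max P$ is attained on $\partial\Omega_R$ or controlled by $e^{2u}|D\log f^{3,\varepsilon}|^2$, both uniformly bounded), for the gradient lower bound it uses $H=x\cdot Du - b_3$ and checks $F^{ij}H_{ij}<0$ directly from the explicit form of $f^{3,\varepsilon}$, and for the Hessian it runs Chow--Wang's argument on $\tilde G=u_{\xi\xi}\,\varphi(P)\,e^{2u}$. In each case the weight $e^{2u}$ (the analogue of $(-u)^{-2(n-k)/(n-2k)}$ in Case 1) is what produces the $|x|^{-1}$ and $|x|^{-2}$ rates after using the $C^0$ bound $|u-\log|x||\le C$. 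The boundary second-order estimate on $\partial B_R$ is done directly (tangential from $u|_{\partial B_R}=\text{const}$, tangential--normal via $w=A_0(1+x_n/R)\pm R^{(n-2k)/k}T_\alpha u$, double normal from the equation), not by rescaling.

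The gap in your route is the appeal to ``standard local gradient/second-order estimates'' after rescaling. If you set $v(y)=u^{\varepsilon,R}(\rho y)-\log\rho$ on $\{\tfrac12<|y|<2\}$, the equation becomes $S_k(D^2v)=f^{3,\varepsilon/\rho}(y)$, which degenerates as $\varepsilon/\rho\to 0$; interior $C^1$ and $C^2$ estimates for $S_k(D^2v)=g$ with $g\searrow 0$ are precisely what is \emph{not} standard, since uniform ellipticity is lost. You would end up needing, on the fixed annulus, exactly the $\varepsilon$-uniform estimate the paper proves globally via $P$ and $\tilde G$. Likewise your ``Hopf-type'' argument for the lower bound of $|Du|$ is not fleshed out; the paper's one-line device $H=x\cdot Du-b_3$ with $F^{ij}H_{ij}=(|x|^2+\varepsilon^2)^{-1}(-2|x|^2+n\varepsilon^2)<0$ is what actually closes that step. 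I would recommend replacing the rescaling heuristics by these explicit auxiliary functions.
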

In the next subsections, we will prove uniform $C^{2}$-estimates of solutions of equations \eqref{case1EquaAppr}, \eqref{case2EquaAppr} and \eqref{case3EquaAppr}. The key point is that these estimates are independent of $\varepsilon$ and $R$.
\subsection{$C^0$ estimates}
\subsubsection{Case 1: $k<\frac{n}{2}$}
We fisrt prove $u^{\epsilon,R}$ is increasing with $R$. Indeed, since  for any $\widetilde R>R\ge 100(R_0+1)$, we have

\begin{align*}
\left\{\begin{aligned}
S_k(D^2u^{\varepsilon,R})=&S_k(D^2u^{\varepsilon,\widetilde{R}})=f^{1,\varepsilon}
\ \text{in}\ \Omega_R\\
u^{\varepsilon,R}=&u^{\varepsilon,\widetilde R}=-1 \ \text{on}\  \partial \Omega,\\
u^{\varepsilon,R}=&\underline{u}^{\varepsilon}\le u^{\varepsilon,\widetilde R}\  \text{on} \ \partial B_R
\end{aligned}
\right.
\end{align*}
 Applying the maximum principle in $\Omega_R$, we have
 \begin{align}\label{C0Esti1}
 u^{\varepsilon,R}\le {u}^{\varepsilon,\tilde R}.
 \end{align}
 Since
\begin{align*}
\left\{\begin{aligned}
S_k(D^2u^{\varepsilon,R})=f^{1,\varepsilon}>
0=S_k\left(D^2\left({ r_0^{ \frac{n-2k}{k} }}{ |x|^{-\frac{n-2k}{k}} }\right)\right),
\ \text{in}\ \Omega_{\tilde{R}}\\
u^{\varepsilon,\tilde R}=-1<-{r_0^{\frac{n-2k}{k}}}{|x|^{-\frac{n-2k}{k}}}, \ \text{on}\  \partial \Omega \ (\textbf{\text{For}} \ B_{r_0}\subset\subset\Omega),\\
 u^{\varepsilon,\tilde R} =-\left(\frac{R_0^2+\varepsilon^2}{\widetilde{R} ^2+\varepsilon^2}\right)^{\frac{n-2k}{2k}}<-{r_0^{\frac{n-2k}{k}}}
 {\widetilde{R} ^{-\frac{n-2k}{k}}},\  \text{on} \ \partial B_{\tilde {R}}
\end{aligned}
\right.
\end{align*}
Applying the maximum principle in $\Omega_{\widetilde R}$, we have
\begin{align}\label{C0Esti2}
u^{\varepsilon,\tilde R}<-{r_0^{\frac{n-2k}{k}}}{|x|^{-\frac{n-2k}{k}}}\quad \text{in}\quad \Omega_{\widetilde R}.
\end{align}
Then by \eqref{C0Esti1} and \eqref{C0Esti2}, for any $\widetilde R>R\ge 100(R_0+1)$,
\begin{align}\label{C0Esti}
u^{\varepsilon, R}\le u^{\varepsilon,\tilde R}<-{r_0^{\frac{n-2k}{k}}}{|x|^{-\frac{n-2k}{k}}}\quad \text{in}\quad \Omega_{R}.
\end{align}
On the other hand, for any $x\in \Omega_R$, we have $u^{\varepsilon, R}(x)\ge \underline{u}^{1,\varepsilon}(x)\ge w^{1,\varepsilon}=-(R_0^2+\varepsilon^2)^{\frac{n-2k}{2k}}
(|x|^2+\varepsilon^2)^{-\frac{n-2k}{2k}}$. Thus  when $k<\frac{n}{2}$, for any $x\in \Omega_R$,
\begin{align*}
r_0^{\frac{n-2k}{k}}|x|^{-\frac{n-2k}{k}}\le -u^{\varepsilon,R}(x)\le R_0^{\frac{n-2k}{k}}|x|^{-\frac{n-2k}{k}}.
\end{align*}

\subsubsection{Case 2: $k>\frac{n}{2}$}
Firstly, we have for any $x\in \Omega_R$, $u^{\varepsilon, R}(x)\ge \underline{u}^{2,\varepsilon}\ge w^{2,\varepsilon}=
(|x|^2+\varepsilon^2)^{\frac{2k-n}{2k}}-(R_0^2+\varepsilon^2)^{\frac{2k-n}{2k}}+1$ and this gives the lower bound of $u^{\varepsilon, R}$.

Since ${|x|^{\frac{2k-n}{k}}}-{r_0^{\frac{2k-n}{k}}}+1$ is the upper barrier of $u^{\varepsilon, R}$ in $\Omega_R$, we get ${u}^{\varepsilon, R}-|x|^{\frac{2k-n}{k}}\le C$.
\subsubsection{Case 3: $k=\frac{n}{2}$}
The proof is similar as that in Case 2.\\
\subsection{Gradient estimates}
In this subsection, we prove the global gradient estimate. The key point is that the estimate here does not depend on $\varepsilon$ and $R$. We also prove that the positive lower bound of the  gradient of the solution  and thus the level set of the solution is compact.
\subsubsection\textbf{\emph{{Reducing global gradient estimates to boundary gradient estimates}}}
This part is the key part of gradient estimates. The point in here is that the gradient estimate is independent of the approximating process.  This estimates is motivated by B. Guan \cite{gb2007imrn}.
\begin{theorem}
Let $u$ be the solution of the approximating equation  \eqref{case1EquaAppr}, \eqref{case2EquaAppr} or \eqref{case3EquaAppr}. Denote by
\begin{align}\label{threecase}
P=
\left\{ {\begin{array}{*{20}c}
   {|Du|^2e^{2u}, \quad \quad  \ k=\frac{n}{2}  }, \\
   {|Du|^2 u^{\frac{2(n-k)}{2k-n}} , \ \quad\  k>\frac{n}{2} }, \\
   {|Du|^2 (-u)^{-\frac{2(n-k)}{n-2k}}, \ k<\frac{n}{2} }.  \\
\end{array}} \right.
\end{align}
then we have the following gradient estimate
\begin{align}\label{Gradientestimate}
\max\limits_{B_R\setminus\Omega} P\le
\left\{ {\begin{array}{*{20}c}
   {
   \max \left\{\max\limits_{B_R\setminus\Omega} (e^{2u} |D\log f^{3,\varepsilon}|^2)
   , \max\limits_{\Gamma_R}P\right\},\quad \ k=\frac{n}{2}  }, \\
   {
   \max \left\{\left[\frac{2k-n}{k(n+1-k)}\right]^2\max\limits_{B_R\setminus\Omega} (u^{\frac{2k}{2k-n}} |D\log f^{2,\varepsilon}|^2)
   , \max\limits_{\Gamma_R}P \right\},\quad \ k>\frac{n}{2}  },  \\
   {
   \max\left\{\left[\frac{n-2k}{k(n+1-k)}\right]^2\max\limits_{B_R\setminus\Omega} \left[(-u)^{-\frac{2k}{n-2k}} |D\log f^{1,\varepsilon}|^2\right]
   , \max\limits_{\Gamma_R}P \right\},\quad \ k<\frac{n}{2}  },
\end{array}} \right.
\end{align}
where $\Gamma_R:=\partial\left(B_R\setminus\Omega \right)$.

\end{theorem}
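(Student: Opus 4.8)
The plan is to run a maximum principle argument on the auxiliary function $P$ in each of the three cases simultaneously, exploiting that $P$ is designed so that its "radial" model (where $u$ equals the corresponding Green function and $|Du|$ is the exact radial derivative) is constant. First I would set $F^{ij}=\frac{\partial}{\partial u_{ij}}\log S_k(D^2u)$ so that the equation reads $\log S_k(D^2u)=\log f^{i,\varepsilon}$, and I would work with $\log P = \log|Du|^2 + \psi(u)$ where $\psi(u) = 2u$ (case $k=\tfrac n2$), $\psi(u)=\tfrac{2(n-k)}{2k-n}\log u$ (case $k>\tfrac n2$), $\psi(u) = -\tfrac{2(n-k)}{n-2k}\log(-u)$ (case $k<\tfrac n2$). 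The key point is that in all cases $\psi'' = -\frac{\psi'^2}{\,2(n-k)/k\,}\cdot(\text{something})$; more precisely the exponents are chosen exactly so that a certain quadratic identity holds, which is what makes the "bad" third-order terms cancel. Suppose $\log P$ attains its interior maximum over $\overline{\Omega_R}$ at a point $x_0$ where $|Du|\neq 0$ (if $Du=0$ somewhere the bound is trivial there, and we will separately show $|Du|>0$ anyway). Rotate coordinates so that $D^2u(x_0)$ is diagonal.

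The main steps at $x_0$: (i) from $(\log P)_i = 0$ we get $\frac{2\sum_l u_l u_{li}}{|Du|^2} = -\psi'(u)u_i$, i.e. $\sum_l u_l u_{li} = -\tfrac12\psi'|Du|^2 u_i$; (ii) plug this into $0\ge F^{ij}(\log P)_{ij}$, expand, and use $F^{ij}u_{\xi\xi ij} = -F^{ij,kl}u_{\xi ij}u_{\xi kl}\ge 0$ together with the concavity of $\log S_k$ (Lemma \ref{concavity}) to discard the fourth-order/second-derivative-of-equation terms in the right direction; (iii) commute derivatives so that $F^{ij}u_{ijl}=(\log f^{i,\varepsilon})_l$ appears, producing the term $\psi' \langle Du, D\log f^{i,\varepsilon}\rangle$; (iv) collect the remaining terms: one gets a combination of $\psi''|Du|^2\cdot(\text{stuff})$, $F^{ii}u_{ii}^2$-type terms controlled by $|Du|^2$ via step (i), and the homogeneity relation $\sum_i F^{ii}u_{ii} = k$ (Euler). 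The choice of $\psi$ is precisely what forces the coefficient of the uncontrolled positive term to vanish, leaving an inequality of the shape
\begin{align*}
0 \;\ge\; c(n,k)\,|Du|^2\,\big(\text{positive factor}\big) \;-\; \big|\psi'(u)\big|\,|Du|\,|D\log f^{i,\varepsilon}|\,\big(\text{same factor}\big),
\end{align*}
from which $|Du|^2 \le \big(\tfrac{1}{c(n,k)}\big)^2 \psi'(u)^2 |D\log f^{i,\varepsilon}|^2 / |Du|^0$, and multiplying by the weight $e^{\psi}$ reproduces exactly the constants $\big[\tfrac{2k-n}{k(n+1-k)}\big]^2$ etc. appearing in \eqref{Gradientestimate}. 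Thus either the max of $P$ is attained on the boundary $\Gamma_R = \partial(B_R\setminus\Omega)$, or it is bounded by the displayed $D\log f^{i,\varepsilon}$ term, which is the assertion.

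The hard part will be step (ii)–(iv): correctly organizing the third-order terms after substituting the first-order condition, and verifying that with the stated exponents the coefficient of the leftover term $\sum_i F^{ii}\big(\sum_l u_l u_{li}/|Du|\big)^2$ (which is not sign-controlled a priori) cancels against the $\psi''$ contribution and the Euler relation $F^{ij}u_{ij}=k$. This is a calculation in the spirit of Chow–Wang \cite{cw2001cpam} and B. Guan \cite{gb2007imrn}, and the only genuinely delicate point is that the concavity inequality $-F^{ij,kl}u_{\xi ij}u_{\xi kl}\ge 0$ must be applied with $\xi$ ranging appropriately (or, equivalently, one uses $F^{ij}(\log|Du|^2)_{ij}\le$ a clean expression directly); I would handle this by differentiating the equation once in the direction $e_l$, multiplying by $u_l$, summing, and using Cauchy–Schwarz only at the very end so as not to lose the structural cancellation. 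Once the differential inequality is in hand, the maximum principle on the compact set $\overline{\Omega_R}$ gives \eqref{Gradientestimate} immediately; the boundary term $\max_{\Gamma_R}P$ is then estimated separately (on $\partial\Omega$ via the barrier $\underline u^{i,\varepsilon}$ and the $C^0$ bounds, on $\partial B_R$ via the explicit form of $\underline u^{i,\varepsilon}=w^{i,\varepsilon}$), which is deferred to the subsequent boundary-gradient lemmas.
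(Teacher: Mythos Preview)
Your overall strategy---maximum principle on $G=\log|Du|^2+\psi(u)$ with the stated choices of $\psi$---matches the paper's, but your execution plan misidentifies the mechanism at two points.

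First, step (ii) rests on a confusion with the second-order estimate. In $F^{ij}(\log|Du|^2)_{ij}$ the highest derivatives of $u$ that appear are third order, namely $u_{lij}$, and these are dispatched \emph{exactly} by differentiating the equation once: $F^{ij}u_{lij}=(\log f)_l$. There are no ``fourth-order/second-derivative-of-equation terms'' here, and the concavity of $\log S_k$ plays no role in this proof; that device belongs to the $D^2u$ estimate in the next subsection.

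Second, and this is the real gap, you do not identify the structural reason the computation closes. At an interior maximum, after rotating so that $Du=u_1e_1$, the first-order condition gives $u_{1i}=0$ for $i\ge2$ and $\lambda_1=u_{11}=-\tfrac12\psi'(u)\,u_1^2$. Since $\psi'>0$ in all three cases, $\lambda_1<0$, and hence the reduced vector $(\lambda|1)=(\lambda_2,\dots,\lambda_n)$ lies in $\Gamma_k$. This is the crucial point (the paper flags it explicitly): it legitimizes the Maclaurin inequality on $(\lambda|1)$. The paper then writes $\lambda_1=\big(f-S_k(\lambda|1)\big)/S_{k-1}(\lambda|1)$, expands $-(k+1)S_{k+1}(\lambda)$ and the $\psi''$-term in these variables, and uses Maclaurin to bound $(k+1)S_{k+1}(\lambda|1)\le \tfrac{k(n-k-1)}{n-k}\,S_k(\lambda|1)^2/S_{k-1}(\lambda|1)$. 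Only \emph{after} this step does the specific exponent in $\psi$ (equivalently, the value of $2\psi''/(\psi')^2$) make the coefficient of $S_k(\lambda|1)^2/S_{k-1}(\lambda|1)$ vanish. What survives are terms carrying a factor of $f$ together with $u_1f_1$; one more Maclaurin application yields $0\ge c_{n,k}\,f\,u_1^2/|u|+u_1f_1$ (with $c_{n,k}=\tfrac{k(n+1-k)}{|2k-n|}$, respectively $\tfrac{n}{2}+1$ when $k=\tfrac n2$), from which the displayed constants in \eqref{Gradientestimate} follow immediately.

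Your plan says ``the choice of $\psi$ forces the coefficient of the uncontrolled term to vanish,'' but without the $\lambda_1<0\Rightarrow(\lambda|1)\in\Gamma_k$ observation and the Maclaurin step on $(\lambda|1)$ there is no such cancellation to arrange, and the raw expression you would obtain from steps (i)--(iv) as written will not close.
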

\begin{proof}For simplicity, we use $f$ instead of $f^{1,\varepsilon}$, $f^{2,\varepsilon}$ or $f^{3,\varepsilon}$ during the proof.
Consider the function $G=\log|Du|^2+g(u)$.
\begin{align*}
0=G_i=&\frac{|Du|^2_i}{|Du|^2}+g'u_i=\frac{2u_ku_{ki}}{u_1^2}+g'u_i\\
=&\frac{2u_{1i}}{u_1}+g'u_i.
\end{align*}
Then we have
\begin{align*}
u_{1i}=0, i\ge 2, \lambda_1=u_{11}=-\frac{g'}{2}u_1^2.
\end{align*}
In the following, we will take $g$ in three cases
\begin{align}\label{threecase1}
g(u)=
\left\{ {\begin{array}{*{20}c}
   {2u, \quad \quad \quad\quad \ k=\frac{n}{2}  }, \\
   {{\frac{2(n-k)}{(2k-n)}}\log u, \ \quad\  k>\frac{n}{2} }, \\
   {{-\frac{2(n-k)}{(n-2k)}}\log(-u), \ k<\frac{n}{2} }.  \\
\end{array}} \right.
\end{align}
In these three cases, we always have $g'>0$. This implies $\lambda_1<0$ and thus $(\lambda|1)\in \Gamma_{k}$ which is crucial during the proof.

Thus $u_{ij}$ is diagonal at $x_0$ and
\begin{align*}
0\ge F^{ii}G_{ii}=&\frac{F^{ii}|Du|^2_{ii}}{|Du|^2}-F^{ii}\left||Du|^2_i\right|^2
+g''F^{ii}u_i^2+g'F^{ii}u_{ii}\\
=&\frac{2F^{ii}u_{ki}^2+2F^{ii}u_ku_{kii}}{|Du|^2}-F^{ii}(g')^2u_i^2
+g''u_i^2+g'u_{ii}\\
=&\frac{2F^{ii}u_{ii}^2+2F^{ii}u_1u_{1ii}}{u_1^2}-(g')^2F^{11}u_1^2
+g''F^{11}u_1^2+g'F^{ii}u_{ii}\\
=&2{u_1^{-2}}\Big({S_1(\lambda)f-(k+1)S_{k+1}(\lambda)
+u_1f_1}\Big)
+\Big(g''-(g')^2\Big)S_{k-1}({\lambda|1})u_1^2+kg'f\\
=&2{u_1^{-2}}\left({S_1(\lambda)f-(k+1)S_{k+1}(\lambda)
+u_1f_1}
+\frac{1}{2}\Big(g''-(g')^2\Big)S_{k-1}({\lambda|1})u_1^4+\frac{k}{2}g'u_1^2f\right)\\
=&2{u_1^{-2}}\left({S_1(\lambda)f-(k+1)S_{k+1}(\lambda)
+u_1f_1}
+2\Big(\frac{g''}{(g')^2}-1\Big)S_{k-1}({\lambda|1})\lambda_1^2-kf\lambda_1\right),
\end{align*}
where we use $\lambda_1=-\frac{g'}{2}u_1^2$.\\
Therefore
\begin{align*}
0\ge \frac{1}{2}{u_1^{2}}F^{ii}G_{ii}=S_1(\lambda)f-(k+1)S_{k+1}(\lambda)
+u_1f_1
+2\Big(\frac{g''}{(g')^2}-1\Big)S_{k-1}({\lambda|1})\lambda_1^2-kf\lambda_1\\
=S_1(\lambda|1)f-(k-1)f\lambda_1-(k+1)S_{k+1}(\lambda)
+2\Big(\frac{g''}{(g')^2}-1\Big)S_{k-1}({\lambda|1})\lambda_1^2+u_1f_1.
\end{align*}
Sine $\lambda_1S_{k-1}(\lambda|1)+S_{k}(\lambda|1)=f$, we have $\lambda_1=\frac{f}{S_{k-1}(\lambda|1)}-
\frac{S_{k}(\lambda|1)}{S_{k-1}(\lambda|1)}
$.
We first manipulate the term $-(k-1)f\lambda_1$.
\begin{align}
-(k-1)f\lambda_1
=&-(k-1)f\Big(\frac{f}{S_{k-1}(\lambda|1)}-
\frac{S_{k}(\lambda|1)}{S_{k-1}(\lambda|1)}\Big)\notag\\
=&-(k-1)\frac{f^2}{S_{k-1}(\lambda|1)}
+(k-1)f\frac{S_{k}(\lambda|1)}{S_{k-1}(\lambda|1)}.
\end{align}
Next we manipulate $-(k+1)S_{k+1}(\lambda)$.
\begin{align}
-(k+1)S_{k+1}(\lambda)=&-(k+1)\lambda_1S_{k}(\lambda|1)
-(k+1)S_{k+1}(\lambda|1)\notag\\
=&-(k+1)S_{k}(\lambda|1)\Big(\frac{f}{S_{k-1}(\lambda|1)}-
\frac{S_{k}(\lambda|1)}{S_{k-1}(\lambda|1)}\Big)
-(k+1)S_{k+1}(\lambda|1)\notag\\
=&-(k+1)f\frac{S_{k}(\lambda|1)}{S_{k-1}(\lambda|1)}
+(k+1)\frac{S_{k}^2(\lambda|1)}{S_{k-1}(\lambda|1)}-(k+1)S_{k+1}(\lambda|1)
\end{align}

At last, we manipulate the trouble term $2\Big(\frac{g''}{(g')^2}-1\Big)S_{k-1}({\lambda|1})\lambda_1^2$.
\begin{align}
&2\Big(\frac{g''}{(g')^2}-1\Big)S_{k-1}({\lambda|1})\lambda_1^2\notag\\
&=2\Big(\frac{g''}{(g')^2}-1\Big)S_{k-1}({\lambda|1})\Big(\frac{f}{S_{k-1}(\lambda|1)}-
\frac{S_{k}(\lambda|1)}{S_{k-1}(\lambda|1)}\Big)^2\notag\\
=&2\Big(\frac{g''}{(g')^2}-1\Big)\frac{f^2}{S_{k-1}(\lambda|1)}
-4\Big(\frac{g''}{(g')^2}-1\Big)f\frac{S_{k}(\lambda|1)}
{S_{k-1}(\lambda|1)}+2\Big(\frac{g''}{(g')^2}-1\Big)\frac{S_{k}^2
(\lambda|1)}{S_{k-1}(\lambda|1)}.
\end{align}
Substitute the above three equality into the original terms, we have
\begin{align}\label{Final1}
0\ge& \frac{1}{2}{u_1^{2}}F^{ii}G_{ii}\notag\\
=&\Big(2\frac{g''}{(g')^2}+k-1\Big)\frac{S_{k}^2
(\lambda|1)}{S_{k-1}(\lambda|1)}-(k+1)S_{k+1}(\lambda|1)+
2\Big(1-\frac{2g''}{(g')^2}\Big)f\frac{S_{k}
(\lambda|1)}{S_{k-1}(\lambda|1)}\notag\\
&+\Big(2\frac{g''}{(g')^2}-k-1\Big)\frac{f^2}{S_{k-1}(\lambda|1)}
+S_1(\lambda|1)f+u_1f_1\notag\\
\ge
&\Big(2\frac{g''}{(g')^2}+k-1-\frac{k(n-k-1)}{n-k}\Big)\frac{S_{k}^2
(\lambda|1)}{S_{k-1}(\lambda|1)}+
2\Big(1-\frac{2g''}{(g')^2}\Big)f\frac{S_{k}
(\lambda|1)}{S_{k-1}(\lambda|1)}\notag\\
&+\Big(2\frac{g''}{(g')^2}-k-1\Big)\frac{f^2}{S_{k-1}(\lambda|1)}
+S_1(\lambda|1)f+u_1f_1,
\end{align}
where in the last inequality we use the Maclaurin inequality:
\[
\frac{S_{k+1}(\lambda|1)/C_{n-1}^{k+1}}{S_{k}(\lambda|1)/C_{n-1}^{k}}\le
\frac{S_{k}(\lambda|1)/C_{n-1}^{k}}{S_{k-1}(\lambda|1)/C_{n-1}^{k-1}}.
\]

 \textbf{ Case1: $k<\frac{n}{2}$}, \\
Since the foundamental solution is $-|x|^{2-\frac{n}{k}}$ and its gradient is $\sim |x|^{1-\frac{n}{k}}$.
We take $g(u)=a\log(-u)$, where $a=-\frac{2(n-k)}{n-2k}<0$.
$
2\frac{g''}{(g')^2}=-\frac{2}{a}=\frac{n-2k}{n-k}$. Substituting it into  \eqref{Final}, we have

\begin{align}\label{Final}
0\ge& \frac{1}{2}{u_1^{2}}F^{ii}G_{ii}\ge\Big(2\frac{g''}{(g')^2}+k-1-\frac{k(n-k-1)}{n-k}\Big)\frac{S_{k}^2
(\lambda|1)}{S_{k-1}(\lambda|1)}\notag\\
&+
2\Big(1-\frac{2g''}{(g')^2}\Big)f\frac{S_{k}
(\lambda|1)}{S_{k-1}(\lambda|1)}+\Big(2\frac{g''}{(g')^2}-k-1\Big)\frac{f^2}{S_{k-1}(\lambda|1)}
+S_1(\lambda|1)f+u_1f_1\notag\\
=&\frac{2k}{n-k}f\frac{S_{k}
(\lambda|1)}{S_{k-1}(\lambda|1)}-k\frac{n+1-k}
{n-k}\frac{f^2}{S_{k-1}(\lambda|1)}
+S_1(\lambda|1)f+u_1f_1\notag\\
\ge&f\Big(\frac{k}{n-1}S_1(\lambda|1)-k\frac{n+1-k}
{n-k}\lambda_1  \Big)+u_1f_1 (*)\notag\\
\ge & \frac{a}{2}k\frac{n+1-k}
{n-k}=\frac{k(n+1-k)}
{n-2k} f\frac{u_1^2}{-u} +u_1f_1,
\end{align}
where in (*), we use the same argument as the $k>\frac{n}{2}$ case.\\
From this, we have
$
u_1\le \frac{n-2k}{k(n+1-k)}(-u)|D\log f|.
$
Then we have
\begin{align*}
|Du|^2(-u)^a\le u_1^2(-u)^a\le\Big(\frac{n-2k}{k(n+1-k)}\Big)^2(-u)^{\frac{2k}{2k-n}}|D\log f|^2.
\end{align*}
\textbf{Case2: $k>\frac{n}{2}$.}\\
 We take $g=a\log u$, where $a=\frac{2(n-k)}{2k-n}$ (since the foudament solution is $|x|^{2-\frac{n}{k}}$ ). By \eqref{Final}, we have
\begin{align*}
0\ge& \frac{1}{2}{u_1^{2}}F^{ii}G_{ii}\notag\\
=&2\left[1-\frac{2g''}{(g')^2}\right]f\frac{S_{k}
(\lambda|1)}{S_{k-1}(\lambda|1)}
+\left[2\frac{g''}{(g')^2}-k-1\right]\frac{f^2}{S_{k-1}(\lambda|1)}
+S_1(\lambda|1)f+u_1f_1\\
=&2(1+\frac{2}{a})f\frac{S_{k}
(\lambda|1)}{S_{k-1}(\lambda|1)}-(1+\frac{2}{a}+k)\frac{f^2}{S_{k-1}(\lambda|1)}
+S_1(\lambda|1)f+u_1f_1\\
=&\frac{2k}{n-k}f\frac{S_{k}
(\lambda|1)}{S_{k-1}(\lambda|1)}-k\frac{n+1-k}{n-k}\frac{f^2}{S_{k-1}(\lambda|1)}
+S_1(\lambda|1)f+u_1f_1\\
=&\frac{2k}{n-k}f\frac{S_{k}
(\lambda|1)}{S_{k-1}(\lambda|1)}-k\frac{n+1-k}
{n-k}f\left[\lambda_1+\frac{S_{k}
(\lambda|1)}{S_{k-1}(\lambda|1)}\right]
+S_1(\lambda|1)f+u_1f_1\\
=&f\left[S_1(\lambda|1)+\frac{k(k+1-n)}{n-k}\frac{S_{k}
(\lambda|1)}{S_{k-1}(\lambda|1)}-k\frac{n+1-k}
{n-k}\lambda_1\right]+u_1f_1\\
\ge& f\left[\frac{k}{n-1}S_1(\lambda|1)-k\frac{n+1-k}
{n-k}\lambda_1   \right]+u_1f_1\\
\ge& c_{n,k}f\frac{u_1^2}{u} +u_1f_1,
\end{align*}
where $c_{n,k}=\frac{a}{2}k\frac{n+1-k}
{n-k}=\frac{k(n+1-k)}
{2k-n}>0$.

This gives
\begin{align*}
u_1\le \frac{2k-n}{k(n+1-k)}u|D\log f|.
\end{align*}
Therefore we have
\begin{align}\label{case2}
|Du|^2u^a=u_1^2u^a\le \Big(\frac{2k-n}{k(n+1-k)}\Big)^2u^{\frac{2k}{2k-n}}|D\log f|^2.
\end{align}

\textbf{Case3: $k=\frac{n}{2}$},\\
In this case, we must choose $g(u)=2u$ (it is uniquely determined by the foundamental solution  $\log |x|$), then from \eqref{Final1}, we have
\begin{align*}
0\ge& \frac{1}{2}{u_1^{2}}F^{ii}G_{ii}\ge 2f\frac{S_{k}(\lambda|1)}{S_{k-1}(\lambda|1)}
-(\frac{n}{2}+1)\frac{f^2}{S_{k-1}(\lambda|1)}
+S_1(\lambda|1)f+u_1f_1\\
=&f\Big(S_1(\lambda|1)+2\frac{S_{k}(\lambda|1)}{S_{k-1}(\lambda|1)}
-(\frac{n}{2}+1)\frac{f}{S_{k-1}(\lambda|1)}\Big)+u_1f_1\\
=&f\Big(S_1(\lambda|1)+2\frac{S_{k}(\lambda|1)}{S_{k-1}(\lambda|1)}
-(\frac{n}{2}+1)\Big(\lambda_1+
\frac{S_{k}(\lambda|1)}{S_{k-1}(\lambda|1)}\Big)\Big)+u_1f_1\\
=&f\Big(-(\frac{n}{2}+1)\lambda_1+S_1(\lambda|1)
-(\frac{n}{2}-1)
\frac{S_{k}(\lambda|1)}{S_{k-1}(\lambda|1)}\Big)+u_1f_1\\
\ge& f\Big((\frac{n}{2}+1)\lambda_1+\frac{n-2}{2(n-1)}S_1(\lambda|1)
\Big)+u_1f_1\\
\ge& f(\frac{n}{2}+1)u_1^2+u_1f_1.
\end{align*}
This implies
$
u_1\le -(\log f)_1\le |D\log f|.
$
Thus we have
\begin{align*}
|Du|^2e^{2u}\le u_1^2 e^{2u}\le e^{2u} |D\log f|^2.
\end{align*}

\end{proof}
\subsubsection\textbf{\textbf{{ Boundary gradient estimates}}}

We always assume $R>>100(1+R_0)$. To prove the boundary gradient estimates, we will construct upper barriers on $\partial \Omega$ and $\partial B_R$ respectively.

\emph{\textbf{Case1: $k<\frac{n}{2}$}}\\
Let  $h\in C^{\infty}(\overline\Omega_{R_0})$ be the unique solution of
\begin{align*}
\left\{\begin{aligned}
\Delta h=&0,\ \text{in}\  \Omega_{R_0},\\
h=&-1, \ \text{on}\ \partial \Omega,\\
h=&-r_0^{\frac{n-2k}{k}}{R_0}^{-\frac{n-2k}{k}},
\ \text{on}\ \partial B_{R_0}.
\end{aligned}
\right.
\end{align*}
By maximum principle, $\underline{u}^{1,\varepsilon}\le u\le h^{R}$ in $\overline\Omega_{R_0}$. Then for any $x\in\partial \Omega$
\begin{align*}
0>-\tau_0=\tau_0\Phi_{\nu}(x)={\underline{u}^{1,\varepsilon}}_{\nu}(x)\ge u_{\nu}(x)\ge h_{\nu}(x),
\end{align*}
where  $\nu$ is the outward normal  of $\partial \Omega_R$ (inward normal of $\p\Omega$). Then
\begin{align}\label{0714gradient1.1}
0<\tau_0\le \tau_0\max\limits_{\partial\Omega}|Du|=
\max\limits_{\partial\Omega}(-u_{\nu})\le \max_{\partial\Omega}|h_{\nu}|\le C.
\end{align}
This proves that $P$ is uniformly bounded on $\partial \Omega$.

Next we show $P$ is uniformly bounded on $\partial B_R$.
Indeed, we consider
\begin{align}\label{0727rho1}
\rho^{\varepsilon,R}=-a^{\varepsilon,R}r_0^{\frac{n-2k}{k}}|x|^{-\frac{n-2k}{k}}+a^{\varepsilon,R}-1,
\end{align}
where $a^{\varepsilon,R}$ is defined as
follows
\begin{align}
	a^{\varepsilon,R}=\left(1-\left(\frac{r_0}{R_0}\right)^{\frac{n-2k}{k}}\right)^{-1}\left(\Big(\frac{R_0^2+\varepsilon^2}{R^2+\varepsilon^2}\Big)^{\frac{n-2k}{2k}}+1\right)> 1.
	\end{align}
Then we have $\rho^{\varepsilon,R}=u$ on $\partial B_R$ and $\underline{u}^{1,\varepsilon}\le u\le \rho^{\varepsilon,R}$ in $\Omega_R$.
Then for any $x\in\p B_R$
\begin{align}\label{457141}
CR^{-\frac{n-k}{k}}\ge{\underline{u}^{1,\varepsilon}}_{\nu}(x)\ge u_{\nu}(x)\ge \rho^{\varepsilon, R}_{\nu}(x)=a^{\varepsilon,R}\frac{n-2k}{k}r_0^{\frac{n-2k}{k}}R^{-\frac{n-k}{k}}\ge cR^{-\frac{n-k}{k}},
\end{align}
where $C$ and $c$ are uniformly positive constants.
Thus
\begin{align}\label{0714gradient1.2}
	cR^{-\frac{n-k}{k}}\le \max\limits_{\p B_R}|Du|=\max\limits_{\p B_R} u_{\nu}\le CR^{-\frac{n-k}{k}}.
\end{align}
Combing \eqref{0714gradient1.1} with \eqref{0714gradient1.2}, we have
\begin{align}\label{0714gradient1}
	c|x|^{1-\frac{n}{k}}\le |Du|\le C|x|^{1-\frac{n}{k}} \ \text{on}\ \partial\Omega_R
\end{align}
This implies $P$ is uniformly bounded on $\partial B_R$.

 In conclusion, when $k<\frac{n}{2}$, $P$ is uniformly bounded in $\overline\Omega_R$.

 \emph{\textbf{Case 2: $k>\frac{n}{2}$}}\\
 Let  $h\in C^{\infty}(\overline\Omega_{R_0})$ be the unique solution of
 \begin{align*}
 	\left\{\begin{aligned}
 	\Delta h=&0,\ \text{in}\  \Omega_{R_0},\\
 		h=&1, \ \text{on}\ \partial \Omega,\\
 		h=&R_0^{\frac{2k-n}{k}}-r_0^{\frac{2k-n}{k}}+1,
 		\ \text{on}\ \partial B_{R_0}.
 	\end{aligned}
 	\right.
 \end{align*}
 By maximum principle, $\underline{u}^{2,\varepsilon}\le u\le h$ in $\overline\Omega_{R_0}$.
Then
\begin{align}\label{0714gradient2.1}
	0<c\le |Du|\le C, \ \text{on}\ \partial\Omega
	\end{align}
 Thus we have $P$ is uniformly bounded on $\partial\Omega$ .

 We construct the upper barrier of $u$ in $\Omega_R$ as follows
 \begin{align}\label{0727rho2}
 	\rho^{\varepsilon,R}=a^{\varepsilon,R}\left(|x|^{\frac{2k-n}{k}}-r_0^{\frac{2k-n}{k}}\right)+1,
 \end{align}
 where $a^{\varepsilon,R}$ is defined by
 \begin{align}\label{a0715}
 	a^{\varepsilon,R}=\left(R^{\frac{2k-n}{k}}-r_0^{\frac{2k-n}{k}}\right)^{-1}\left((R^2+\varepsilon^2)^{1-\frac{n}{2k}}-(R_0^2+\varepsilon^2)^{1-\frac{n}{2k}}\right)> a_0>0,
 \end{align}
 where $a_0>0$ is independent of $\varepsilon$ and $R$. Then we have
 $\rho^{\varepsilon,R}=u$ on $\partial B_R$ and $\underline{u}^{2,\varepsilon}\le u\le \rho^{\varepsilon,R}$ in $\Omega_R$.
Thus
 \begin{align}\label{0714gradient2.2}
 	cR^{-\frac{n-k}{k}}\le \max\limits_{\p B_R}|Du|=\max\limits_{\p B_R} u_{\nu}\le CR^{-\frac{n-k}{k}}.
 \end{align}
Combing \eqref{0714gradient2.1} with \eqref{0714gradient2.2}, we have
\begin{align}\label{0714gradient2}
	c|x|^{1-\frac{n}{k}}\le |Du|\le C|x|^{1-\frac{n}{k}} \ \text{on}\ \partial\Omega_R
\end{align}
 This implies $P$ is uniformly bounded on $\partial B_R$.
 In conclusion, when $k>\frac{n}{2}$, $P$ is uniformly bounded in $\overline\Omega_R$.

  \emph{\textbf{Case 3: $k=\frac{n}{2}$}}\\
 Let  $h\in C^{\infty}(\overline\Omega_{R_0})$ be the unique solution of
 \begin{align*}
 	\left\{\begin{aligned}
 		\Delta h=&0,\ \text{in}\  \Omega_{R_0},\\
 		h=&0, \ \text{on}\ \partial \Omega,\\
 		h=&\log R_0-\log r_0,
 		\ \text{on}\ \partial B_{R_0}.
 	\end{aligned}
 	\right.
 \end{align*}
 By maximum principle, $\underline{u}^{3,\varepsilon}\le u\le h$ in $\overline\Omega_{R_0}$.
 Then
 \begin{align}\label{0714gradient3.1}
 	0<c\le |Du|\le C, \ \text{on}\ \partial\Omega
 \end{align}
 Thus  $P$ is uniformly bounded on $\partial\Omega$ .

 We construct the upper barrier of $u$  in $\Omega_R$ as follows
 \begin{align*}
 	\rho^{\varepsilon,R}=a^{\varepsilon, R}\left(\log|x|-\log r_0\right),
 \end{align*}
 where $a^{\varepsilon, R}$ is defined by
 \begin{align}
 	a^{\varepsilon, R}=\left(\log R-\log r_0\right)^{-1}\frac{1}{2}\log\frac{ R^2+\varepsilon^2}{ R_0^2+\varepsilon^2}> a_0>0,
 \end{align}
 where $a_0>0$ is independent of $\varepsilon$ and $R$. Then we have
 $\rho^{\varepsilon,R}=u$ on $\partial B_R$ and $\underline{u}^{3,\varepsilon}\le u\le \rho^{\varepsilon,R}$ in $\Omega_R$.
 Thus
 \begin{align}\label{0714gradient3.2}
 	cR^{-1}\le \max\limits_{\p B_R}|Du|=\max\limits_{\p B_R} u_{\nu}\le CR^{-1}.
 \end{align}
Combing \eqref{0714gradient3.1} with \eqref{0714gradient3.2}, we have
\begin{align}\label{0714gradient3}
	c|x|^{-1}\le |Du|\le C|x|^{-1} \ \text{on}\ \partial\Omega_R
	\end{align}
 This implies $P$ is uniformly bounded on $\partial \Omega_R$.
 In conclusion, when $k=\frac{n}{2}$, $P$ is uniformly bounded in $\overline\Omega_R$.
 \subsubsection{\textbf{Positive lower bound of $|Du|$}.}
 \begin{lemma}
 	Let $u$ be the $k$-convex solution of the approximating equation \eqref{case1EquaAppr}, \eqref{case2EquaAppr} or \eqref{case3EquaAppr}.
 For sufficiently large $R$ and sufficiently small $\varepsilon$,	there exists a uniform constant $c_0$ such that for any $x\in \overline\Omega_R$
 	\begin{align}\label{0714gl1}
 		x\cdot Du(x)\ge
 		\left\{\begin{aligned} &c_0 |x|^{2-\frac{n}{k}}, \ \text{if} \ k<\frac{n}{2}  \ \text{or}\  k>\frac{n}{2},\\
 			&c_0,\qquad \text{if} \ k=\frac{n}{2}.
 			\end{aligned}
 			\right.
 	\end{align}
 	In particular,
 	\begin{align}\label{0715gl1}
 		| Du(x)|\ge
 		\left\{\begin{aligned} &c_0 |x|^{1-\frac{n}{k}}, \ \text{if} \ k<\frac{n}{2}  \ \text{or}\  k>\frac{n}{2},\\
 			&c_0|x|^{-1},\quad \text{if} \ k=\frac{n}{2}.
 		\end{aligned}
 		\right.
 	\end{align}
 \end{lemma}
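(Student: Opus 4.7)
The plan is to apply the minimum principle to an auxiliary function that is linear in $x\cdot Du$ and $u$. For each case I would take
\[
\Psi = x\cdot Du + \alpha u\ (\text{Case 1}),\quad \Psi = x\cdot Du - \alpha\ (\text{Case 3}),\quad \Psi = x\cdot Du - \alpha u\ (\text{Case 2}),
\]
for a small constant $\alpha>0$ to be fixed. Showing $\Psi\ge 0$ throughout $\Omega_R$, combined with the $C^0$ lower bounds on $|u|$ obtained in the previous subsection, yields $x\cdot Du\ge\alpha(-u)\ge c_0|x|^{2-n/k}$ in Case 1, $x\cdot Du\ge\alpha u\ge c_0|x|^{2-n/k}$ in Case 2, and $x\cdot Du\ge c_0$ in Case 3.

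The interior computation proceeds as follows. The linearized operator $F^{ij}:=\partial S_k/\partial u_{ij}$ is positive definite because the approximating $u$ is strictly $k$-convex. From $F^{ij}u_{ij}=kf^{\varepsilon}$ and $F^{ij}u_{l,ij}=\partial_l f^{\varepsilon}$ (differentiating the equation in $x_l$) one gets $F^{ij}(x\cdot Du)_{ij}=2kf^{\varepsilon}+x\cdot Df^{\varepsilon}$, and the explicit formula $f^{\varepsilon}=c_{n,k}\varepsilon^2(|x|^2+\varepsilon^2)^{-n/2-1}$ gives $x\cdot Df^{\varepsilon}=-(n+2)f^{\varepsilon}|x|^2/(|x|^2+\varepsilon^2)$. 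Combining with $\alpha\,F^{ij}u_{ij}=\alpha k f^{\varepsilon}$, I obtain
\[
F^{ij}\Psi_{ij}=f^{\varepsilon}\Bigl[(2\pm\alpha)k-(n+2)\tfrac{|x|^2}{|x|^2+\varepsilon^2}\Bigr],
\]
with the $\alpha$-contribution absent in Case 3. Since $|x|\ge r_0$ in $\Omega_R$, for $\varepsilon$ sufficiently small $|x|^2/(|x|^2+\varepsilon^2)$ is uniformly close to $1$, and one verifies $F^{ij}\Psi_{ij}\le 0$ by choosing: in Case 1, $\alpha<(n+2-2k)/k$ (positive since $k<n/2$); in Case 3, no further constraint is needed because $2k=n<n+2$; in Case 2, $\alpha$ in the open interval $\bigl(\max\{0,(2k-n-2)/k\},(2k-n)/k\bigr)$, which is nonempty of length $2/k$.

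For the boundary verification, on $\partial\Omega$ the function $u$ equals a constant and $x\cdot Du=(x\cdot\nu^{\Omega})|Du|$, where the convexity of $\Omega$ with $0\in\Omega$ forces $x\cdot\nu^{\Omega}\ge c_{\Omega}\ge r_0>0$, and the sub-barrier $\underline{u}^{i,\varepsilon}$ provides $|Du|\ge\tau_0>0$; thus $x\cdot Du\ge c_{\Omega}\tau_0$ on $\partial\Omega$, and taking $\alpha$ no larger than this positive constant gives $\Psi\ge 0$ there. On $\partial B_R$, since $u=\rho^{\varepsilon,R}$ on $\partial B_R$ and $u\le\rho^{\varepsilon,R}$ in $\Omega_R$, one has $u_\nu\ge\rho^{\varepsilon,R}_\nu$, so $x\cdot Du=R\,u_\nu\ge R\,\rho^{\varepsilon,R}_\nu$ of the correct order, computed explicitly from the radial form of $\rho^{\varepsilon,R}$ (of size $R^{(2k-n)/k}$ in Cases 1 and 2, and of size $a_0>0$ in Case 3). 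With these boundary checks, the minimum principle yields $\Psi\ge 0$ in $\Omega_R$.

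The main technical obstacle is Case 2 with $k>(n+2)/2$, where the interior sign of $F^{ij}\Psi_{ij}$ forces $\alpha\ge(2k-n-2)/k>0$, a fixed positive constant, which may conflict with the boundary constraint $\alpha\le c_{\Omega}\tau_0$ on $\partial\Omega$. To overcome this one either sharpens the boundary gradient lower bound through a refined sub-barrier construction, or splits $\Omega_R$ into an inner region $\{|x|\le R_*\}$ where a direct uniform positive lower bound on $x\cdot Du$ is extracted by continuity together with a Hopf-type argument for strictly $k$-convex solutions, and an outer region $\{|x|\ge R_*\}$ where $|x|^2/(|x|^2+\varepsilon^2)$ is even closer to $1$ and the minimum principle with the admissible $\alpha$ applies directly.
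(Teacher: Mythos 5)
Your auxiliary-function strategy is the same as the paper's and the interior computations in Cases~1 and~3 are correct. In Case~1 the paper uses $H = x\cdot Du + b_1 u$ (your $\Psi$ with small $\alpha = b_1$), shows $F^{ij}H_{ij}\le 0$ for $\varepsilon$ small, and checks $H\ge 0$ on $\partial\Omega$ and $\partial B_R$ exactly as you do; Case~3 is $H = x\cdot Du - b_3$ and is immediate.

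The gap you flag in Case~2 is real, and your proposed workarounds do not close it. The ``refined sub-barrier'' idea fails because the lower bound on $|Du|$ at $\partial\Omega$ is dictated by the geometry of $\Omega$ and cannot be inflated to exceed the fixed interior threshold $(2k-n-2)/k$. The inner/outer splitting is circular: after carving out $\{|x|\le R_*\}$, the minimum principle on $\{|x|\ge R_*\}$ still needs $\Psi\ge 0$ on the interface $\{|x| = R_*\}$, which is precisely the kind of bound you are trying to establish, and ``continuity plus a Hopf-type argument'' does not supply a bound uniform in $\varepsilon$ and $R$.

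The paper's fix is a small but decisive change of auxiliary function: take
\begin{align*}
H = x\cdot Du - b_2(u-1) - a_2,\qquad 0 < a_2 < \tfrac{b_2}{2},
\end{align*}
instead of $x\cdot Du - \alpha u$. Since $u\equiv 1$ on $\partial\Omega$, the term $b_2(u-1)$ vanishes there, so the boundary check on $\partial\Omega$ only requires $a_2$ to be small, while the constant $-a_2$ drops out of $F^{ij}H_{ij}$, so the interior sign condition constrains only $b_2$; this decouples the two competing requirements on $\alpha$ that created your obstruction. The paper takes $b_2 = \frac{2k-n}{k}-\frac{1}{2k}$, which satisfies both the interior inequality and the check on $\partial B_R$, and then $H\ge 0$ gives
\begin{align*}
x\cdot Du \ge b_2(u-1) + a_2 \ge \tfrac{a_2}{2}\,u \ge c\,|x|^{\frac{2k-n}{k}},
\end{align*}
using $u\ge 1$ and $b_2 > a_2/2$. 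You should replace your Case~2 ansatz with this shifted one; the rest of your argument then goes through verbatim. (Minor normalization note: the paper linearizes $\log S_k$ rather than $S_k$, so its identities read $F^{ij}u_{ij} = k$ and $F^{ij}(x\cdot Du)_{ij} = 2k + x\cdot D\log f^\varepsilon$, but this is cosmetic and does not affect the sign analysis.)
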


 \begin{proof}
 	\emph{Case 1: $k< \frac{n}{2}$}
 	
 	We consider the function $H:=x\cdot Du(x)+b_1 u$. Recall $F^{ij}=\frac{\partial }{\partial u_{ij}}(\log S_k(D^2 u))$.
 	
 	Direct manipulation gives
 	\begin{align}
 		F^{ij}H_{ij}=(2+b_1)k+x_m\tilde f_m,
 	\end{align}
 	Recall $\tilde f=\log f$, we have
 	\begin{align}
 		x_m\tilde f_m=-(n+2)|x|^2(|x|^2+\varepsilon^2)^{-1}.
 	\end{align}
 	Then if $b_1<k^{-1}$, we have
 	\begin{align}\label{0714gl2}
 		F^{ij}H_{ij}=&(|x|^2+\varepsilon^2)^{-1}\left(\Big((2+b_1)k-(n+2)\Big)|x|^2+(2+b_1)k\varepsilon^2\right)<0,
 	\end{align}
 By maximum principle, $H\ge \min_{\partial \Omega_R} H $. By choosing $b_1$ sufficiently small, we can prove $\min_{\partial \Omega_R} H>0$.

 Indeed, for any  $x\in \partial \Omega$, since $\Omega$ is convex, we have
 \begin{align}
 	H(x)=&x\cdot Du(x)-b_1
 	=(x\cdot \nu(x))|Du(x)|-b_1\notag\\
 	\ge& \min_{\partial\Omega}(x\cdot \nu(x))c-b_1>0,
 	\end{align}
 where the last term is positive if we choose $b_1<\min_{\partial\Omega}(x\cdot \nu(x))c$ and $\nu(x)$ is the outward unit normal vector of $\Omega$ at $x\in\p\Omega$.

 Indeed, for any  $x\in \partial B_R$, we have
 \begin{align}
 	H(x)=&x\cdot Du(x)-b_1(R_0^2+\varepsilon^2)^{\frac{n}{k}-2}(R^2+\varepsilon^2)^{2-\frac{n}{k}}\notag\\
 	=&Ru_{\nu}-b_1(R_0^2+\varepsilon^2)^{\frac{n}{k}-2}(R^2+\varepsilon^2)^{\frac{2k-n}{2k}}\notag\\
 	\ge& cR^{\frac{2k-n}{k}}-C_1b_1R^{\frac{2k-n}{k}}\notag\\
 	>&0,
 \end{align}
where we require $b_1<\frac{c}{2C_1}$.
In conclusion, we prove $H>0$ in $\overline \Omega_R$ and thus we prove \eqref{0714gl1}.
 	
 	\emph{Case 2: $k> \frac{n}{2}$}
 	
 	 We consider the function $H:=x\cdot Du(x)-b_2 (u-1)-a_2$, where $ b_2$ and $a_2$ are  positive constants to be determined later with $a_2<\frac{b_2}{2}$

For any $x\in {\partial \Omega}$, since $u(x)=1$, we have $H(x)=x\cdot Du(x)-a_2\ge c(x\cdot\nu(x))-a_2>0$ if $a_2<\min_{\p\Omega}(x\cdot\nu(x))c$ is small enough.

For any $x\in {\partial B_R} $, recall the upper barrier  $\rho^{\varepsilon,R}$ in \eqref{0727rho2} and  $a^{\varepsilon,R}$ in \eqref{a0715}, we have
\begin{align*}
	H(x)=&Ru_{\nu}-b_2\left(\Big(R^2+\varepsilon^2\Big)^{\frac{2k-n}{2k}}-\Big(R_0^2+\varepsilon^2\Big)^{\frac{2k-n}{2k}}-1\right)-a_2\\
\ge& R\rho_{\nu}^{\varepsilon,R}-b_2\Big(R^2+\varepsilon^2\Big)^{\frac{2k-n}{2k}}\\
	=&\frac{2k-n}{k}a^{\varepsilon,R}R^{\frac{2k-n}{k}}-b_2\Big(R^2+\varepsilon^2\Big)^{\frac{2k-n}{2k}}\\
	=&R^{\frac{2k-n}{k}}\left(\frac{2k-n}{k}a^{\varepsilon,R}-b_2\Big(1+\varepsilon^2R^{-2}\Big)^{\frac{2k-n}{2k}}\right).
	\end{align*}
If we take $b_2=\frac{2k-n}{k}-\frac{1}{2k}>0$ (since $2k-n\ge 1$), the above is positive since $a^{\varepsilon,R}$ is close to $1$ for $R$ sufficiently large.
For such $b_2$, we have
 \begin{align}\label{0714gl3}
 	F^{ij}H_{ij}=&(|x|^2+\varepsilon^2)^{-1}\left(\Big((2-b_2)k-(n+2)\Big)|x|^2+(2-b_2)k\varepsilon^2\right)
 	\notag\\
 	\le& (|x|^2+\varepsilon^2)^{-1}\Big(-|x|^2+(n+1)\varepsilon^2\Big)\notag\\
 	<&0,
 \end{align}
where we assume $\varepsilon$ small enough (note that $|x|\ge r_0$ for $x\in\Omega^c$).

By maximum principle, we have
$H>\min_{\partial \Omega_R} H>0$. Thus we get for any $x\in \overline\Omega_R$,

\begin{align*}
	x\cdot Du(x)\ge& b_2(u-1)+a_2\notag\\
	\ge& \frac{a_2}{2}u
	\ge\frac{a_2}{4(1+C)} |x|^{\frac{2k-n}{k}},
	\end{align*}
where we use $u\ge \max\{|x|^{\frac{2k-n}{k}}-C, 1\}$.

\emph{Case 3: $k=\frac{n}{2}$}

We consider $H=x\cdot Du(x)-b_3$ which is positive on the boundary of $\overline \Omega_R$ if we take $b_3$ small enough. Since $F^{ij}H_{ij}=(|x|^2+\varepsilon^2)^{-1}(-2|x|^2+n\varepsilon^2)<0$ for $\varepsilon$ small enough, we have $H=x\cdot Du(x)-b_3>0$ in $\overline \Omega_R$ and we can get the desired estimate.

%
 	\end{proof}

\subsection{Second order estimates}
We will prove the second order estimate of the approximating equations.

\subsubsection{\textbf{The global second order estimate can be reduced to the boundary second order estimate} }

\begin{theorem}
Let $u$ be the $k$-convex solution of \eqref{case1EquaAppr} or \eqref{case2EquaAppr} or \eqref{case3EquaAppr} and consider
$
\tilde G=u_{\xi\xi}\varphi(P)h(u)
$,
then we have
\begin{align}\label{Secondorderestimate}
\max\limits_{B_R\setminus\Omega} \tilde G\le C+\max\limits_{\Gamma_R} \tilde G.
\end{align}
where $\Gamma_R:=\partial\left(B_R\setminus\Omega \right)$, $\varphi(t)$ and $h$ are defined by
\begin{align}
	\varphi(t)=\left\{
	\begin{aligned}
		(M-t)^{-\tau}, k<n,\\
		1,           \qquad k=n,
		\end{aligned}
	\right.
	\end{align}
where $M:=2\max P+1$ and $\tau$ is a uniform positive constant determined in \eqref{0727tau1} (if $k\neq\frac{n}{2}$) and \eqref{0727tau2} (if $k=\frac{n}{2}$). 
$h$ is defined by 
\begin{align}
	h(u)=
	\left\{ {\begin{array}{*{20}c}
			{e^{2u}, \quad  \quad\ k=\frac{n}{2}  }, \notag\\
			{ u^{\frac{n}{2k-n}}, \ \ \quad k>\frac{n}{2} }, \notag\\
			{(-u)^{-\frac{n}{n-2k}}, \ k<\frac{n}{2} }. \notag \\
	\end{array}} \right.
\end{align}
\end{theorem}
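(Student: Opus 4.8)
The plan is to adapt the standard ``maximum-principle on an auxiliary function'' argument, already used in this paper for the gradient estimate, to the second-order quantity $\tilde G = u_{\xi\xi}\,\varphi(P)\,h(u)$, following the idea of Chow--Wang \cite{cw2001cpam}. The point is that the decay-weighted test function $\varphi(P)h(u)$ is engineered so that the ``bad'' third-order terms produced by differentiating $S_k(D^2u)=f$ twice are absorbed, leaving a differential inequality $F^{ij}\tilde G_{ij}\le C\tilde G$ (or $\le 0$ after shifting by a constant) at an interior maximum, whence $\tilde G$ attains its max on $\Gamma_R = \partial(B_R\setminus\Omega)$ up to an additive constant. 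First I would fix a unit vector $\xi$ and an interior maximum point $x_0$ of $\tilde G$ over all such $\xi$, rotate coordinates so that $D^2u(x_0)$ is diagonal and $\xi=e_1$, and work with $\log\tilde G = \log u_{11} + \log\varphi(P) + \log h(u)$. Differentiating once gives the critical-point relations $\frac{u_{11i}}{u_{11}} + \frac{\varphi'}{\varphi}P_i + \frac{h'}{h}u_i = 0$, which I would use to eliminate $u_{11i}$ in the second-order expression.

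The core computation is to expand $0 \ge F^{ij}(\log\tilde G)_{ij}$ at $x_0$. Here $F^{ij} = \partial \log S_k / \partial u_{ij}$, so $F^{ij}u_{11ij} = -F^{ij,kl}u_{11i}u_{klj} + (\log f)_{11} \ge (\log f)_{11} - C$ by concavity of $\log S_k$ (Lemma \ref{concavity}); this is exactly where the third-derivative terms get a favorable sign, and the remaining uncontrolled third derivatives are the $u_{11i}$ appearing through $P_i$, which the critical-point relation trades for terms in $u_{11}^2$ times $F^{11}$. The weight $h(u)$ is chosen (case by case, matching the fundamental solution as in \eqref{threecase1}) so that the term $\frac{h'}{h}F^{ii}u_{ii}$ combines with the Hessian-trace term $\sum F^{ii} = \mathcal F$ and the contribution of $f = f^{j,\varepsilon}$ — which satisfies $|D\log f|\le C|x|^{-1}$ and $|D^2\log f|\le C|x|^{-2}$ on $\Omega^c$ — to produce a net nonpositive or boundedly-controlled quantity, using that $\varphi(P)$ stays in $[\,1, (M - \max P)^{-\tau}\,]$ and its logarithmic derivatives are controlled once $\tau$ is fixed as in \eqref{0727tau1}/\eqref{0727tau2}. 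The bound on $|Du|$ from below (the previous lemma) guarantees $u_{ij}$ is genuinely $k$-convex with $\lambda_1$ of the right sign, and the already-established gradient bound makes $P$, hence $\varphi(P)$, uniformly two-sided bounded, so $\tilde G$ and $u_{11}$ are comparable up to uniform constants.

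The main obstacle I expect is the bookkeeping in the second-order expansion: correctly grouping the terms involving $\frac{S_{k}(\lambda|1)}{S_{k-1}(\lambda|1)}$, $\lambda_1^2$, and $\mathcal F$ so that the choice of $\tau$ (large enough to dominate cross terms coming from $\varphi'/\varphi$, but compatible with $M = 2\max P + 1$) makes the coefficient of $F^{11}u_{11}^2$ strictly negative, mirroring the delicate algebra already carried out for the gradient estimate in \eqref{Final1}. Once $F^{ij}(\log\tilde G)_{ij}\le C$ at $x_0$ is obtained — with $C$ uniform in $\varepsilon,R$ because all the $f^{j,\varepsilon}$-dependent quantities are controlled by negative powers of $|x|\ge r_0$ — one concludes $u_{11}(x_0)\le C$ and therefore $\tilde G(x_0)\le C$, which forces $\max_{B_R\setminus\Omega}\tilde G\le C + \max_{\Gamma_R}\tilde G$. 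I would finish by noting that the one-sided bound $u_{\xi\xi}\le \text{(bound)}$ combined with $k$-convexity (so $\sum_{i}u_{ii} = \Delta u$ is controlled and each $u_{ii}\ge -\,\sum_{j\ne i}u_{jj}$) upgrades to the full $|D^2u|$ bound, reducing everything to the boundary estimate on $\Gamma_R$ to be treated in the next subsection.
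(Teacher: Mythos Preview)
Your outline captures the general maximum-principle framework, but there are two genuine gaps that would prevent the proof from closing.

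First, and most seriously, you do not mention the Chou--Wang dichotomy between the cases $\lambda_k\ge\delta\lambda_1$ and $\lambda_k<\delta\lambda_1$. This case split is the heart of the argument. The concavity of $\log S_k$ only yields
\[
-F^{ij,rs}u_{ij1}u_{rs1}\ \ge\ 2\lambda_1^{-1}f^{-1}\sum_{i\ge2}S_{k-2}(\lambda|1i)\,u_{11i}^2,
\]
and this is \emph{not} enough on its own to absorb the bad term $\sum_{i\ge2}F^{ii}\bigl(\tfrac{u_{11i}}{u_{11}}\bigr)^2$; it only works (via the inequality $2\lambda_1S_{k-2}(\lambda|1i)\ge(2-\epsilon_0)S_{k-1}(\lambda|i)$) when the intermediate eigenvalues are small relative to $\lambda_1$, i.e.\ in Case~2. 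In Case~1 one needs a completely different mechanism: the term $\tfrac{\varphi'}{\varphi}F^{ii}P_{ii}$ produces $c\,h(\log h)'\sum_iF^{ii}\lambda_i^2$, and the hypothesis $\lambda_k\ge\delta\lambda_1$ then gives $\sum_iF^{ii}\lambda_i^2\ge\tilde\theta f^{-1}S_{k-1}(\lambda)\lambda_1^2$, which is what drives the bound on $\lambda_1h$. Your proposal treats the concavity term as if it handled all the third-order contributions uniformly, which it does not.

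Second, you have the role and size of $\tau$ backwards. You write that $\tau$ should be ``large enough to dominate cross terms,'' but in fact $\tau$ must be \emph{small}: the conditions \eqref{0727tau1} and \eqref{0727tau2} are upper bounds on $\tau$. The point of $\varphi(P)=(M-P)^{-\tau}$ is that $\varphi'/\varphi=\tau(M-P)^{-1}>0$ gives a strictly positive coefficient in front of the good term $\sum_iF^{ii}\lambda_i^2$ (used in Case~1), while $\varphi''/\varphi=(1+\tau^{-1})(\varphi'/\varphi)^2$ ensures that in Case~2 the ``Claim'' \eqref{27132} holds: the $(\varphi'/\varphi)^2$-terms are small enough not to destroy the absorption of third derivatives. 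The relation you describe between $\tau$ and ``the coefficient of $F^{11}u_{11}^2$'' does not match the actual balance of terms. Similarly, the term $\tfrac{h'}{h}F^{ii}u_{ii}=k(\log h)'$ is just a harmless positive constant at each point; the decisive contribution of $h$ is through the relation $\tilde g=c_{n,k}h(\log h)'$ inside $P$, which makes $F^{ii}P_{ii}$ produce $h(\log h)'\sum F^{ii}\lambda_i^2$.
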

\begin{proof}
	For simplicity, we write $f$ instead of $f^{1,\varepsilon}$, $f^{2,\varepsilon}$ or $f^{3,\varepsilon}$ during the proof.\\
We rewrite the equation as
\begin{align*}
F(D^2u)=\log {S_k(D^2 u)}=\tilde f,
\end{align*}
where $\tilde f=\log f$ satisfying
\begin{align}\label{f0714}
	|D\tilde f|^2+|D^2\tilde f|\le C|x|^{-2}.
	\end{align}
Now we are ready to prove the second order estimate, we first recall
\begin{align}
P=|Du|^2 \tilde g(u):=\left\{ {\begin{array}{*{20}c}
   {|Du|^2e^{2u}, \quad \quad  \ k=\frac{n}{2}  }, \notag\\
   {|Du|^2 u^{\frac{2(n-k)}{2k-n}} , \ \quad\  k>\frac{n}{2} },\notag \\
   {|Du|^2 (-u)^{-\frac{2(n-k)}{n-2k}}, \ k<\frac{n}{2} }.  \notag\\
\end{array}} \right.
\end{align}

Direct manipulation shows that
\begin{align}
	(\log h)'=&
	\left\{ {\begin{array}{*{20}c}
			{2, \quad  \quad\ k=\frac{n}{2}  }, \notag\\
			{ \frac{n}{2k-n}u^{-1}, \ \ \quad k>\frac{n}{2} }, \notag\\
			{\frac{n}{n-2k}(-u)^{-1}, \ k<\frac{n}{2} }, \notag \\
	\end{array}} \right.\\
h((\log h)')^{-1}\le& C |x|^2\label{h0714},
	\end{align}
moreover,
\begin{align}\label{ghrelation}
	\tilde g=c_{n,k}h(\log h)',
	\end{align}
where $c_{n,k}=2$ when $k=\frac{n}{2}$ and $c_{n,k}=\frac{n}{|n-2k|}$ when $k\neq\frac{n}{2}$.

Assume $\tilde G$ attains its maximum at $x_0\in \Omega_R$ along the direction $\xi_0$. We choose the coordinate at $x_0\in \Omega_R$ such that $D^2 u(x_0)=\{\lambda_i\delta_{ij}\}$. Then one can check $\xi_0=(1,0\cdots,0)$.
Then
$
G=\log{u_{11}}+\log{\varphi(P)}+\log{h(u)}$
attains its maximum at $x_0$.\\
\emph{ Our goal is to prove the uniform upper  bound of $\lambda_1 h$.}

 All the calculations are at $x_0$, fisrtly, we have
\begin{align*}
0=G_i=&\frac{u_{11i}}{u_{11}}+\frac{\varphi_i}{\varphi}+\frac{h_i}{h}\\
=&\frac{u_{11i}}{u_{11}}+\frac{\varphi'}{\varphi}P_i+\frac{h'}{h}u_i.
\end{align*}
From the above, we have
\begin{align}
\frac{u_{11i}}{u_{11}}=&-\frac{\varphi'}{\varphi}P_i-\frac{h'}{h}u_i,\label{1}\\
\frac{h'}{h}u_i=&-\frac{u_{11i}}{u_{11}}-\frac{\varphi'}{\varphi}P_i.\label{2}
\end{align}

Differentiating $G$ twice, we have
\begin{align}
G_{ii}=\frac{u_{11ii}}{u_{11}}-\left(\frac{u_{11i}}{u_{11}}\right)^2
+\frac{\varphi'}{\varphi}P_{ii}+\left(\frac{\varphi''}{\varphi}-\left(\frac{\varphi'}{\varphi}  \right)^2\right)|P_i|^2
+\frac{h''}{h}u_{ii}+\left(\frac{h''}{h}-\left(\frac{h'}{h}  \right)^2\right)|u_i|^2.
\end{align}
Then we have
\begin{align}
0\ge F^{ii}G_{ii}=&\frac{F^{ii}u_{11ii}}{u_{11}}-
{F^{ii}}\left(\frac{u_{11i}}{u_{11}}\right)^2
+\frac{\varphi'}{\varphi}F^{ii}P_{ii}
+\left(\frac{\varphi''}{\varphi}-\left(\frac{\varphi'}{\varphi}  \right)^2\right)F^{ii}|P_i|^2
+\frac{h''}{h}F^{ii}u_{ii}+\left(\frac{h''}{h}-\left(\frac{h'}{h}  \right)^2\right)F^{ii}|u_i|^2\notag\\
=&\frac{(\tilde f)_{11}-F^{ij,rs}u_{ij1}u_{rs1}}{u_{11}}-
\sum_{i=2}^n{F^{ii}}\left(\frac{u_{11i}}{u_{11}}\right)^2
-F^{11}\left(\frac{\varphi'}{\varphi}P_1+\frac{h'}{h}u_1\right)^2
+\left(\frac{\varphi''}{\varphi}-\left(\frac{\varphi'}{\varphi}  \right)^2\right)F^{ii}|P_i|^2\notag\\
&+\frac{\varphi'}{\varphi}F^{ii}P_{ii}+\left(\frac{h''}{h}-\left(\frac{h'}{h}  \right)^2\right)F^{ii}|u_i|^2+\frac{h'}{h}F^{ii}u_{ii}\notag\\
\ge& 2\lambda_1^{-1}f^{-1}\sum_{i=2}^n S_{k-2}(\lambda|1i)|u_{11i}|^2-
\sum_{i=2}^n{F^{ii}}\left(\frac{u_{11i}}{u_{11}}\right)^2
\notag\\
&+\left(\frac{\varphi''}{\varphi}-\left(\frac{\varphi'}{\varphi}  \right)^2\right)\sum_{i=2}^nF^{ii}|P_i|^2+\left(\frac{h''}{h}-\left(\frac{h'}{h}  \right)^2\right)\sum\limits_{i=2}^n F^{ii}|u_i|^2\notag\\
&+\left(\frac{\varphi''}{\varphi}-3\left(\frac{\varphi'}{\varphi}  \right)^2\right)F^{11}|P_1|^2+\left(\frac{h''}{h}-3\left(\frac{h'}{h}  \right)^2\right)F^{11}|u_1|^2\notag\\
&+\frac{\varphi'}{\varphi}F^{ii}P_{ii}+\frac{kh'}{h}-\lambda_1^{-1}(\tilde f)_{11},\label{main}
\end{align}
where we use the concavity property of  $\log S_k$.

We fist deal with the easy case: $k=n$. Note that in this case $\varphi=1$, $h=u$ and $F^{ii}=\lambda_i^{-1}$. From
\eqref{main}, we have
\begin{align}
	0\ge F^{ii}G_{ii}\ge {n}{u^{-1}}-2\lambda_1^{-1}u^{-2}{u_1^2}-\lambda_1^{-1}(\tilde f)_{11}.
	\end{align}
Multiply $\lambda_1u^2$ in the above inequality,  we have
\begin{align}
	\lambda_1 u\le 2u_1^2+u^2(\tilde f)_{11}\le C,
\end{align}
where we use $|Du|\le C$, $u\le C|x|$  and $|D^2\tilde f|\le C|x|^{-2}$ (see \eqref{f0714}). Thus we finish the proof when $k=n$.

\textbf{In the remaining proof, we always assume $k<n$}.

We manipulate $P_{ii}$ directly.
\begin{align*}
P_i=&2\tilde{g}\sum_{l=1}^n u_l u_{li}+|Du|^2\tilde g'u_i,\\
P_{ii}=&2\tilde g\sum_{l=1}^nu_l u_{lii}+2\tilde gu_{ii}^2+4\tilde g'u_{ii}u_i^2+|Du|^2\tilde g''u_i^2+|Du|^2\tilde g'u_{ii}
\end{align*}
Thus we have
\begin{align}
F^{ii}P_{ii}=&2\tilde g\sum_{i=1}^nF^{ii}u_{ii}^2+4g'\sum_{i=1}^nF^{ii}u_{ii}u_i^2+|Du|^2\tilde {g}''\sum_{i=1}^nF^{ii}u_i^2+k\tilde g'|Du|^2+2\tilde g\sum_{i=1}^nu_i\tilde{f}_i\notag\\
\ge& \tilde g\sum_{i=1}^nF^{ii}u_{ii}^2-4\tilde g^{-1}(\tilde g')^2\sum_{i=1}^nF^{ii}u_{i}^4+|Du|^2|\tilde g''|\sum_{i=1}^nF^{ii}u_i^2-2\tilde g|Du||D\tilde f|\notag\\
\ge& \tilde g\sum_{i=1}^nF^{ii}u_{ii}^2-C((\log h)')^2|Du|^2\tilde g\sum_{i=1}^nF^{ii}u_i^2-2c_{n,k}h(\log h)' |Du||D\tilde f|\notag\\
\ge& c_{n,k}h(\log h)'\sum_{i=1}^nF^{ii}u_{ii}^2-C((\log h)')^2P\sum_{i=1}^nF^{ii}u_i^2-CP^{\frac{1}{2}} (\log h)'\label{FP0},
\end{align}
where we have used $P=|Du|^2\tilde g$, $\tilde g=c_{n,k}h(\log h)'$ and $\tilde g^{\frac{1}{2}}|D\tilde f|\le C(\log h)'$
 which follows from $|D\tilde f|\le C|x|^{-1}$ (see \eqref{f0714}) and $h((\log h)')^{-1}\le C|x|^2$(see \eqref{h0714}).

Similar as  Chou-Wang [CPAM, 2001], we now divide two cases to obtain the upper bound of $\lambda_1 h(u)$.

\emph{\textbf{Case1: $\lambda_k\ge \delta\lambda_1$}},\\
Since $\lambda_k\ge \delta\lambda_1$, there exists a constant $\theta$ such that $S_{k-1}(\lambda|k)\ge\theta S_{k-1}(\lambda) $, we have
\begin{align*}
\sum_{i=1}^n F^{ii}u_{ii}^2\ge& F^{kk}u_{kk}^2\ge \theta f^{-1}S_{k-1}(\lambda)u_{kk}^2\\
\ge&\delta^2 \theta f^{-1}S_{k-1}(\lambda)\lambda_1^2=\tilde \theta f^{-1}S_{k-1}(\lambda)\lambda_1^2.
\end{align*}
Then by \eqref{FP0}, we have
\begin{align}
 F^{ii}P_{ii}\ge& c_{n,k}h(\log h)'\tilde \theta f^{-1}S_{k-1}(\lambda)\lambda_1^2-C((\log h)')^2PS_{k-1}(\lambda)|Du|^2-CP^{\frac{1}{2}} (\log h)'\notag\\
=&  \frac{1}{2}c_{n,k}h(\log h)'\tilde \theta f^{-1}S_{k-1}(\lambda)\lambda_1^2-C((\log h)')^2PS_{k-1}(\lambda)|Du|^2\notag\\
&+\frac{1}{2}c_{n,k}h(\log h)'\tilde \theta f^{-1}S_{k-1}(\lambda)\lambda_1^2-CP^{\frac{1}{2}} (\log h)'\notag\\
\ge& h^{-1}(\log h)'f^{-1}S_{k-1}(\lambda)(\frac{1}{2}c_{n,k}\tilde\theta(h\lambda_1)^2-CP^2)\notag\\
&+(\log h)'(\frac{1}{2}c_{n,k}\tilde\theta f^{-1}S_{k-1}\lambda_1(h\lambda_1)-CP^{\frac{1}{2}})\notag\\
\ge&(\log h)'(\frac{1}{2}c_{n,k}\tilde\theta f^{-1}S_{k-1}\lambda_1(h\lambda_1)-CP^{\frac{1}{2}})\notag\\
\ge& (\log h)'\frac{1}{4n} c_{n,k}\tilde\theta f^{-1}S_{k-1}\lambda_1(h\lambda_1)\label{FP}\\
=&:c_0(\log h)' f^{-1}S_{k-1}\lambda_1(h\lambda_1),\notag
\end{align}
where we use the following Maclaurin inequality in \eqref{FP}
\begin{align*}
	\frac{S_k(\lambda)}{S_{k-1}(\lambda)}\le \frac{n-k+1}{nk}S_1(\lambda),
	\end{align*}
and we have assumed
 \begin{align}\label{bound1}
 (\lambda_1h)^2>&4 c_{n,k}^{-1} \tilde\theta^{-1}CP^2=:C_1,\\
\lambda_1 h\ge& 8c_{n,k}^{-1} \tilde\theta^{-1}CP^{\frac{1}{2}}:=C_2^{\frac{1}{2}}\label{bound2}.
\end{align}
In this case, inserting \eqref{1}  into \eqref{main}, we have
 \begin{align}
 0\ge F^{ii}G_{ii}\ge&
 \Big(\frac{\varphi''}{\varphi}-3\Big(\frac{\varphi'}{\varphi}  \Big)^2\Big)\sum_{i=1}^nF^{ii}|P_i|^2+\Big(\frac{h''}{h}
 -3\Big(\frac{h'}{h}  \Big)^2\Big)\sum\limits_{i=1}^n F^{ii}|u_i|^2\notag\\
&+\frac{\varphi'}{\varphi}F^{ii}P_{ii}+\frac{kh'}{h}-\lambda_1^{-1}(\tilde f)_{11}
\notag\\
\ge& \Big(\frac{h''}{h}
-3\Big(\frac{h'}{h}  \Big)^2\Big)\sum\limits_{i=1}^n F^{ii}|u_i|^2+\frac{\varphi'}{\varphi}F^{ii}P_{ii}+\frac{kh'}{h}-\lambda_1^{-1}(\tilde f)_{11}\label{main1}.
\end{align}
Combining \eqref{main1} with \eqref{FP} and note that
$\left|\frac{h''}{h}
	-3(\frac{h'}{h}  )^2\right|\le C((\log h)')^2$, we have
\begin{align}
	0\ge&F^{ii}G_{ii} \ge
	\left(\frac{h''}{h}
	-3\left(\frac{h'}{h}  \right)^2\right)\sum\limits_{i=1}^n F^{ii}|u_i|^2+c_0\frac{\varphi'}{\varphi}(\log h)'f^{-1}S_{k-1}\lambda_1(h\lambda_1)+k(\log h)'-\lambda_1^{-1}(\tilde f)_{11}\notag\\
	\ge& -Cf^{-1}S_{k-1}(\lambda) |Du|^2((\log h)')^2+c_0\frac{\tau}{M-P}(\log h)'f^{-1}S_{k-1}\lambda_1(\lambda_1 h)-\lambda_1^{-1}\tilde {f}_{11}\notag\\
	=&h^{-1}(\log h)'f^{-1}S_{k-1}(\lambda)\left(\frac{c_0\tau}{M-P}(\lambda_1 h)^2-CP\right)-\lambda_1^{-1}\tilde {f}_{11}\notag\\
	\ge& \frac{1}{2}h^{-1}(\log h)'f^{-1}S_{k-1}(\lambda)c_0\frac{\tau}{M-P}(\lambda_1 h)^2-\lambda_1^{-1}\tilde {f}_{11},\label{7131}
	\end{align}
where we have assumed
\begin{align}\label{bound3}
	(\lambda_1 h)^2\ge 2c_0^{-1}\tau^{-1}(M-P)CP=:C_3.
	\end{align}

Since $\frac{S_k(\lambda)}{S_{k-1}(\lambda)}\le \frac{n-k+1}{nk}S_1(\lambda)$, by \eqref{7131}, we have
\begin{align}\label{bound4}
	(\lambda_1 h)^2\le Ch((\log h)')^{-1}{\tilde f}_{11}.
	\end{align}

In conclusion, by \eqref{bound1}, \eqref{bound2}, \eqref{bound3} and \eqref{bound4}, we obtain an upper bound of $\lambda_1u^{b}$ as follows
\begin{align}\label{Bound1}
(\lambda_1u^{b})^2\le& C_1+C_2+C_3+Ch((\log h)')^{-1}{\tilde f}_{11}\le \tilde C,
\end{align}
where we use $|D^2 \tilde f|\le C|x|^{-2}$ (see \eqref{f0714}) and $ h((\log h)')^{-1}\le C|x|^2$ (see \eqref{h0714}).

\emph{\textbf{Case2:} $\lambda_k\le \delta \lambda_1$},\\
Since $\lambda_k+\lambda_{k+1}+\cdots\lambda_n=S_1(\lambda_1|12\ldots,k-1)>0$,
we have $-\lambda_n\le (n-k)\lambda_k<n\delta \lambda_1$, thus $|\lambda_i|<n\delta\lambda_1,i=k+1,\ldots,n$.\\
Inserting  \eqref{FP0} into \eqref{main}, we obtain
\begin{align}
	0\ge& F^{ii}G_{ii}\ge 2\lambda_1^{-1}f^{-1}\sum_{i=2}^n S_{k-2}(\lambda|1i)|u_{11i}|^2-
	\sum_{i=2}^n{F^{ii}}\left(\frac{u_{11i}}{u_{11}}\right)^2
	\notag\\
	&+\left(\frac{\varphi''}{\varphi}-\left(\frac{\varphi'}{\varphi}  \right)^2\right)\sum_{i=2}^nF^{ii}|P_i|^2+\left(\frac{h''}{h}-\left(\frac{h'}{h}  \right)^2\right)\sum\limits_{i=2}^n F^{ii}|u_i|^2\notag\\
	&+\left(\frac{\varphi''}{\varphi}-3\left(\frac{\varphi'}{\varphi}  \right)^2\right)F^{11}|P_1|^2+\left(\frac{h''}{h}-3\left(\frac{h'}{h}  \right)^2\right)F^{11}|u_1|^2\notag\\
	&+\frac{\varphi'}{\varphi}c_{n,k}h(\log h)'\sum_{i=1}^nF^{ii}u_{ii}^2-C\frac{\varphi'}{\varphi}((\log h)')^2P\sum_{i=1}^nF^{ii}u_i^2-C\frac{\varphi'}{\varphi}P^{\frac{1}{2}} (\log h)'-\lambda_1^{-1}(\tilde f)_{11}\label{27131}.
\end{align}

\textbf{\emph{Claim}}: \emph{We claim that for sufficiently small $\tau>0$, there exists sufficiently small $\delta>0$ such that  the following holds}
\begin{align}
	&(*):=2\lambda_1^{-1}f^{-1}\sum_{i=2}^n S_{k-2}(\lambda|1i)|u_{11i}|^2-
	\sum_{i=2}^n{F^{ii}}\left(\frac{u_{11i}}{u_{11}}\right)^2
	+\left(\frac{\varphi''}{\varphi}-\left(\frac{\varphi'}{\varphi}  \right)^2\right)\sum_{i=2}^nF^{ii}|P_i|^2\notag\\
	&+(\log h)''\sum\limits_{i=2}^n F^{ii}|u_i|^2-C\frac{\varphi'}{\varphi}((\log h)')^2P\sum_{i=2}^nF^{ii}u_i^2\label{27132}\\
	\ge& 0\notag.
	\end{align}
We will prove the estimate of $\lambda_1 h$ based on the claim above. Indeed, by \eqref{27132} and \eqref{27131}, we have
\begin{align}
	0\ge& F^{ii}P_{ii}\ge (\log\varphi)'c_{n,k}h(\log h)'\sum\limits_{i=1}^n F^{ii}u_{ii}^2\notag\\
	&-C(\log\varphi)'((\log h)')^2F^{11}u_1^2-C(\log\varphi)'(\log h)'-\lambda_1^{-1}\tilde f_{11}\notag\\
	\ge&
	h^{-1}(\log\varphi)'(\log h)'F^{11}\left(\frac{c_{n,k}}{2}(\lambda_1 h)^2-Ch(\log h)'u_1^2\right)\notag\\
	&+(\log\varphi)'\frac{c_{n,k}}{2}h(\log h)'\sum\limits_{i=1}^n F^{ii}u_{ii}^2-C(\log\varphi)'(\log h)'-\lambda_1^{-1}\tilde f_{11}\notag\\
	\ge& h^{-1}(\log\varphi)'(\log h)'F^{11}\left(\frac{c_{n,k}}{2}(\lambda_1 h)^2-CP\right)\notag\\
	&+c_0(\log\varphi)'(\log h)'(\lambda_1h)-C(\log\varphi)'(\log h)'-\lambda_1^{-1}\tilde f_{11}\notag\\
	\ge& \frac{1}{2}c_0(\log\varphi)'(\log h)'(\lambda_1h)-\lambda_1^{-1}\tilde f_{11},\label{27133}
\end{align}
where we use the Maclaurin inequality and assume $\lambda_1 h$ is sufficiently large. From \eqref{27133}, we obtain
\begin{align}
( \lambda_1 h)^2\le Ch((\log h)')^{-1}{\tilde f}_{11}\le \tilde C,\label{Bound2}	
	\end{align}
where we use $|D^2 \tilde f|\le C|x|^{-2}$ (see \eqref{f0714}) and $| h((\log h)')^{-1}\le x|^{2}$ (see \eqref{h0714}).
\\

Now we prove the Claim \eqref{27132}.\\
\emph{Proof of the Claim \eqref{27132}:}
By  Page1037 (3.5) in Chou-Wang\cite{cw2001cpam}, for any sufficiently small $\epsilon_0>0$, there exists $\delta>0$ such that
\begin{align}\label{27134}
	2\lambda_1S_{k-2}(\lambda|1i)-(2-\epsilon_0)S_{k-1}(\lambda|i)>0.
	\end{align}
Now we use \eqref{27134}  to prove the claim by choosing appropriate $\epsilon_0$ and $\delta$. Firstly,  by \eqref{2}, we have
\begin{align}
	\sum\limits_{i=2}^nF^{ii}|(\log h)_i|^2=&-\sum\limits_{i=2}^n|\frac{u_{11i}}{u_{11}}+(\log\varphi)_i|^2\notag\\
	\ge& -2\sum\limits_{i=2}^n|\frac{u_{11i}}{u_{11}}|^2-2\sum\limits_{i=2}^n|(\log\varphi)_i|^2.\label{27135}
\end{align}
\emph{Subcase 1: $k<\frac{n}{2}$ or $\frac{n}{2}<k<n$.}

 We deal with the term $(\log h)'' \sum\limits_{i=2}^nF^{ii}u_i^2$ as follows
\begin{align}\label{27136}
	(\log h)''\sum\limits_{i=2}^n F^{ii}u_i^2=(\log h)''((\log h)')^{-2}\sum\limits_{i=2}^n|\frac{u_{11i}}{u_{11}}+(\log\varphi)_i|^2
	=-\frac{|n-2k|}{n}\sum\limits_{i=2}^n|\frac{u_{11i}}{u_{11}}+(\log\varphi)_i|^2\notag	\\
\ge -(1+\tau_1)\frac{|n-2k|}{n}\sum\limits_{i=2}^n|\frac{u_{11i}}{u_{11}}|^2-(1+\tau_1^{-1})\frac{|n-2k|}{n}\sum\limits_{i=2}^n|(\log\varphi)_i|^2,
\end{align}
where $\tau_1>0$ is a sufficiently small constant.\\
Inserting \eqref{27135} and \eqref{27136} into \eqref{27132}, we have
\begin{align}
(*)\ge& \left(1-\epsilon_0-(1+\tau_1)\frac{|n-2k|}{n}-2\tau CP(M-P)^{-1}\right)|\frac{u_{11i}}{u_{11}}|^2\notag\\
&+
\left(\frac{\varphi''}{\varphi}-\left(1+(1+\tau_1^{-1})\frac{|n-2k|}{n}+2\tau CP(M-P)^{-1}\right)((\log \varphi)')^2\right)F^{ii}|P_i|^2\notag\\
\ge& \left(1-\epsilon_0-(1+\tau_1)\frac{|n-2k|}{n}-2C\tau \right)|\frac{u_{11i}}{u_{11}}|^2\notag\\
&+
\left(\frac{\varphi''}{\varphi}-\left(1+(1+\tau_1^{-1})\frac{|n-2k|}{n}+2C\tau \right)((\log \varphi)')^2\right)F^{ii}|P_i|^2\label{27137}.
\end{align}
Since   $|n-2k|<n$,  we  can choose $\tau_1$ such that $b=(1+\tau_1)\frac{|n-2k|}{n}<1$ and then we choose $\epsilon_0=\frac{1-b}{2}$ and $M>\frac{2C_1}{1-b}$ such that the first term of \eqref{27137} is nonnegative. At last, since $\frac{\varphi''}{\varphi}=(1+\tau^{-1})((\log\varphi)')^2$,  the second term of \eqref{27137} is nonnegative if we choose positive small constant $\tau$ as follows
\begin{align}\label{0727tau1}
	\tau<\min\Big\{\frac{1-b}{4C}, \frac{n}{(1+\tau_1^{-1})|n-2k|+2nC}\Big\}.
	\end{align}

\emph{Subcase 2: $k=\frac{n}{2}$.}\\
In this case $(\log h)''=0$,
inserting \eqref{27135} into \eqref{27132} and using \eqref{27134} with $\epsilon_0=\frac{1}{2}$, we have
\begin{align*}
	(*)\ge& \left(\frac{1}{2}-{2\tau CP}(M-P)^{-1}\right)|\frac{u_{11i}}{u_{11}}|^2+\left(\frac{\varphi''}{\varphi}-(1+2\tau CP(M-P)^{-1})((\log \varphi)')^2\right)F^{ii}|P_i|^2\\
	\ge& 0,
	\end{align*}
where we choose the positive small constant $\tau$ satisfying
\begin{align}\label{0727tau2}
	\tau<\frac{1}{4C+4}.
	\end{align}
Then we finish the proof of the Claim.
Combining \eqref{Bound1} with \eqref{Bound2}, we get the estimate of $\lambda_1 h$.

\end{proof}
\subsubsection{\textbf{Second order estimate on the boundary $\partial \Omega_R$}}.

\textbf{Step1: tangential derivative estimates.}\\

We first prove the tangential derivative estimate on $\partial\Omega$.
For any  $x_0\in\partial\Omega$, we choose the coordinate such that $x_0=0$, $\partial\Omega\bigcap B_\delta (0)=(x', \rho(x'))$, $\rho(0)=0$ and $\nabla \rho(0)=0$.
Since $u(x', \rho(x'))$ is constant,
we have
\begin{align*}
	|u_{\alpha\beta}(0)|=&|u_{n}\rho_{\alpha\beta}(0)|
	\le C|Du|(0)\le C.
	\end{align*}
Next we can prove $S_{k-1}(u_{\alpha\beta}(0))\ge c_1>0$ as that in Section 3 (see \eqref{0717c2})

For any $x_0\in \p B_R$, we choose the coordinate such that $x_0=(0,\cdots,
 0, -R)$, then near $x_0$, $\p B_R$ is locally represented by $x_n=-(R^2-|x'|^2)^{\frac{1}{2}}$.\\
 Since $u|_{\partial B_R}=constant$, we have
 \begin{align}\label{4627141}
 u_{\alpha\beta}(x_0)=&-u_n(x_0)\frac{\p ^2 x_n}{\p x_{\alpha}\p x_{\beta}}(x_0)=-R^{-1}u_n(x_0)\delta_{\alpha\beta}\notag\\
 =&R^{-1} u_{\nu}(x_0)\delta_{\alpha\beta}.
 \end{align}

 Since we have the boundary gradient estimate on $\partial B_R$ (see \eqref{457141}, \eqref{0714gradient2.2} and \eqref{0714gradient3.2}),
 \begin{align*}
CR^{-\frac{n-k}{k}}\ge u_{\nu}(x)\ge cR^{-\frac{n-k}{k}},
\end{align*}
then by \eqref{4627141}, we have
\begin{align}
|u_{\alpha\beta}(x_0)|\le& CR^{-\frac{n}{k}}\\
\{u_{\alpha\beta}(x_0)\}\ge& c R^{-\frac{n}{k}} \{\delta_{\alpha\beta}\}.
\end{align}

\emph{\textbf{Step2: { tangential-normal derivative estimates $\partial \Omega_R$}}}

 For any $x_0\in \partial B_R$,
choose the coordinate such that $x_0=(0,\cdots,0,-R)$, $\partial B_R\cap B_{\frac{1}{2} R}(x_0)$ is represented by
\begin{align*}
	x_n=\rho(x')=-(R^2-|x'|^2)^{\frac{1}{2}},
\end{align*}
Consider the tangential operator $T_{\alpha}=\left(x_\alpha \partial_{n}-x_n \partial_{\alpha}\right)$,$1\le \alpha\le n-1$. Since $u(x',\rho(x'))$ is constant, we have
\begin{align*}
	0=&	u_{\alpha}+u_{n}\rho_{\alpha}
	=u_{\alpha}-x_{\alpha}\rho^{-1}u_n
\end{align*}
Then {on} $\partial B_R\cap B_{\frac R 2}(x_0)$, we have
\begin{align*}
	T_{\alpha} u=x_\alpha u_{n}-\rho u_{\alpha}=0.
\end{align*}
We consider the function
\begin{align*}
	w=A_0(1+\frac{x_n}{R})\pm R^{\frac{n-2k}{k}}T_{\alpha } u \ \text{in}\ B_R\cap B_{\frac{R}{2}}(x_0),
\end{align*}

Obviously, $w(x_0)=0$. Since $T_\alpha u=0$ on ${\partial B_{R}\setminus B_{\frac R2}(x_0)}$, we have $w|_{\partial B_{R}\setminus B_{\frac R2}(x_0)}\ge 0$. \\
Since $R^{\frac{n-2k}{k}}|T_{\alpha} u|\le C_1R^{\frac{n-2k}{k}}|x||Du|\le C$,  choosing $A_0>8C$, we have \begin{align}
	w\ge \frac18 A_0-C>0 \quad \text{on}\quad {B_R\cap \partial B_{\frac{1}{2}R}(x_0)}.
\end{align}
Next we show $F^{ij}w_{ij}=0$. Indeed, firstly, recall $ D \tilde f=D\log f=-(\frac{n}{2}+1)D\log(|x|^2+\varepsilon)$, then we have
\begin{align}
	F^{ij}(T_{\alpha} u)_{ij}=x_{\alpha}\tilde f_{n}-x_{n}\tilde f_{\alpha}=0.
\end{align}
Thus we have
\begin{align*}
	F^{ij}w_{ij}=0.
\end{align*}
By maximum principle, $\min\limits_{\overline{B_R\cap B_{\frac R2}(x_0)}}w=w(x_0)=0$. Then we have
\begin{align*}
	0\le w_n(x_0)=R^{-1}A_0\pm R^{\frac{n-k}{k}} u_{\alpha n}(x_0).
\end{align*}
Then
$|u_{\alpha n}(x_0)|\le A_0  R^{-\frac nk}$ and thus we have the uniform tangential-normal derivative estimates on $\partial B_R$.
	
For any $x_0$, since $S_k(D^2\underline{u}^{i,\varepsilon})\ge \epsilon_0>0$ near $\partial \Omega$, we can prove the tangential-normal derivative estimates on $\partial \Omega$ similar as that in Section 3.

\emph{\textbf{Step3: double normal derivative estimates $\partial \Omega_R$}}\\
We can choose the coordinate at $x_0$ such that $u_{n}(x_0)=|Du|$ and $\{u_{\alpha\beta}(x_0)\}_{1\le \alpha,\beta\le n-1}$ is diagonal.

 If $x_0\in \partial B_R$, we have
\begin{align*}
	u_{nn}c_0R^{-\frac{n(k-1)}{k}}\le u_{nn}(x_0)S_{k-1}(u_{\alpha\beta}(x_0))=&S_k(D^2 u(x_0))-S_{k}(u_{\alpha\beta}(x_0))
	+\sum_{i=1}^{n-1}u_{in}^2S_{k-2}(u_{\alpha\beta})\\
	\le& f+C_{n-2}^{k-2}M_{21}^{k-2}M_{22}^2R^{-n}\\
	\le& C R^{-n}.
\end{align*}
This gives $u_{nn}\le C R^{-\frac{n}{k}}$. On the other hand,  $u_{nn}\ge -\sum\limits_{i=1}^{n-1}u_{ii}\ge -(n-1)M_{21}R^{-\frac{n}{k}}$.
Then we have $|u_{nn}(x_0)|\le CR^{-\frac{n}{k}}$.


If $x_0\in \partial \Omega$, since $S_{k-1}(u_{\alpha\beta}(x_0))\ge c_1$  which can be proved similar as that in Section 3, then we have $|u_{nn}(x_0)|\le C$.

In conclusion, we obtain $|D^2 u(x)|\le C|x|^{-\frac{n}{k}}$ on the boundary $\partial\Omega_R $ and thus $|D^2u|(x)\le C|x|^{-\frac nk}$ for any $x\in \overline\Omega_R$.
\section{Proof of Theorem \ref{main07201}, Theorem \ref{main07202} and Theorem \ref{main07203}}
\subsection{Uniqueness}
The uniqueness follows from the comparison principle for $k$-convex solutions of the $k$-Hessian equation in bounded domains in Lemma \ref{comparison0718} by Wang-Trudinger\cite{trudingerwang1997tmna} (see also \cite{trudinger1997cpde, urbas1990indiana}).\\

\textbf{Case1: $k<\frac{n}{2}$}\\
Let $u_1,u_2$ be solutions of the $k$-Hessian equation. For any $x_0\in \Omega^c$, we want to prove $u_1(x_0)\ge u_2(x_0)$. Indeed, since $\lim_{|x|\rightarrow \infty}u_i=0$, for any $\epsilon>0$, there esists sufficiently large  $R$  such that $x_0\in B_R(0)$ and  $u_1\ge u_2-\epsilon$ on $\partial B_R(0)$. Note we also have $u_1=u_2=0$ on $\partial \Omega$, by comparison theorem in $\Omega_R$, we then have $u_1\ge u_2-\epsilon$ in $\Omega_R$. Let $\epsilon$ go to $0$, we have $u_1(x_0)\ge u_2(x_0)$ and thus $u_1\ge u_2$ in $\Omega^c$. Similarly, we can prove  $u_2\ge u_1$ in $\Omega^c$. Then we have $u_1=u_2$ in $\Omega^c$ and thus prove the uniqueness part.\\

\textbf{Case 2: $k>\frac{n}{2}$}\\
For any $x_0\in \Omega^c$, we want to prove $u_1(x_0)\ge  u_2(x_0)$. Indeed, for any $t\in(0, 1)$, since $u_1-tu_2=(1-t)|x|^{\frac{2k-n}{k}}+O(1)$ when $|x|\rightarrow \infty$, there exists sufficiently large $R$ such that

\begin{align}
	x_0\in B_R(0) \quad \text{and}\quad   u_1>tu_2 \quad\text{on}\quad \partial B_R(0)
\end{align}

Note that we also have $u_1=1>tu_2$ on $\partial\Omega$. By comparison theorem, we then have  $u_1\ge tu_2$ in $\Omega_R$. In particular $u_1(x_0)\ge tu_2(x_0)$. Let $t$ tend to $1$, we have  $u_1(x_0)\ge u_2(x_0)$ and thus  $u_1\ge u_2$ in $\Omega^c$.

 Similarly, we have $u_2\ge u_1$. Then we have $u_1=u_2$ in $\Omega^c$ and thus prove the uniqueness part.\\

\textbf{Case 3: $k=\frac{n}{2}$}\\
Let $x_0\in \Omega^c$. For any $t\in(0, 1)$, since $u_1-tu_2=(1-t)\log|x|+O(1)$ when $|x|\rightarrow \infty$, there exists sufficiently large $R$ such that

\begin{align}
	x_0\in B_R(0) \quad \text{and}\quad   u_1> tu_2 \quad\text{on}\quad \partial B_R(0)
\end{align}

Since  $u_1= tu_2=0$ on $\partial\Omega$, by comparison theorem, we then have  $u_1\ge tu_2$ in $B_R(0)$. In particular $u_1(x_0)\ge tu_2(x_0)$. Let $t$ tend to $1$, we have  $u_1(x_0)\ge u_2(x_0)$ and thus  $u_1\ge u_2$ in $\Omega^c$.

 Similarly, we have $u_2\ge u_1$. Then we have $u_1=u_2$ in $\Omega^c$ and thus prove the uniqueness part.

\subsection{Existence and $C^{1,1}$-estimates}
The existence follows from the uniform $C^2$-estimates for $u^{\varepsilon, R}$.

\textbf{Case 1: $k<\frac{n}{2}$}\\
For any fixed sufficiently small $\varepsilon>0$,
 by Guan \cite{guan1994cpde}, $|u^{\varepsilon, R}|_{C^m(\Omega_{K_0})}\le C(\epsilon, K_0,m)$ for any $K_0>R_0$  and  $m\ge 0$ (we always assume $\Omega\subset\subset B_{\frac{R_0}{2}}$). Then there exists a subsequence $u^{\varepsilon, R_i}$ converging smoothly to a strictly $k$-convex $u^\varepsilon$ in $K$ and $u^{\infty}\in C^{\infty}(\Omega^c)$ satisfies
 \begin{align}
 	\left\{\begin{aligned}
 		S_k(D^2 u^\varepsilon)=f^{1,\varepsilon}\qquad\text{in}\ \ \Omega^c,\\
 		u^\varepsilon=-1, \qquad\text{on}\ \ \p\Omega.
 		\end{aligned}
 	\right.
 	\end{align}

Moreover, by Theorem \ref{apu10720}, we get
 \begin{align*}
 	\left\{
 	\begin{aligned}
 		C^{-1}|x|^{-\frac{n-2k}{k}}\le& -u^{\varepsilon}(x)\le C|x|^{-\frac{n-2k}{k}},\\
 		C^{-1}|x|^{-\frac{n-k}{k}}\le|Du^{\varepsilon}|(x)\le& C|x|^{-\frac{n-k}{k}},\\
 		|D^2u^{\varepsilon}|(x)\le& C|x|^{-\frac{n}{k}},
 	\end{aligned}
 	\right.
 \end{align*}
 Thus there exits a subsequence $u^{\epsilon_i}$ converges to $u$ in $C^{1,\alpha}_{loc}$ such that $u\in C^{1,1}(\Omega^c)$ is the $k$-convex solution of the k-Hessian equation \eqref{case1Equa1.1} and satisfies the estimates \eqref{decay10720}.

 \textbf{Case 2: $k>\frac{n}{2}$}\\
 For any fixed sufficiently small $\varepsilon>0$,
 by Guan \cite{guan1994cpde}, $|u^{\varepsilon, R}|_{C^m(\Omega_{K_0})}\le C(\epsilon, K_0,m)$ for any $K_0>R_0$  and  $m\ge 0$ . Then there exists a subsequence $u^{\varepsilon, R_i}$ converging smoothly to a strictly $k$-convex $u^\varepsilon$ in $K$ and $u^{\varepsilon}\in C^{\infty}(\Omega^c)$ satisfies
 \begin{align}
 	\left\{\begin{aligned}
 		S_k(D^2 u^\varepsilon)=f^{2,\varepsilon}\qquad\text{in}\ \ \Omega^c,\\
 		u^\varepsilon=1, \qquad\text{on}\ \ \p\Omega.
 	\end{aligned}
 	\right.
 \end{align}

 Moreover, by Theorem \ref{apu20720}, we get
 \begin{align*}
 	\left\{
 	\begin{aligned}
 		 |u^{\varepsilon}(x)-|x|^{\frac{2k-n}{k}}|\le& C,\\
 	C^{-1}|x|^{-\frac{n-k}{k}}	|Du^{\varepsilon}|(x)\le& C|x|^{-\frac{n-k}{k}},\\
 		|D^2u^{\varepsilon}|(x)\le& C|x|^{-\frac{n}{k}},
 	\end{aligned}
 	\right.
 \end{align*}
 Thus there exits a subsequence $u^{\epsilon_i}$ converges to $u$ in $C^{1,\alpha}_{loc}$ such that $u\in C^{1,1}(\Omega^c)$ is the $k$-convex solution of the k-Hessian equation \eqref{case2Equa1.2} and satisfies the estimates \eqref{decay20720}.

 \textbf{Case 3: $k=\frac{n}{2}$}\\
 For any fixed sufficiently small $\varepsilon>0$,
 by Guan \cite{guan1994cpde}, $|u^{\varepsilon, R}|_{C^m(\Omega_{K_0})}\le C(\epsilon, K_0,m)$ for any $K_0>R_0$  and  $m\ge 0$ . Then there exists a subsequence $u^{\varepsilon, R_i}$ converging smoothly to a strictly $k$-convex $u^\varepsilon$ in $K$ and $u^{\varepsilon}\in C^{\infty}(\Omega^c)$ satisfies
 \begin{align}
 	\left\{\begin{aligned}
 		S_k(D^2 u^\varepsilon)=f^{3,\varepsilon}\qquad\text{in}\ \ \Omega^c,\\
 		u^\varepsilon=0, \qquad\text{on}\ \ \p\Omega.
 	\end{aligned}
 	\right.
 \end{align}

 Moreover, by Theorem \ref{apu30720}, we get
 \begin{align*}
 	\left\{
 	\begin{aligned}
 		|u^{\varepsilon}(x)-\log|x||\le& C,\\
 		C^{-1}|x|^{-1}\le|Du^{\varepsilon}|(x)\le& C|x|^{-1},\\
 		|D^2u^{\varepsilon}|(x)\le& C|x|^{-2},
 	\end{aligned}
 	\right.
 \end{align*}
 Thus there exits a subsequence $u^{\epsilon_i}$ converges to $u$ in $C^{1,\alpha}_{loc}$ such that $u\in C^{1,1}(\Omega^c)$ is the $k$-convex solution of the k-Hessian equation \eqref{case3Equa1.3} and satisfies the estimates \eqref{decay30720}.

\section{Almost monotonicity formula along the level set of the approximating solution}
 Agostiniani-Mazzieri \cite{AM2020CVPDE} proved an monotonicity formula along the level set of the solution of the following problem
 \begin{align}
 	\left\{
 	\begin{aligned}
 		\Delta u=0 \ \text{in} \ \Omega^c\\
 		u=-1  \ \text{on} \ \p\Omega\\
 		\lim\limits_{|x|\rightarrow\infty}u(x) =0.
 		\end{aligned}
 	\right.
 	\end{align}
In our setting, note that $u$ is only $C^{1,1}$, we consider similar quantity on the level set of $u^{\varepsilon}$ since $u^{\varepsilon}$ is smooth and $|Du^{\varepsilon}|\equiv |x|^{1-\frac{n}{k}}$.

Firstly, as an application of the $C^0$ estimates of $u^{\epsilon}$, we prove the following property.
 \begin{lemma}
 	Assume $k<\frac{n}{2}$.
 	\begin{align}\label{07230}
 	\lim_{\epsilon\rightarrow 0}\int_{R^n\backslash  \bar \Omega}{S_k^{ij}(D^2u^{\epsilon})u^{\epsilon}_iu^{\epsilon}_j}=
 		&\int_{\partial\Omega}{|Du|^{k}H_{k-1}(\kappa)dA},
 	\end{align}
 \end{lemma}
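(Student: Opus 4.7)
The plan rests on the divergence-free identity $\partial_j S_k^{ij}(D^2 w)\equiv 0$ for $C^3$ functions $w$; together with the Euler relation $S_k^{ij}w_{ij}=kS_k$, it produces the divergence form
\begin{align*}
\partial_j\bigl(w\, S_k^{ij}(D^2 w)\, w_i\bigr)=S_k^{ij}(D^2 w)\, w_i w_j+k\, w\, S_k(D^2 w).
\end{align*}
Applying this to $u^{\varepsilon}$ on the annular region $U_R:=B_R\setminus\overline{\Omega}$, using the approximating equation $S_k(D^2 u^{\varepsilon})=f^{1,\varepsilon}$ and the boundary datum $u^{\varepsilon}=-1$ on $\partial\Omega$, the divergence theorem produces
\begin{align*}
\int_{U_R} S_k^{ij}(D^2 u^{\varepsilon})\, u^{\varepsilon}_i u^{\varepsilon}_j\, dx
=\int_{\partial B_R} u^{\varepsilon} S_k^{ij} u^{\varepsilon}_i\frac{x_j}{|x|}\, dA
+\int_{\partial\Omega} S_k^{ij}(D^2 u^{\varepsilon})\, u^{\varepsilon}_i (\nu_\Omega)_j\, dA
-k\int_{U_R} u^{\varepsilon} f^{1,\varepsilon}\, dx,
\end{align*}
where $\nu_\Omega$ denotes the outer unit normal to $\Omega$ (so that the outward normal of $U_R$ on $\partial\Omega$ is $-\nu_\Omega$).

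Next I would pass to $R\to\infty$ uniformly in $\varepsilon$, and then send $\varepsilon\to 0$. By the uniform decay estimates of Theorem~\ref{apu10720}, on $\partial B_R$ one has
\begin{align*}
|u^{\varepsilon}|\,|Du^{\varepsilon}|\,|S_k^{ij}(D^2u^{\varepsilon})|\lesssim R^{-\frac{n-2k}{k}}\, R^{-\frac{n-k}{k}}\, R^{-\frac{n(k-1)}{k}},
\end{align*}
so that integration over $\partial B_R$ gives a quantity of order $R^{(2k-n)/k}$, which tends to $0$ since $k<\frac{n}{2}$. For the bulk error term, $f^{1,\varepsilon}$ concentrates at the origin, which lies strictly inside $\Omega$; thus on $\Omega^c$ we have $|x|\geq r_0$ and $f^{1,\varepsilon}\lesssim \varepsilon^2|x|^{-n-2}$. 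Combining with the uniform $C^0$ decay $|u^{\varepsilon}|\leq C|x|^{-(n-2k)/k}$ yields
\begin{align*}
\Big|\int_{\Omega^c} u^{\varepsilon}\, f^{1,\varepsilon}\, dx\Big|\leq C\varepsilon^2\int_{|x|\geq r_0}|x|^{-\frac{n-2k}{k}-n-2}\,dx\longrightarrow 0.
\end{align*}

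For the surviving boundary integral I would invoke Lemma~\ref{07261}. The uniform positive lower bound $|Du^{\varepsilon}|\geq c_0>0$ on $\partial\Omega$ together with $u^{\varepsilon}\equiv -1$ there forces $Du^{\varepsilon}=|Du^{\varepsilon}|\nu_\Omega$, hence $(\nu_\Omega)_j=u^{\varepsilon}_j/|Du^{\varepsilon}|$. Applying Lemma~\ref{07261} with $m=k$ on the level set $\{u^{\varepsilon}=-1\}=\partial\Omega$ then gives
\begin{align*}
S_k^{ij}(D^2 u^{\varepsilon})\, u^{\varepsilon}_i(\nu_\Omega)_j\Big|_{\partial\Omega}
=\frac{S_k^{ij}(D^2u^\varepsilon)\, u^{\varepsilon}_i u^{\varepsilon}_j}{|Du^{\varepsilon}|}
=H_{k-1}(\kappa)\, |Du^{\varepsilon}|^{k},
\end{align*}
where $\kappa$ is the principal curvature of $\partial\Omega$, an intrinsic geometric quantity independent of $\varepsilon$. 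Since $u^{\varepsilon}\to u$ in $C^{1,\alpha}_{\mathrm{loc}}$, $|Du^{\varepsilon}|^{k}\to|Du|^{k}$ uniformly on $\partial\Omega$, and \eqref{07230} follows upon passing to the limit.

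The main obstacle I anticipate lies in justifying the double limit: the vanishing of the $\partial B_R$ integral must be uniform in $\varepsilon$ so that $R\to\infty$ can legitimately be taken before $\varepsilon\to 0$. This is precisely why the $\varepsilon$-uniform $C^2$ decay of Theorem~\ref{apu10720} is indispensable—mere decay for the limit function $u$ would not suffice. Once the uniform decay is in hand, every step above is a routine consequence of the divergence structure of the Newton tensor and the level-set formula for $H_{k-1}$.
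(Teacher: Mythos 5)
Your proof is correct, and the divergence identity you use,
\begin{align*}
\partial_j\bigl(u^{\varepsilon}S_k^{ij}(D^2u^{\varepsilon})u^{\varepsilon}_i\bigr)
= S_k^{ij}u^{\varepsilon}_iu^{\varepsilon}_j + k\,u^{\varepsilon}\,S_k(D^2u^{\varepsilon}),
\end{align*}
is a perfectly valid basis for the computation; your handling of the $\partial B_R$ boundary term (exponent $(2k-n)/k<0$) and of the bulk error $k\int u^{\varepsilon}f^{1,\varepsilon}=O(\varepsilon^2)$ are both sound, as is the identification of the $\partial\Omega$ integrand via $(\nu_\Omega)_j = u^{\varepsilon}_j/|Du^{\varepsilon}|$ and Lemma~\ref{07261}. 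The paper takes a slightly different route: instead of applying the divergence theorem once to the vector field $u^{\varepsilon}S_k^{ij}u^{\varepsilon}_i$ over $B_R\setminus\overline\Omega$ and then sending $R\to\infty$, it first proves a flux identity
\begin{align*}
\int_{S_t}S_k^{ij}u^{\varepsilon}_j\nu_i\,dA
= \int_{\partial\Omega}S_k^{ij}u^{\varepsilon}_j\nu_i\,dA + O(\varepsilon^2)
\end{align*}
for each level set $S_t=\{u^{\varepsilon}=t\}$, $t\in[-1,0)$, using only the divergence-free property $D_jS_k^{ij}=0$ and the Euler relation $S_k^{ij}u^{\varepsilon}_{ij}=kS_k$, and then converts the bulk integral to $\int_{-1}^0\int_{S_t}\cdots\,dA\,dt$ by the coarea formula (legitimate because the gradient has a uniform positive lower bound and $u^{\varepsilon}$ ranges over exactly $[-1,0)$). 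The two arguments are essentially dual to each other: the paper's flux-plus-coarea decomposition avoids any direct estimate on $\partial B_R$ (the decay is absorbed in the convergence of the $t$-integral to $0$), whereas your single divergence identity is slightly more compact but requires you to verify, as you do, that the outer boundary term tends to zero uniformly in $\varepsilon$. The paper's flux identity is also reused in the monotonicity-formula arguments of Section~6, which is probably why the authors structure it that way.
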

\begin{remark}
	We may call $\int_{\partial\Omega}{|Du|^{k}S_{k-1}(\kappa)dA}$ as the $k$-Capacity of $\Omega$, since
	when $k=1$, $Cap(\Omega)=\int_{\partial\Omega}{|Du|}dA$. The left hand side may be $\infty$ when $k\ge \frac{n}{2}$.
	\end{remark}
 \begin{proof}

 Let $\Omega_t:= \{x\in \Omega^c:u^{\varepsilon}<t\}$ and $S_t=\{x\in\Omega^c: u^{\varepsilon}(x)=t\}$ with $t\in [-1, 0)$.
%

 Since $|Du^{\varepsilon}|>0$, the level set $S_t:= \{u^{\varepsilon}=t\}$ with $t\in [-1, 0)$ is a smooth closed hypersurface. Let $\nu$ be the outward unit normal vector of $S_t$ and then  we  have $\nu=\frac{Du}{|Du|}$.
 	By the divergence free property of the k-Hessian operator: $D_i S^{ij}=0$, we have,
 	\begin{align}\label{omegat0721}
 		&\int_{\partial \Omega_t}{S_k^{ij}(D^2u^{\varepsilon})u^{\varepsilon}_j\nu_{i}}-\int_{\partial \Omega}{S_k^{ij}(D^2u^{\varepsilon})u^{\varepsilon}_j\nu_{i}}\notag\\
 		&=\int_{\Omega_t\backslash\bar\Omega}{D_i \left(S_k^{ij}(D^2u^{\varepsilon})u^{\varepsilon}_j\right)}dx\notag=
 		\int_{\Omega_t\backslash\bar\Omega}{
 			S_k^{ij}(D^2u^{\varepsilon})u^{\varepsilon}_{ij}} dx\\
 		&=\int_{S_t\backslash\bar\Omega}k
 		{S_k(D^2u^{\varepsilon})}dx=kc_{n,k}\varepsilon^2\int_{\Omega_t\backslash\bar\Omega}{\left(|x|^2+\varepsilon^2\right)^{-\frac{n}{2}-1}}.
 	\end{align}

 Since $C^{-1}|x|^{-\frac{n-2k}{k}}\le |u^{\varepsilon}|\le C |x|^{-\frac{n-2k}{k}}$ in $ \Omega^c$, for any $t\in[-1, 0)$,
 \begin{align}
 	B_{C^{-1}t^{-\frac{k}{n-2k}}}\subset \Omega_t\subset B_{Ct^{-\frac{k}{n-2k}}},
 \end{align}
 then  we have
 \begin{align}
 	\left|\int_{ S_t}{S_k^{ij}(D^2u)u^{\varepsilon}_j\nu_{i}}-\int_{\partial \Omega}{S_k^{ij}(D^2u^{\varepsilon})u^{\varepsilon}_j\nu_{i}}\right|\le  C\varepsilon^2(r_0^{-2}-|t|^{\frac{2k}{n-2k}}).
 	\end{align}
 	Thus  for any $t\in [-1, 0)$, we have
 	\begin{align}\label{equality1}
 		\int_{S_t}{S_k^{ij}(D^2u^{\varepsilon})u^{\varepsilon}_j\nu_{i}}=\int_{\partial \Omega}{S_k^{ij}(D^2u^{\varepsilon})u^{\varepsilon}_j\nu_{i}}	
 		+O(\varepsilon^2)(r_0^{-2}-|t|^{\frac{2k}{n-2k}}).
 	\end{align}
 	Then by the coarea formula, we have
 	\begin{align}
 		&\int_{R^n\backslash  \bar \Omega}{S_k^{ij}(D^2u^{\varepsilon})u^{\varepsilon}_iu^{\varepsilon}_j} \notag\\
 		=&\int_{-1}^0\int_{S_t}{S_k^{ij}(D^2u^{\varepsilon})u^{\varepsilon}_j\frac{u^{\varepsilon}_i}{|Du^{\epsilon}|}dA(t)} dt\ \ \text{\emph{\text{(Coarea Formula)}}}\notag \\
 		=&\int_{\{u=-1\}=\partial\Omega}{S_k^{ij}(D^2u^{\varepsilon})u^{\varepsilon}_j\frac{u^{\varepsilon}_i}{|Du^{\varepsilon}|}dA} +O(\varepsilon^2)(\emph{\textbf{\text{By} \eqref{equality1}}})\notag\\
 		=&\int_{\partial\Omega}{|Du^{\varepsilon}|^{k}H_{k-1}(\kappa)dA}+O(\varepsilon^2),
 	\end{align}
 where we use $H_{k-1}(\kappa)=|Du^{\varepsilon}|^{-k-1}S_k^{ij}(D^2 u^{\varepsilon})u^{\varepsilon}_iu^{\varepsilon}_j$.

 	
 	Let $\varepsilon$ tend to $0$ and note that $|Du^{\varepsilon}|$ tends to $|Du|$, we have
 	\begin{align*}
 		\lim_{\varepsilon\rightarrow 0}\int_{R^n\backslash  \bar \Omega}{S_k^{ij}(D^2u^{\varepsilon})u^{\varepsilon}_iu^{\epsilon}_j}=\int_{\partial\Omega}{|Du|^{k}H_{k-1}(\kappa)dA}.
 	\end{align*}
  \end{proof}

 By the uniform $C^2$ estimates and  positive lower bound  of $u^{\epsilon}$, we can estimate $|S_t|$, where $S_{t}=\{x\in \mathbb R^n\setminus \Omega: u(x)=t\}$.

 \begin{lemma}\label{st0723}
There exits uniform constant $C$ such that
 	\begin{align}
 		|S_t|	\le&
 		\left\{\begin{aligned} C|t|^{-\frac{k(n-1)}{n-2k}}\  \ \ \text{for any }\ 	\ t\in (0,-1]\ \text{if} \ k<\frac{n}{2},\\
 			C|t|^{\frac{k(n-1)}{2k-n}} \ \ \text{for any }\ 	 t\in [1,\infty)\ \text{if} \ k>\frac{n}{2},\\
 			 Ce^{(n-1)t}\ \text{for any }\ t\in [0,\infty)\ \text{if} \ k=\frac{n}{2}.
 		\end{aligned}
 		\right.
 		\end{align}
 	\end{lemma}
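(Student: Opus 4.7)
The plan is to parametrize each level set $S_t$ as a radial graph over $S^{n-1}$, and then read off its area from the standard graph formula, using the $C^0$ estimate to control the supremum of the radial function and the gradient estimates to control its angular derivative.

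First, I would show that $S_t$ is star-shaped with respect to the origin. The gradient lower bound \eqref{0714gl1} gives $x\cdot Du(x)\ge c_0|x|^{2-n/k}>0$ when $k\neq n/2$, and $x\cdot Du(x)\ge c_0>0$ when $k=n/2$. Since the outward unit normal to $S_t$ is $\nu = Du/|Du|$ and $|Du|>0$ throughout $\Omega^c$, this says $x\cdot\nu>0$ on $S_t$. Together with the fact that $S_t$ is a smooth compact hypersurface (a consequence of $|Du|>0$ and the $C^0$ bound forcing $S_t$ to sit in a compact annulus), this yields a smooth parametrization $S_t=\{r(\theta)\theta:\theta\in S^{n-1}\}$ with $r:S^{n-1}\to(0,\infty)$.

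Next I would bound $r$ pointwise using the $C^0$ estimate. In Case~1, $-u(x)\ge c|x|^{-(n-2k)/k}$ together with $u(r(\theta)\theta)=t$ forces $r(\theta)\le C|t|^{-k/(n-2k)}$; in Cases~2 and~3 the analogous arguments give $r(\theta)\le C|t|^{k/(2k-n)}$ and $r(\theta)\le Ce^t$ respectively. To bound $|\nabla_\theta r|/r$, differentiate the identity $u(r(\theta)\theta)=t$ in tangential directions $\theta_\alpha$ on $S^{n-1}$ to obtain $r_\alpha=-r\,(Du\cdot\theta_\alpha)/(Du\cdot\theta)$. The gradient upper bound $|Du|\le C|x|^{1-n/k}$ controls the numerator, while $Du\cdot\theta=(Du\cdot x)/r\ge c|x|^{1-n/k}$ (from \eqref{0714gl1}) controls the denominator, yielding $|\nabla_\theta r|/r\le C$ uniformly; the same manipulation with the rate $|x|^{-1}$ handles $k=n/2$.

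The conclusion follows from the area formula
\[
|S_t|=\int_{S^{n-1}}r(\theta)^{n-1}\sqrt{1+|\nabla_\theta r(\theta)|^2/r(\theta)^2}\,d\sigma(\theta)\le C\,r_{\max}^{n-1},
\]
into which one inserts the case-dependent upper bound on $r$. There is no substantive obstacle: every ingredient has already been proved in Section~4 with constants uniform in the approximation parameter $\varepsilon$, so the inequality holds for $u^\varepsilon$ and passes to the $C^{1,1}$ limit $u$. The only care needed is in matching the exponents across the three regimes, which is routine algebra.
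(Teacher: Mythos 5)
Your argument is correct but takes a genuinely different route from the paper. The paper's proof is a one-liner built on the $C^{2}$ estimate: it writes $|S_t|-|\partial\Omega|=\int_{\Omega_t}\operatorname{div}\bigl(Du^{\varepsilon}/|Du^{\varepsilon}|\bigr)\,dx$ by the divergence theorem, uses $|D^2u^{\varepsilon}|\le C|x|^{-n/k}$ together with the lower bound $|Du^{\varepsilon}|\ge c|x|^{1-n/k}$ to control the mean-curvature integrand by $C|x|^{-1}$, and then integrates over the ball $B_{R(t)}$ that contains $\Omega_t$, with $R(t)$ read off the $C^{0}$ estimate. Your radial-graph argument bypasses the $C^{2}$ estimate entirely: star-shapedness from $x\cdot Du^{\varepsilon}>0$, the $C^{0}$ estimate for $r_{\max}$, the two-sided gradient bounds for $|\nabla_\theta r|/r$, and the graph area formula. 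That is more economical in its use of a priori information, and it produces the same exponents; both approaches ultimately reduce to ``$|S_t|\lesssim R(t)^{n-1}$.'' The one slip is in the direction of the $C^{0}$ inequality you invoke for the upper bound on $r$: from $u(r\theta)=t$ and the \emph{lower} bound $-u\ge c|x|^{-(n-2k)/k}$ you only get $r\ge(c/|t|)^{k/(n-2k)}$; to get $r(\theta)\le C|t|^{-k/(n-2k)}$ you need the \emph{upper} bound $-u\le C|x|^{-(n-2k)/k}$. Both bounds are proved in Theorem \ref{apu10720}, so the fix is a one-word change, and the same care applies to the $k>\tfrac n2$ and $k=\tfrac n2$ cases.
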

 	 \begin{proof}
 	 	Since
	\begin{align*}
	 |S_t|-|\partial \Omega|=\int_{\Omega_t}\mathrm{div}\Big(\frac{u^{\varepsilon}_i}{|Du^{\varepsilon}|}\Big)dx
 \end{align*}
By the uniform $C^2$-estimates and the uniform lower bound of $u^{\varepsilon}$, we finish the proof.
 \end{proof}

We define the following quantity

\begin{align}
	I_{a, b, k}(t):=
	\int_{S_t}
	g^a(u^{\varepsilon})|Du^{\varepsilon}|^{b-k}S_k^{ij}(D^2u^{\varepsilon})u^{\varepsilon}_iu^{\varepsilon}_j,
\end{align}
where $g(u)$ s defined by
\vspace*{-5pt}
\begin{align}
	g(u)=\left\{
	\begin{aligned}
		&(-u)^{\frac{n-k}{2k-n}}, k<\frac{n}{2},\\
		&u^{\frac{n-k}{2k-n}}, \ \quad k>\frac{n}{2},\\
	&{e}^{u},\ \qquad k=\frac{n}{2}.
		\end{aligned}
	\right.
	\end{align}
We choose $a=b-k+1$ and one can see that $I_{a,b,k}(t)$ is uniformly bounded from the $C^2$ estimates of $u^{\varepsilon}$ and the lower bound of $|Du^{\varepsilon}|$.\\
When $k=1$ and $a=b$, $I_{a,b,k}(t)$ is exactly the one in \cite{AM2020CVPDE}. \\
We define
\begin{align}	J_{a+a_0,b,k}(t,t_0):=-g^{a_0}(t) I'_{a,b,k}(t)+g^{a_0}(t_0)  I'_{a,b,k}(t_0)
	\end{align}.
We prove the following useful inequalities along the level set of $u^{\varepsilon}$.
\begin{lemma}\label{0725mono1} Let $u^{\varepsilon}$ be the solution of the approximating k-Hessian equation with $a=b-k+1$.
	We have the following inequalities

		\begin{align}
		J_{a+a_0,b,k}(t,t_0)
		\ge& -ba\int_{t}^{t_0}\int_{S_{s}}
	g^{a+a_0}|Du^{\varepsilon}|^{b-k-1}\frac{H_k}{H_{k-1}}S_kdAds-(b+1)\int_{S_t}\Big(g^{a+a_0}|Du^{\varepsilon}|^{b-k}S_k\Big)dA\notag\\
		&+a\int_{t}^{t_0}\int_{S_{s}}g^{a+a_0}|Du^{\varepsilon}|^{b-1}H_{k-1}^{-1}\Big(c_{n,k}H_{k}^2-(k+1)H_{k-1}H_{k+1}\Big)dAds\notag\\
		&+a\int_{t}^{t_0}\int_{S_{s}}
		g^{a+a_0}
		|Du^{\varepsilon}|^{b-1}\mathcal{L}.
		\end{align}
	where $a_0, b, c_{n,k}=\frac{k(n-k-1)}{n-k}$ and the functions $\mathcal{L}$ are choosing as follows
\begin{enumerate}[{(i)} ]
		\item	If $1\le k<\frac{n}{2}$, we require  $-1\le t<t_0<0$, $a_0=-2\frac{n-2k}{n-k}, b\ge c_{n,k}$ and
		$\mathcal{L}=(b-c_{n,k})\Big(
		\frac{n-k}{n-2k}|D\log u^{\varepsilon}|-\frac{H_k}{H_{k-1}}\Big)^2$
	\end{enumerate}
		
\begin{enumerate}[{(ii)} ]
	\item	If $k=\frac{n}{2}$, we require  $0\le t<t_0<\infty$, $a_0=0$ $b\ge \frac{n}{2}-1$ and  $\mathcal{L}=a\Big(
	|D u^{\varepsilon}|-\frac{H_k}{H_{k-1}}\Big)^2$.
\end{enumerate}
\begin{enumerate}[{(iii)} ]
	\item	If $n>k>\frac{n}{2}$, we require  $1\le t<t_0<\infty$ and $a_0=2\frac{2k-n}{n-k}$, $b\ge k-1$ and $\mathcal{L}=(b-c_{n,k})\Big(
	\frac{n-k}{n-2k}|D\log u^{\varepsilon}|-\frac{H_k}{H_{k-1}}\Big)^2$.
\end{enumerate}

	\end{lemma}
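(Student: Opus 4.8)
## Proof proposal for Lemma \ref{0725mono1}

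\textbf{The plan is to differentiate $I_{a,b,k}(t)$ along the level sets and convert the resulting boundary integral into a second-order quantity that can be controlled termwise.} First I would use the coarea formula to write $I'_{a,b,k}(t)$ as a surface integral over $S_t$. On $S_t$ the normal is $\nu = Du^\varepsilon/|Du^\varepsilon|$ and, by Lemma \ref{07261}, $S_k^{ij}u^\varepsilon_i u^\varepsilon_j = |Du^\varepsilon|^{k+1}H_{k-1}$, so the integrand of $I_{a,b,k}$ is $g^a|Du^\varepsilon|^{b+1}H_{k-1}$. Differentiating in $t$ produces three kinds of terms: one from $\tfrac{d}{dt}g^a(u^\varepsilon) = a g^{a-1}g' /|Du^\varepsilon|$ hitting the height dependence, one from the variation of the area element (which brings in the mean-curvature-type factor $H_1$ of $S_t$, hence via the level-set formulas the quantity $H_k/H_{k-1}$), and one from $\tfrac{d}{dt}$ of $|Du^\varepsilon|^{b+1}H_{k-1}$ along $\nu$. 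The divergence-structure identity $D_i S_k^{ij}=0$ used as in the previous lemma (equation \eqref{omegat0721}) is what lets me integrate the $S_k$-weighted terms cleanly, turning the $\int_{S_t}$ derivative into a combination of $\int_{S_s}$ integrals over $s\in(t,t_0)$ plus the boundary terms in the statement — this accounts for the $-ba\int\int g^{a+a_0}|Du^\varepsilon|^{b-k-1}(H_k/H_{k-1})S_k$ term and the $-(b+1)\int_{S_t}g^{a+a_0}|Du^\varepsilon|^{b-k}S_k$ term, both of which carry the inhomogeneity $S_k = f^{i,\varepsilon}$ and will later be shown to be $O(\varepsilon^2)$.

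\textbf{The heart of the argument is the pointwise algebraic inequality on $S_t$} that produces the term $c_{n,k}H_k^2 - (k+1)H_{k-1}H_{k+1}$ together with the nonnegative remainder $\mathcal{L}$. After carrying out the differentiation I would be left, on each level set, with a quadratic expression in the tangential gradient of $u^\varepsilon$ and the curvature quantities $H_{k-1}, H_k, H_{k+1}$; here I would invoke the gradient-estimate computation from Section 4 (the same manipulation using $\lambda_1 = -\tfrac{g'}{2}u_1^2$, the splitting $S_{k+1}(\lambda) = \lambda_1 S_k(\lambda|1) + S_{k+1}(\lambda|1)$, and the Maclaurin inequality $S_{k+1}(\lambda|1)S_{k-1}(\lambda|1)\le \tfrac{C_{n-1}^{k+1}C_{n-1}^{k-1}}{(C_{n-1}^k)^2}S_k(\lambda|1)^2$) translated into the language of the second fundamental form of $S_t$. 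The precise choice $a = b-k+1$ and the case-dependent $a_0$ are exactly what makes the coefficient of the "bad" cross term match, so that the completion-of-square producing $\mathcal{L} = (b - c_{n,k})\big(\tfrac{n-k}{n-2k}|D\log u^\varepsilon| - \tfrac{H_k}{H_{k-1}}\big)^2$ (resp. the $k=\tfrac n2$ variant) works and the leftover coefficient $c_{n,k} = \tfrac{k(n-k-1)}{n-k}$ emerges; the constraints $b \ge c_{n,k}$ (resp. $b\ge \tfrac n2 -1$, $b\ge k-1$) are precisely the sign conditions needed for $\mathcal{L}\ge 0$ and for the Maclaurin step to go the right way.

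\textbf{The main obstacle is bookkeeping the curvature identities.} Converting $\tfrac{d}{dt}(|Du^\varepsilon|^{b+1}H_{k-1})$ along the normal into intrinsic/extrinsic quantities of $S_t$ requires the structure equations relating $u^\varepsilon_{ij}$ restricted to $S_t$ to the second fundamental form and $|Du^\varepsilon|$, and one must track how $S_k^{ij}$ acts on the normal-normal versus tangential directions (this is where the $(k+1)S_{k+1}(\lambda|1)$ and $S_1(\lambda|1)$ pieces separate). I would handle this by working at a point where $D^2u^\varepsilon$ restricted to the tangent space of $S_t$ is diagonalized and the normal direction is a principal one — which is legitimate since $|Du^\varepsilon|>0$ — so that all the sums over eigenvalues reduce to the $S_m(\lambda|1)$ expressions already appearing in Section 4. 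Once the pointwise inequality holds on $S_t$, integrating in $t$ over $(t,t_0)$ and collecting the $g^{a_0}(t)I'_{a,b,k}$ endpoint contributions into $J_{a+a_0,b,k}(t,t_0)$ gives the stated inequality; the two "error" terms with $S_k$ are kept explicit here because they will be absorbed as $O(\varepsilon^2)$ when we pass to the monotonicity formula and finally to Theorem \ref{geometric0725}.
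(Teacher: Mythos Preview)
Your overall architecture---compute $I'_{a,b,k}$ via the divergence theorem and the divergence-free property $D_jS_k^{ij}=0$, iterate once more to produce $J$, then complete the square to extract $\mathcal{L}$---matches the paper's. The paper does this by applying the divergence theorem to the vector field $g^a|Du|^{b+1-k}S_k^{ij}u_i$ on $\Omega_{t_0}\setminus\Omega_t$ and then once more to $g^{a+a_0-1}g'|Du|^{b-k+1}S_k^{ij}u_i$, converting via coarea each time; this is cleaner than the first-variation-of-area picture you sketch, but amounts to the same thing.

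There is, however, a real gap in your pointwise step. You propose to work ``at a point where $D^2u^\varepsilon$ restricted to the tangent space of $S_t$ is diagonalized and the normal direction is a principal one,'' so that everything reduces to $S_m(\lambda|1)$. But on a generic level set the gradient direction is \emph{not} an eigendirection of $D^2u^\varepsilon$: one may only arrange $u_n=|Du|$ and $\{u_{\alpha\beta}\}_{\alpha,\beta<n}$ diagonal, while the mixed entries $u_{n\alpha}$ remain. The paper keeps these terms and eliminates them by a Newton inequality: writing
\[
S_{k+1}=u_{nn}S_k(\tilde\lambda)+S_{k+1}(\tilde\lambda)-\sum_{i<n}S_{k-1}(\tilde\lambda|i)u_{ni}^2,
\qquad
S_{k}=u_{nn}S_{k-1}(\tilde\lambda)+S_{k}(\tilde\lambda)-\sum_{i<n}S_{k-2}(\tilde\lambda|i)u_{ni}^2,
\]
eliminating $u_{nn}$, and then using $S_k(\tilde\lambda|i)S_{k-2}(\tilde\lambda|i)\le S_{k-1}^2(\tilde\lambda|i)$ to drop the $u_{ni}^2$ contribution with the correct sign. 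This is the step that produces the clean $c_{n,k}H_k^2-(k+1)H_{k-1}H_{k+1}$ term; without it your reduction to $S_m(\lambda|1)$ is unjustified.

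Relatedly, your appeal to the Section~4 manipulation ``$\lambda_1=-\tfrac{g'}{2}u_1^2$'' is misplaced: that identity holds only at the interior maximum of the auxiliary function $G=\log|Du|^2+g(u)$, not at an arbitrary point of $S_t$. The algebraic role it plays there (making the normal eigenvalue a specific multiple of $|Du|^2$) has no analogue here; what survives in the level-set computation is only the decomposition of $S_{k+1}$ above together with Newton's inequality. Once you make that correction, the completion-of-square you describe with the stated $a_0$ and $a=b-k+1$ goes through exactly as in the paper.
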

\begin{proof}
		\textbf{For simplicity, we use $u$ instead of $u^{\varepsilon}$ and $S_k$ intead of $S_k(D^2u^{\varepsilon})$ during the proof.}
		
	By  the divergence theorem and the divergence free property of the $k$-Hessian operator i.e. $\sum\limits_{j=1}^nD_jS_k^{ij}=0$, we have
	\begin{align}
		I_{a,b,k}(t_0)-I_{a,b,k}(t)=&\int\limits_{\Omega_{t_0}\setminus\Omega_t}D_j\left(g^a|Du|^{b+1-k}S_{k}^{ij}u_i\right)\notag\\
		=&a\int_{\Omega_{t_0}\setminus\overline\Omega}g^{a-1}g'|Du|^{b+1-k}S_k^{ij}u_iu_j\notag\\
		&+(b+1-k)\int_{\Omega_{t_0}\setminus\overline\Omega}g^{a}|Du|^{b-k-1}S_k^{ij}u_iu_lu_{ij}+k\int_{\Omega_{t_0}\setminus\overline\Omega}g^a|Du|^{b+1-k}S_k\notag\\
		=&a\int_{\Omega_{t_0}\setminus\overline\Omega}g^{a-1}g'|Du|^{b+1-k}S_k^{ij}u_iu_j\notag\\
		&-(b+1-k)\int_{\Omega_{t_0}\setminus\overline\Omega}g^{a}|Du|^{b-k-1}S_{k+1}^{ij}u_iu_{j}+(b+1)\int_{\Omega_{t_0}\setminus\overline\Omega}g^a|Du|^{b+1-k}S_k\notag\\
		=&a\int_{t}^{t_0}\int_{S_{s}}g^{a-1}g'|Du|^{b-k}S_k^{ij}u_iu_j-(b+1-k)\int_{t}^{t_0}\int_{S_{s}}g^a|Du|^{b-k-2}S_{k+1}^{ij}u_iu_{j}\notag\\
				&+(b+1)\int_{t}^{t_0}\int_{S_s}g^{a}|Du|^{b-k}S_k,\label{0726It}
	\end{align}
where we use $S_k^{ij}u_iu_lu_{lj}=|Du|^2S_k-S_{k+1}^{ij}u_iu_{j}$ and the coarea formula.

Then
	
	The derivative of $I_{a,b, k}(t)$ is
	\begin{equation}\label{0726I}
		\begin{aligned}
			I'_{a,b,k}(t)=&a\int_{S_t}g^{a-1}g'|Du|^{b-k}S_k^{ij}u_iu_j\\
			&-(b+1-k)\int_{S_t}g^{a}|Du|^{b-k-2}S_{k+1}^{ij}u_iu_{j}+E_{a,b,k}(t),
		\end{aligned}
	\end{equation}
where $E_{a,b,k}(t)=(b+1)\int_{S_t}g^{a}|Du|^{b-k}S_k$.

Then we have
\begin{align}\label{10724}
	&J_{a,b,k}(t,t_0):=-g^{a_0}(t) I'_{a,b,k}(t)+g^{a_0}(t_0) I'_{a,b,k}(t_0)\notag\\
	=&a\int_{\Omega_{t_0}\setminus\Omega_t}D_j\Big(g^{a+a_0-1}g'|Du|^{b-k+1}S_k^{ij}u_i\Big)\notag\\
	&-(b-k+1)(I_{a+a_0,b-1,k+1}(t_0)-I_{a+a_0,b-1,k+1}(t))+E_{a+a_0,b,k}(t_0)-E_{a+a_0,b,k}(t)
	\end{align}
Firstly we have
\begin{align}\label{30724}	
&\int_{\Omega_{t_0}\setminus\overline\Omega_t}D_j\Big(g^{a+a_0-1}g'|Du|^{b-k+1}S_k^{ij}u_i\Big)dx\notag\\
=&	\int_{t}^{t_0}\int_{S_s}\Big((g^{a+a_0-1}g'\Big)'|Du|^{b-k}S_k^{ij}u_iu_j dAds
\notag\\
&+(b-k+1)\int_{t}^{t_0}\int_{S_s}\Big(g^{a+a_0-1}g'|Du|^{b-k-2}S_k^{ij}u_iu_lu_{lj}\Big)dAds+k\int_{t}^{t_0}\int_{S_s}\Big(g^{a+a_0-1}g'|Du|^{b-k}S_k\Big)dAds\notag\\
=&\int_{t}^{t_0}\int_{S_s}\Big((g^{a+a_0-1}g')'|Du|^{b-k}S_k^{ij}u_iu_j\Big)dAds\notag\\
&-(b-k+1)\int_{t}^{t_0}\int_{S_s}\Big(g^{a+a_0-1}g'|Du|^{b-k-2}S_{k+1}^{ij}u_iu_j\Big)dAds+(b+1)\int_{t}^{t_0}\int_{S_s}\Big(g^{a+a_0-1}g'|Du|^{b-k}S_k\Big)dAds\notag\\
=&\int_{t}^{t_0}\int_{S_s}\Big((g^{a+a_0-1}g')'|Du|^{b+1}H_{k-1}\Big)dAds
-(b-k+1)\int_{t}^{t_0}\int_{S_s}\Big(g^{a+a_0-1}g'|Du|^{b}H_{k}\Big)dAds\notag\\
&+(b+1)\int_{t}^{t_0}\int_{S_s}\Big(g^{a+a_0-1}g'|Du|^{b-k}S_k\Big)dAds,
	\end{align}
where we use the identity $H_{m-1}|Du|^{m+1}={S_{m}^{ij}}{u^{i}u^{j}}$ for $m\in \{1,2,\cdots, n\}$(see Lemma \ref{07261}).

For the term $I_{a+a_0,b-1,k+1}(t_0)-I_{a+a_0,b-1,k+1}(t)$, similar as the calculation of \eqref{0726It}, we have
\begin{align}\label{20724}
	I_{a+a_0,b-1,k+1}(t_0)-&I_{a+a_0,b-1,k+1}(t)\notag\\
	=&(a+a_0)\int_{t}^{t_0}\int_{S_{s}}g^{a+a_0-1}g'|Du|^{b-k-2}S_{k+1}^{ij}u_iu_j dAds\notag\\
	&-(b-1-k)\int_{t}^{t_0}\int_{S_{s}}g^{a+a_0}|Du|^{b-k-4}S_{k+2}^{ij}u_iu_{j}\notag\\
	&+b\int_{t}^{t_0}\int_{S_s}g^{a+a_0}|Du|^{b-k-2}S_{k+1}.
	\end{align}
Next we deal with the term involving $S_{k+1}$. Choose the coordinate such that   $u_n(x_0)=|Du|(x_0)$ and $\{u_{ij}(x_0))\}_{1\le i,j \le n-1}=\{\tilde\lambda_i\delta_{ij}\}_{1\le i,j \le n-1}$ is diagonal, we have
\begin{align*}
	S_{k+1}=&u_{nn}S_{k}(\tilde\lambda)+S_{k+1}(\tilde\lambda)-\sum\limits_{i=1}^{n-1}S_{k-1}(\tilde\lambda|i)u_{ni}^2\\
	S_{k}=&u_{nn}S_{k-1}(\tilde\lambda)+S_{k}(\tilde\lambda)-\sum\limits_{i=1}^{n-1}S_{k-2}(\tilde\lambda|i)u_{ni}^2,
	\end{align*}
where $\tilde\lambda=(\tilde\lambda_1,\cdots,\tilde\lambda_{n-1})$ and  recall we use the notation $S_k=S_k(D^2 u)$. Then we get
\begin{align}\label{unn0724}
	S_{k+1}=&\frac{S_k(\widetilde\lambda)}{S_{k-1}(\widetilde\lambda)}S_k-\frac{S_k^2(\widetilde\lambda)}{S_{k-1}(\tilde\lambda)}+\sum_{i=1}^{n-1}u_{ni}^2\frac{S_{k}(\tilde \lambda|i)S_{k-2}(\tilde \lambda|i)-S_{k-1}^2(\tilde\lambda|i)}{S_{k-1}(\tilde \lambda)}+S_{k+1}(\tilde\lambda)\notag\\
	\le& \frac{S_k(\tilde\lambda)}{S_{k-1}(\widetilde\lambda)}S_k-\frac{S_k^2(\widetilde\lambda)}{S_{k-1}(\tilde\lambda)}+S_{k+1}(\tilde\lambda),
	\end{align}
where we use the  Newton's inequality (one can see the proof in \cite{chenkhessian}).\\
Inserting \eqref{unn0724} into \eqref{20724} and noting that $S_{m}(\tilde \lambda)=|Du|^{-2}S_{m+1}^{ij}u_iu_j=H_{m}|Du|^{m}$ is a global defined function, then we have
\begin{align}\label{20724}
	I_{a+a_0,b-1,k+1}(t_0)&-I_{a+a_0,b-1,k+1}(t)\notag\\ \le&(a+a_0)\int_{t}^{t_0}\int_{S_{s}}g^{a+a_0-1}g'|Du|^{b}H_k dAds\notag\\
	&+(k+1)\int_{t}^{t_0}\int_{S_{s}}g^{a+a_0}|Du|^{b-1}H_{k+1}dAds\notag\\
	&-b\int_{t}^{t_0}\int_{S_s}g^{a+a_0}|Du|^{b-1}\frac{H_k^2}{H_{k-1}}dAds
	+b\int_{t}^{t_0}\int_{S_s}g^{a+a_0}|Du|^{b-k-1}\frac{H_k}{H_{k-1}}S_kdAds.
\end{align}
Inserting \eqref{30724} and \eqref{20724} into \eqref{10724}, if $a=b-k+1\ge 0$, we obtain
\begin{align}
	J_{a+a_0,b,k}(t,t_0)\ge& -ba\int_{t}^{t_0}\int_{S_{s}}
	g^{a+a_0}|Du|^{b-k-1}\frac{H_k}{H_{k-1}}S_kdAds-(b+1)\int_{S_t}\Big(g^{a+a_0}|Du|^{b-k}S_k\Big)dA\notag\\
	&+a\int_{t}^{t_0}\int_{S_{s}}g^{a+a_0}|Du|^{b-1}H_{k-1}^{-1}\Big(c_{n,k}H_{k}^2-(k+1)H_{k-1}H_{k+1}\Big)dAds\notag\\
	&+a\int_{t}^{t_0}\int_{S_{s}}
	g^{a+a_0}
	|Du|^{b-1}H_{k-1} \mathcal{L}dAds,
	\end{align}
where the function $\mathcal{L}$ is defined by
\begin{align}\label{0723}
\mathcal{L}=&(b-c_{n,k})\Big(
\frac{H_k}{H_{k-1}}\Big)^2-(2a+a_0) (\log g)'|Du|\frac{H_k}{H_{k-1}}\notag\\
&+\Big((\log g)''+(a+a_0)((\log g)')^2\Big)|D u|^2.
\end{align}
Now we divide two cases to prove the $\mathcal{L}\ge 0$ under some restrictions on $a$ and $b$.

\textbf{Case1: $k<\frac{n}{2}$ and $\frac{n}{2}<k<n$.}\\
We choose $c_{n,k}=\frac{k(n-k-1)}{n-k}$.\\
Note that $\log g=\frac{n-k}{2k-n}\log(-u)$, then \begin{align}
	(\log g)''+(a+a_0)((\log g)')^2=&\frac{n-k}{n-2k}u^{-2}+(a+a_0)(\frac{n-k}{n-2k})^2u^{-2}\notag\\
=&(\frac{n-k}{n-2k})^2u^{-2}(\frac{n-2k}{n-k}+a+a_0)\notag\\
=&(b-c_{n,k})(\frac{n-k}{n-2k})^2u^{-2},
\end{align}
where we choose $a_0=-2\frac{n-2k}{n-k}$ and we use  $a=b-k+1$. We also have
\begin{align}
	-(2a+a_0)(\log g)'=2\frac{n-k}{n-2k}(b-c_{n,k})u^{-1}.
	\end{align}

 By direct manipulation, we have

\begin{align}
\mathcal{L}
	=&(b-c_{n,k})\Big(
	\frac{n-k}{n-2k}|D\log u|-\frac{H_k}{H_{k-1}}\Big)^2.
	\end{align}
Consequently,  we obtain
\begin{align}
	J_{a+a_0,b,k}(t,t_0)\ge&
	-ba\int_{t}^{t_0}\int_{S_{s}}
	g^{a+a_0}|Du|^{b-k-1}\frac{H_k}{H_{k-1}}S_kdAds-\int_{S_t}\Big(g^{a+a_0}|Du|^{b-k}S_k\Big)dA\notag\\
	&+a\int_{t}^{t_0}\int_{S_{s}}g^{a+a_0}|Du|^{b-1}H_{k-1}^{-1}\Big(c_{n,k}H_{k}^2-(k+1)H_{k-1}H_{k+1}\Big)dAds\notag\\
	&+a(b-c_{n,k})\int_{t}^{t_0}\int_{S_{s}}
	g^{a+a_0}
	|Du|^{b-1}\Big(
	\frac{n-k}{n-2k}|D\log u|-\frac{H_k}{H_{k-1}}\Big)^2
	\end{align}

\textbf{Case 2: $k=\frac{n}{2}$.}\\
We have $c_{n,k}=\frac{n}{2}-1>0$. We require $b\ge \frac{n}{2}-1$,  $a=b-\frac{n}{2}+1=b-c_{n,k}\ge 0$ and $a_0=0$.\\
Since $g=e^{u}$ and thus $(a+a_0)^{-1}(g^{a+a_0})''=(a+a_0)g^{a+a_0}$. We obtain
\begin{align}
	\mathcal{L}=a\Big(
	|D u|-\frac{H_k}{H_{k-1}}\Big)^2.
	\end{align}


%
	\end{proof}
From the above formula, we have the following almost monotonicity formula along the level set of $u^{\varepsilon}$ and we prove the first part of Theorem \ref{geometric0725}.

 \begin{lemma}\label{0727G1}
 	Let $u^{\varepsilon}$ be the solution of the approximating k-Hessian equation.
 	Assume $k<\frac{n}{2}$
 	and $b\ge \frac{k(n-k-1)}{n-k}$, then for any $t\in[-1,0)$, we have
 	\begin{align}
 		\frac{d}{dt}  I_{a,b,k}(t)\le C\varepsilon^2|t|^{\frac{2k}{n-2k}-1}.
 	\end{align}
Consequently, for any $-1\le t\le s<0$,
\begin{align}
	I_{a,b,k}(s)-	I_{a,b,k}(t)\le C\varepsilon^2.
	\end{align}
 	In particular, we have the following weighted inequality
 	\begin{align}\label{0723geometric}
 		\int_{\p\Omega}{|Du|^{b+1}H_{k-1}}\le \frac{n-2k}{n-k}\int_{\p\Omega}{|Du|^{b}H_{k}},
 	\end{align}
 where $u$ is the unique $C^{1,1}$ solution of the homogeneous $k$-Hessian equation  \eqref{case1Equa1.1}.
 \end{lemma}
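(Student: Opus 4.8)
\textbf{Proof proposal for Lemma \ref{0727G1}.}

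The plan is to combine the almost-monotonicity inequality of Lemma \ref{0725mono1}(i) with the algebraic fact that the right-hand side there is, up to an $O(\varepsilon^2)$ error coming from the inhomogeneity $f^{1,\varepsilon}$, sign-definite. First I would specialize Lemma \ref{0725mono1} to Case (i): $1\le k<\frac n2$, $-1\le t<t_0<0$, $a_0=-2\frac{n-2k}{n-k}$, $b\ge c_{n,k}:=\frac{k(n-k-1)}{n-k}$, $a=b-k+1\ge 0$, and $\mathcal L=(b-c_{n,k})\big(\frac{n-k}{n-2k}|D\log u^{\varepsilon}|-\frac{H_k}{H_{k-1}}\big)^2\ge 0$. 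The $\mathcal L$-term is then manifestly nonnegative; the term $a\int\!\!\int g^{a+a_0}|Du^\varepsilon|^{b-1}H_{k-1}^{-1}\big(c_{n,k}H_k^2-(k+1)H_{k-1}H_{k+1}\big)$ is nonnegative by the Maclaurin/Newton inequality applied to the principal curvatures of the level set $S_s$ (which is $(k-1)$-convex, indeed strictly $k$-convex, by Lemma \ref{07261} since $u^\varepsilon$ is strictly $k$-convex): indeed $\frac{H_{k+1}}{H_k}\le\frac{H_k}{H_{k-1}}\cdot\frac{C_{n-1}^{k+1}C_{n-1}^{k-1}}{(C_{n-1}^k)^2}$, and one checks $(k+1)\frac{C_{n-1}^{k+1}C_{n-1}^{k-1}}{(C_{n-1}^k)^2}\le c_{n,k}=\frac{k(n-k-1)}{n-k}$, so $c_{n,k}H_k^2-(k+1)H_{k-1}H_{k+1}\ge 0$. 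This leaves only the two terms that are genuinely controlled by the right-hand side $S_k(D^2u^\varepsilon)=f^{1,\varepsilon}$, namely $-ba\int_t^{t_0}\!\!\int_{S_s} g^{a+a_0}|Du^\varepsilon|^{b-k-1}\frac{H_k}{H_{k-1}}S_k\,dAds$ and $-(b+1)\int_{S_t}g^{a+a_0}|Du^\varepsilon|^{b-k}S_k\,dA$.

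Next I would estimate these two error terms. Using $S_k(D^2u^\varepsilon)=f^{1,\varepsilon}=c_{n,k}\varepsilon^2(R_0^2+\varepsilon^2)^{\frac{n-2k}{2}}(|x|^2+\varepsilon^2)^{-\frac n2-1}$, together with the uniform bounds from Theorem \ref{apu10720} — $C^{-1}|x|^{-\frac{n-2k}{k}}\le -u^\varepsilon\le C|x|^{-\frac{n-2k}{k}}$, $C^{-1}|x|^{-\frac{n-k}{k}}\le |Du^\varepsilon|\le C|x|^{-\frac{n-k}{k}}$, $|D^2u^\varepsilon|\le C|x|^{-\frac nk}$ (hence $H_k/H_{k-1}\le C|x|^{-1}$ on $S_s$) — and the area bound $|S_s|\le C|s|^{-\frac{k(n-1)}{n-2k}}$ from Lemma \ref{st0723}, one sees that on the level set $S_s=\{u^\varepsilon=s\}$ we have $|x|\sim |s|^{-\frac{k}{n-2k}}$, so $g^{a+a_0}|Du^\varepsilon|^{b-k}S_k$ and $g^{a+a_0}|Du^\varepsilon|^{b-k-1}\frac{H_k}{H_{k-1}}S_k$ are each $\le C\varepsilon^2|x|^{-2}\cdot(\text{powers of }|x|)$, and integrating in $s$ over $[t,t_0]\subset[-1,0)$ the total contribution is $O(\varepsilon^2)$; more precisely, unwinding the powers gives $\frac{d}{dt}I_{a,b,k}(t)\le C\varepsilon^2|t|^{\frac{2k}{n-2k}-1}$, which is exactly the divergence computation already done in \eqref{omegat0721}–\eqref{equality1} for the $k$-capacity lemma. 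Integrating this differential inequality from $t$ to $s$ and noting $\int_{-1}^0|\tau|^{\frac{2k}{n-2k}-1}d\tau<\infty$ yields $I_{a,b,k}(s)-I_{a,b,k}(t)\le C\varepsilon^2$ for all $-1\le t\le s<0$.

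Finally I would pass to the limit to extract the geometric inequality. Taking $t=-1$, so that $S_{-1}=\partial\Omega$ (since $u^\varepsilon=-1$ on $\partial\Omega$), the definition of $I_{a,b,k}$ with $g(u)=(-u)^{\frac{n-k}{2k-n}}$, $a=b-k+1$, $a_0=-2\frac{n-2k}{n-k}$ (so $g^{a+a_0}(-1)=1$) and the level-set formula $H_{k-1}=|Du|^{-k-1}S_k^{ij}u_iu_j$ from Lemma \ref{07261} give $I_{a,b,k}(-1)=\int_{\partial\Omega}|Du^\varepsilon|^{b+1}H_{k-1}\,dA$. On the other hand, $I'_{a,b,k}(t)$ evaluated via \eqref{0726I} involves $\int_{S_t}|Du|^{b}H_k$ and, at $t=-1$, the term $-(b+1-k)\int_{S_{-1}}g^a|Du|^{b-k-2}S_{k+1}^{ij}u_iu_j$; combining $I_{a,b,k}(s)-I_{a,b,k}(-1)\le C\varepsilon^2$, letting $s\uparrow 0$ and using the uniform decay $I_{a,b,k}(s)\to$ a finite limit (bounded by Theorem \ref{apu10720} and Lemma \ref{st0723}), and then letting $\varepsilon\to 0$ so that $u^\varepsilon\to u$ in $C^{1,\alpha}_{loc}$ and $|Du^\varepsilon|\to|Du|$, the error $O(\varepsilon^2)\to 0$ and one is left with $\int_{\partial\Omega}|Du|^{b+1}H_{k-1}\le\frac{n-2k}{n-k}\int_{\partial\Omega}|Du|^{b}H_k$, where the constant $\frac{n-2k}{n-k}$ emerges from balancing the $g^{a-1}g'$ weight against the weight in $I'$. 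The main obstacle I anticipate is bookkeeping the exact exponents so that all the boundary and bulk terms have the claimed powers of $|x|$ and so that the constant comes out precisely $\frac{n-2k}{n-k}$ rather than something weaker; the positivity of the curvature terms (Maclaurin/Newton) and the sign of $\mathcal L$ are essentially free once the setup of Lemma \ref{0725mono1}(i) is in place.
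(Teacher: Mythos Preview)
Your proposal is essentially correct and follows the same route as the paper: apply Lemma \ref{0725mono1}(i), drop the two nonnegative bulk terms (the $\mathcal L$-term and the Newton--Maclaurin term $c_{n,k}H_k^2-(k+1)H_{k-1}H_{k+1}\ge 0$), and bound the two remaining $S_k$-terms by $C\varepsilon^2$ using the uniform $C^2$ estimates, the lower bound on $|Du^\varepsilon|$, and Lemma \ref{st0723}.

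Two clarifications will sharpen the write-up. First, to pass from the inequality for $J_{a+a_0,b,k}(t,t_0)=-t^2I'_{a,b,k}(t)+t_0^2I'_{a,b,k}(t_0)$ to the pointwise bound $I'_{a,b,k}(t)\le C\varepsilon^2|t|^{\frac{2k}{n-2k}-1}$, you must send $t_0\uparrow 0$ and use that $t_0^2|I'_{a,b,k}(t_0)|\le C|t_0|\to 0$; this follows from \eqref{0726I} together with the uniform $C^2$ estimates and the area bound, and is worth stating explicitly. Second, your final paragraph mixes two different endgames. The clean route (and the one in the paper) does \emph{not} use the integrated inequality $I_{a,b,k}(s)-I_{a,b,k}(-1)\le C\varepsilon^2$ or any limit $s\uparrow 0$; rather, one takes $t=-1$ in the derivative bound to get $I'_{a,b,k}(-1)\le C\varepsilon^2$, and then reads off from \eqref{0726I} (noting $g(-1)=1$, $g^{a-1}g'(-1)=\frac{n-k}{n-2k}$, $a=b-k+1$, and $E_{a,b,k}(-1)\ge 0$) the lower bound
\[
I'_{a,b,k}(-1)\ \ge\ a\,\frac{n-k}{n-2k}\int_{\p\Omega}|Du^\varepsilon|^{b+1}H_{k-1}\ -\ a\int_{\p\Omega}|Du^\varepsilon|^{b}H_k.
\]
Combining the two and letting $\varepsilon\to 0$ yields \eqref{0723geometric} directly, with the constant $\frac{n-2k}{n-k}$ coming from $g^{a-1}g'(-1)$.
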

\begin{remark}
	When $k=1$, \eqref{0723geometric} was proved by Agostiniani- Mazzieri \cite{AM2020CVPDE}.
	\end{remark}
\begin{proof}
By the Lemma \ref{0725mono1}, for any $-1\le t<t_0<0$, we have
\begin{align}
	-t^2  I'_{a,b,k}(t)&+t_0^2I'_{a,b,k}(t_0)\notag\\
	\ge&
	-ab\int_{\Omega_{t_0}\setminus\overline\Omega_t}(-u^{\varepsilon})^{a\frac{n-k}{2k-n}+2}|Du^{\varepsilon}|^{a-1}\frac{H_k}{H_{k-1}}S_k\notag\\
	&-(b+1)  \int_{S_{t}}(-u^{\varepsilon})^{a\frac{n-k}{2k-n}+2}|Du^{\varepsilon}|^{a-1}S_k.
	\end{align}
By the MacLaurin  inequality: $\frac{H_k}{H_{k-1}}\le \frac{C_{n-1}^{k}}{C_{n-1}^{k-1}}\Big(\frac{H_{k-1}}{C_{n-1}^{k-1}}\Big)^{\frac{1}{k-1}}$ and the uniform $C^2$-estimates of $u^\varepsilon$ (we also use  $|Du^{\varepsilon}|\ge c|x|^{1-\frac{n}{k}}$), for any  $x\in \Omega_t^{c}$, we have
\begin{align*}
	(-u^{\varepsilon})^{a\frac{n-k}{2k-n}+2}|Du^{\varepsilon}|^{a-1}\frac{H_k}{H_{k-1}}S_k\le& C(-u^{\varepsilon})^{a\frac{n-k}{2k-n}+2}|Du^{\varepsilon}|^{a-1}H_{k-1}^{\frac{1}{k-1}}|x|^{-n-2}\notag\\
	\le& C|x|^{a\frac{n-k}{k}+2\frac{2k-n}{k}}|x|^{(a-1)\frac{k-n}{k}}|x|^{-1}|x|^{-n-2}\notag\\
	=&C|x|^{-n-\frac{n}{k}},
	\end{align*}
then
 \begin{align*}
	\int_{\Omega_{t_0}\setminus\overline\Omega_t}(-u^{\varepsilon})^{a\frac{n-k}{2k-n}+2}|Du^{\varepsilon}|^{a-2}\frac{H_k}{H_{k-1}}S_k\le C\varepsilon^2|t|^{\frac{n}{n-2k}}.
\end{align*}
Similarly, we have
\begin{align*}
	\int_{S_{t}}(-u^{\varepsilon})^{a\frac{n-k}{2k-n}+2}|Du^{\varepsilon}|^{a-1}S_k\le C\varepsilon^2|t|^{\frac{n}{n-2k}},
	\end{align*}
where we use $|S_t|\le C|t|^{^{-\frac{k(n-1)}{n-2k}}}$ (see Lemma \ref{st0723}).

Thus we get
\begin{align}\label{110725}
	-t^2I'_{a,b,k}(t)&+t_0^2 I'_{a,b,k}(t_0)
\ge-C\varepsilon^2|t|^{\frac{n}{n-2k}}.
	\end{align}
 By the uniform $C^{2}$ estimates for $u^{\varepsilon}$ and  $|Du^{\varepsilon}|\ge c|x|^{1-\frac{n}{k}}$,
we have for any $t_0\in[-1,0)$
\begin{align}
	t_0^2 |I'_{a,b,k}(t_0)|\Big|\le C|t_0|.
	\end{align}

Let $t_0$ tend to $0$ in \eqref{110725}, we have
\begin{align}
	  I'_{a,b,k}(t)\le C\varepsilon^2|t|^{\frac{2k}{n-2k}-1}.
	\end{align}
In particular,  taking $t=-1$, we have
\begin{align}
 	 I'_{a,b,k}(-1)\le C\varepsilon^2.
	\end{align}
On the other hand, by \eqref{0726I}, we have
\begin{align}
	 I_{a,b,k}(-1)
\ge a\frac{n-k}{n-2k}\int_{\p\Omega}{|Du^{\varepsilon}|^{b+1}H_{k-1}}- a\int_{\p\Omega}{|Du^{\varepsilon}|^{b}H_{k}}.
\end{align}
Consequently, we get
\begin{align}\label{0723final}
\frac{n-k}{n-2k}\int_{\p\Omega}{|Du^\varepsilon|^{b+1}H_{k-1}}- \int_{\p\Omega}{|Du^\varepsilon|^{b}H_{k}}\le C\varepsilon^2
\end{align}
Since $|Du^{\varepsilon}|$ converges to $|Du|$ on $\p \Omega$, we finish the proof of \eqref{0723geometric} by taking $\varepsilon\rightarrow 0$ in \eqref{0723final}.
	\end{proof}

Next we prove the second part  of Theorem \ref{geometric0725}.
\begin{lemma}\label{0727G2}
	Assume $k=\frac{n}{2}$ and $b>\frac{n}{2}-1$. We have
	\begin{align}
	I'_{a, b, k}(t)\le C\varepsilon^2 e^{-2t},
	\end{align}
In particular, we have
\begin{align}
	\int_{\p\Omega}|Du|^{b+1}H_{k-1}
	\le \int_{\p\Omega}|Du|^{b}H_{k},
\end{align}
where $u$ is the unique $C^{1,1}$ solution the homogeneous $k$-Hessian equation  \eqref{case3Equa1.3}.
	\end{lemma}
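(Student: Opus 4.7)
My plan is to mimic the proof of Lemma~\ref{0727G1} specialized to $k=\tfrac{n}{2}$, where $g(u)=e^{u}$, $a_0=0$, and $a:=b-k+1=b-\tfrac{n}{2}+1>0$ by the strict hypothesis $b>\tfrac n2-1$. Since $g^{a_0}\equiv1$, we have $J_{a,b,k}(t,t_0)=-I'_{a,b,k}(t)+I'_{a,b,k}(t_0)$ without any rescaling prefactor, so the limiting endpoint analysis will differ from the $k<\tfrac n2$ case, where a vanishing prefactor $t_0^{2}$ took care of the endpoint automatically.

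First I apply Lemma~\ref{0725mono1}(ii) on $[t,t_0]\subset[0,\infty)$ and discard the two nonnegative integrals (the Newton--Maclaurin term $c_{n,k}H_k^{2}-(k+1)H_{k-1}H_{k+1}\ge0$ and the square $a(|Du^{\varepsilon}|-H_k/H_{k-1})^{2}$), obtaining
\[
-I'_{a,b,k}(t)+I'_{a,b,k}(t_0)\ge -ba\int_t^{t_0}\!\!\int_{S_s}e^{as}|Du^{\varepsilon}|^{b-k-1}\tfrac{H_k}{H_{k-1}}f^{3,\varepsilon}\,dAds-(b+1)\int_{S_t}e^{at}|Du^{\varepsilon}|^{b-k}f^{3,\varepsilon}\,dA.
\]
I then estimate both $f^{3,\varepsilon}$-error integrands exactly as in the proof of Lemma~\ref{0727G1}: the uniform $C^{2}$ decay of Theorem~\ref{apu30720} gives $|Du^{\varepsilon}|\asymp|x|^{-1}$ and $|D^{2}u^{\varepsilon}|\le C|x|^{-2}$, whence $H_k/H_{k-1}\le C|x|^{-1}$ by Maclaurin; combined with $f^{3,\varepsilon}\le C\varepsilon^{2}|x|^{-n-2}$, the equivalence $|x|\asymp e^{s}$ on $S_s$ from $|u^{\varepsilon}-\log|x||\le C$, and the area bound $|S_s|\le Ce^{(n-1)s}$ of Lemma~\ref{st0723}, substitution of $a=b-k+1$ and $k=\tfrac n2$ makes every exponent cancel to $-2$, producing pointwise integrands of order $\varepsilon^{2}e^{-2s}$ on $S_s$. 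Integration in $s$ yields
\[
I'_{a,b,k}(t)\le I'_{a,b,k}(t_0)+C\varepsilon^{2}e^{-2t}\qquad\text{for all }0\le t<t_0<\infty.
\]

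The crucial new ingredient is the passage $t_0\to\infty$. By the same power counting applied to $I_{a,b,k}(t)=\int_{S_t}e^{at}|Du^{\varepsilon}|^{b+1}H_{k-1}\,dA$, the pointwise integrand is controlled by $e^{(a-b-k)t}=e^{(1-2k)t}=e^{(1-n)t}$, so integration over $S_t$ with area $\le Ce^{(n-1)t}$ gives the uniform bound $0\le I_{a,b,k}(t)\le C$ on $[0,\infty)$; this global boundedness is the signature of the critical exponent $n-2k=0$ at $k=\tfrac n2$. If $\liminf_{t_0\to\infty}I'_{a,b,k}(t_0)>0$ then eventually $I'_{a,b,k}\ge c>0$, forcing $I_{a,b,k}\to+\infty$, contradicting uniform boundedness. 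Hence $\liminf_{t_0\to\infty}I'_{a,b,k}(t_0)\le0$, and passing to the limit yields the first assertion $I'_{a,b,k}(t)\le C\varepsilon^{2}e^{-2t}$.

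Finally, at $t=0$ we have $S_0=\partial\Omega$, and \eqref{0726I} with $a_0=0$, $g=e^{u}$, and $a=b+1-k$ gives
\[
I'_{a,b,k}(0)=a\Bigl[\int_{\partial\Omega}|Du^{\varepsilon}|^{b+1}H_{k-1}\,dA-\int_{\partial\Omega}|Du^{\varepsilon}|^{b}H_k\,dA\Bigr]+(b+1)\int_{\partial\Omega}|Du^{\varepsilon}|^{b-k}f^{3,\varepsilon}\,dA,
\]
where the last term is $O(\varepsilon^{2})$. Combining with $I'_{a,b,k}(0)\le C\varepsilon^{2}$, dividing by $a>0$, and sending $\varepsilon\to0$ (using uniform convergence of $|Du^{\varepsilon}|$ on $\partial\Omega$ and smoothness of the boundary curvatures $H_{k-1},H_k$) produces the desired weighted inequality. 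The main conceptual obstacle, compared with the case $k<\tfrac n2$, is precisely this endpoint step: there the rescaling prefactor $g^{a_0}=t^{2}$ vanished at $t_0\to0^{-}$ and absorbed the $I'(t_0)$ term, whereas here one must instead invoke the uniform boundedness of $I_{a,b,k}$ itself, which is a direct consequence of the critical scaling $n=2k$.
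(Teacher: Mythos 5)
Your proposal is correct and follows essentially the same route as the paper: apply Lemma \ref{0725mono1}(ii), estimate the $f^{3,\varepsilon}$-error terms by $C\varepsilon^2 e^{-2t}$ via the uniform decay estimates and the area bound of Lemma \ref{st0723}, exploit the uniform boundedness of $I_{a,b,k}$ to send $t_0\to\infty$, and evaluate at $t=0$ before letting $\varepsilon\to 0$. The only presentational difference is that you dispose of $I'_{a,b,k}(t_0)$ via a $\liminf$/contradiction argument, whereas the paper integrates $I'_{a,b,k}(s)\ge I'_{a,b,k}(t)-C\varepsilon^2 e^{-2t}$ over $[t,t_0]$ and divides by $t_0$; both steps rest on exactly the same uniform bound for $I_{a,b,k}$.
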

\begin{proof}
By Lemma \ref{0725mono1} and similar as the proof in the above lemma, for any $0\le t<t_0$, we have
\begin{align*}
I'_{a,b,k}(t_0)\ge I'_{a,b,k}(t)- C\varepsilon^2e^{-2t}.
\end{align*}
 By integrating the above form $t$ to $t_0$, we have
\begin{align*}
	I_{a,b,k}(t_0)-I_{a,b,k}(t)\ge
	(I'_{a,b,k}(t)- C\varepsilon^2e^{-2t}) (t_0-t),
	\end{align*}
Since $I_{a,b,k}(t)$ is uniformly bounded which follows from the $C^2$-estimates of $u^{\varepsilon}$ and $|Du^{\varepsilon}|\ge c |x|^{1-\frac{n}{k}}$, we have
\begin{align*}
	\Big(I'_{a,b,k}(t)- C\varepsilon^2e^{-2t}\Big)(1-tt_0^{-1})\le t_0^{-1}(I_{a,b,k}(t_0)-I_{a,b,k}(t))\le Ct_0^{-1}.
	\end{align*}
Let $t_0$ tend to $0$,  we obtain
\begin{align*}
	I'_{a,b,k}(t)\le C\varepsilon^2e^{-2t}.
	\end{align*}
On the other hand, we have\begin{align*}
	I'_{a,b,k}(0)\ge& a\int_{\p\Omega}|Du^{\varepsilon}|^{b+1}H_{k-1}
	-a\int_{\p\Omega}|Du^{\varepsilon}|^{b}H_{k}.
\end{align*}
Combining the above two inequalities and noting that $|Du^{\varepsilon}|\rightarrow |Du|$,
we get
\begin{align}
	\int_{\p\Omega}|Du|^{b+1}H_{k-1}
	\le \int_{\p\Omega}|Du|^{b}H_{k}
	\end{align}
	\end{proof}

When $\frac{n}{2}<k<n$, we have the following inequality.

\begin{lemma}
	Let $u^{\varepsilon}$ be the solution of the approximating k-Hessian equation.
	Assume $k>\frac{n}{2}$,
	and $b\ge -k+1$, then for any $1\le t\le t_0<\infty$, we have
	\begin{align}
		t^2I'_{a,b,k}(t)  -t_0^2I'_{a,b,k}(t_0)\le C\varepsilon^2|t|^{\frac{2k}{n-2k}-1}.
	\end{align}
	
\end{lemma}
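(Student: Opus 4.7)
The plan is to follow the template of the proofs of Lemma~\ref{0727G1} and Lemma~\ref{0727G2}, taking as the starting point case (iii) of the almost monotonicity formula in Lemma~\ref{0725mono1}. The key algebraic observation is that with the specific choice $a_0 = 2(2k-n)/(n-k)$ one has $g^{a_0}(u^\varepsilon) = (u^\varepsilon)^2$, so the quantity $J_{a+a_0,b,k}(t,t_0)$ from that lemma equals precisely $-t^2 I'_{a,b,k}(t) + t_0^2 I'_{a,b,k}(t_0)$. Case (iii) of Lemma~\ref{0725mono1} therefore bounds $t^2 I'_{a,b,k}(t) - t_0^2 I'_{a,b,k}(t_0)$ from above by two ``error'' terms---both of them carrying a factor of $S_k(D^2u^\varepsilon)=f^{2,\varepsilon}$---once the nonnegative Maclaurin-type term (nonnegative by the Maclaurin inequality $(k+1)H_{k-1}H_{k+1}\le c_{n,k}H_k^2$, which reduces to $c_{n,k}=k(n-k-1)/(n-k)$) and the nonnegative $\mathcal{L}$-term are dropped.

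Next, I would bound each error term pointwise. The ingredients are: (a) $f^{2,\varepsilon}\le C\varepsilon^{2}|x|^{-n-2}$; (b) the Maclaurin inequality $H_k/H_{k-1}\le C\,H_{k-1}^{1/(k-1)}$ combined with the curvature bound $\kappa\le C|x|^{-1}$ coming from the $C^2$ decay estimates of Theorem~\ref{apu20720}, which together give $H_k/H_{k-1}\le C|x|^{-1}$; and (c) the decay $u^\varepsilon\sim |x|^{(2k-n)/k}$ and $c|x|^{-(n-k)/k}\le |Du^\varepsilon|\le C|x|^{-(n-k)/k}$ from the same theorem. On the level set $S_s$ one has $|x|\sim s^{k/(2k-n)}$ and $|S_s|\le C s^{k(n-1)/(2k-n)}$ by Lemma~\ref{st0723}; combined with the identity $a=b-k+1$ (built into the definition of $I_{a,b,k}$), these bounds cause the various powers of $g$, $|Du^\varepsilon|$, $H_k/H_{k-1}$, $f^{2,\varepsilon}$, and $|S_s|$ to telescope, so that each inner integral $\int_{S_s}[\cdots]\,dA$ is controlled by $C\varepsilon^2$ times a sufficiently negative power of $s$.

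Finally, integrating in $s\in[t,t_0]$ for the first error term and combining with the second gives the asserted bound $C\varepsilon^2|t|^{\frac{2k}{n-2k}-1}$ (noting $n-2k<0$, so this is indeed a decay factor as $t\to\infty$). The main technical obstacle is the exponent bookkeeping: the precise cancellation producing the exponent $\frac{2k}{n-2k}-1$ depends on the weight $g^{a+a_0}$ being designed so that the various powers combine exactly as needed, mirroring the cancellation that appears in the proof of Lemma~\ref{0727G1}. A secondary subtlety, relevant if one wants to pass from the stated inequality to a sharp pointwise decay of $I'_{a,b,k}$ itself, is that here $t_0\to\infty$ rather than $t_0\to 0$; one then exploits the uniform boundedness of $I_{a,b,k}(t_0)$---which follows from the uniform $C^2$ estimates together with the positive lower bound $|Du^\varepsilon|\ge c|x|^{(k-n)/k}$---in the spirit of the proof of Lemma~\ref{0727G2}.
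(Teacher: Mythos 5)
Your approach is the right one and is essentially what the paper's (omitted) proof would be: apply case (iii) of Lemma~\ref{0725mono1}, observe that with $a_0=\frac{2(2k-n)}{n-k}$ one has $g^{a_0}(u^\varepsilon)=(u^\varepsilon)^2$, so that $-J_{a+a_0,b,k}(t,t_0)=t^2I'_{a,b,k}(t)-t_0^2I'_{a,b,k}(t_0)$; discard the two nonnegative terms (the Newton--Maclaurin term and the $\mathcal{L}$-term), and estimate the remaining two error integrals, which both carry a factor $S_k(D^2u^\varepsilon)=f^{2,\varepsilon}$, using the $C^2$ decay from Theorem~\ref{apu20720}, the lower bound on $|Du^\varepsilon|$, and Lemma~\ref{st0723}. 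This is exactly the template of Lemma~\ref{0727G1}.

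However, the ``exponent bookkeeping'' that you flag as the main technical obstacle but do not carry out is precisely where the claim in your last paragraph breaks down, and it is worth being explicit. Using $a=b-k+1$, $u^\varepsilon\sim|x|^{\frac{2k-n}{k}}$, $|Du^\varepsilon|\sim|x|^{\frac{k-n}{k}}$, $H_k/H_{k-1}\lesssim|x|^{-1}$, $f^{2,\varepsilon}\lesssim\varepsilon^2|x|^{-n-2}$, $|S_s|\lesssim|x|^{n-1}$ and $|x|\sim s^{\frac{k}{2k-n}}$, one finds that \emph{both} error terms are of order $\varepsilon^2|t|^{\frac{n}{n-2k}}$ (equivalently $\varepsilon^2 t^{-\frac{n}{2k-n}}$), not $\varepsilon^2|t|^{\frac{2k}{n-2k}-1}$. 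The two exponents differ by exactly $2$, namely $\frac{2k}{n-2k}-1=\frac{n}{n-2k}-2$. In the proof of Lemma~\ref{0727G1} the exponent $\frac{2k}{n-2k}-1$ appears only \emph{after} dividing $t^2I'_{a,b,k}(t)\le C\varepsilon^2|t|^{\frac{n}{n-2k}}$ by $t^2$, which in turn required sending $t_0\to 0$ and invoking $t_0^2|I'_{a,b,k}(t_0)|\le C|t_0|\to 0$. Here, with $k>\frac{n}{2}$, the corresponding limit is $t_0\to\infty$ and there is no analogous reason for $t_0^2I'_{a,b,k}(t_0)$ to vanish, which is presumably why the lemma keeps both boundary terms on the left. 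So the estimate you get is $t^2I'_{a,b,k}(t)-t_0^2I'_{a,b,k}(t_0)\le C\varepsilon^2|t|^{\frac{n}{n-2k}}$; the exponent in the paper's statement appears to be a misprint carried over from Lemma~\ref{0727G1}. Your sketch asserts that the cancellation ``gives the asserted bound''; carrying it out shows otherwise, and you should flag the mismatch rather than take it on faith.

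A minor additional point: the hypothesis $b\ge -k+1$ in the lemma is almost certainly a misprint for $b\ge k-1$, which is what case (iii) of Lemma~\ref{0725mono1} requires; that condition guarantees both $a=b-k+1\ge 0$ (needed to drop the nonnegative terms with the correct sign) and $\mathcal{L}\ge 0$, since $k-1\ge c_{n,k}=\frac{k(n-k-1)}{n-k}$ precisely when $k\ge\frac{n}{2}$. Since you invoke case (iii) directly, you are implicitly using this stronger hypothesis.
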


{\bf Acknowledgements:}
The first author was supported by  National Natural Science Foundation of China (grants 11721101 and 12141105) and National Key Research and Development Project (grants SQ2020YFA070080). The second author is supported by NSFC grant  No. 11901102.

\bibliographystyle{plain}
\bibliography{Exterior-Homogeneous-k-Hessian-Ma-Zhang}

\end{document}